\numberwithin{equation}{section}
\newtheorem{theorem}{Theorem}[section]
\newtheorem{lemma}[theorem]{Lemma}
\newtheorem{remark}[theorem]{Remark}
\newtheorem{proposition}[theorem]{Proposition}
\newtheorem{corollary}[theorem]{Corollary}
\begin{document}
%\textheight 21  true cm \textwidth=14.5cm

%%%%%%%%%%%%%%%%%%%%%%%%%% DEF %%%%%%%%%%%%%%%%%%%%%%%%%%%%%%%%%

%\def\esssup{\mathop{\rm ess\, sup}}
%%%%%%%%%%%%%%%%%%%%%% Body of article %%%%%%%%%%%%%%%%%%%%%%%%%%%

%\baselineskip 0.65cm
\title[Global classical solutions to the ionic VPB system in a 3D periodic box]
{Global classical   solutions   to the ionic Vlasov-Poisson-Boltzmann system in a 3D periodic box}

\author[F.-C. Li]{fucai Li}
\address{School of Mathematics, Nanjing University, Nanjing
 210093, P. R. China}
\email{fli@nju.edu.cn}

\author[Y.-C. Wang]{Yichun Wang}
\address{School of Mathematics,
Southeast University, Nanjing, 211189, P. R. China}
 \email{yichunwang@seu.edu.cn}

\date{}

\begin{abstract}
% $\left\vvvert abc\right \vvvert$
We investigate the global well-posedness of the ionic Vlasov-Poisson-Boltzmann system which models the evolution of dilute collisional ions. 
This system distinguishes the electronic Vlasov-Poisson-Boltzmann system via an additional exponential nonlinearity in the coupled Poisson-Poincar\'{e}  equation, which introduces essential mathematical difficulties. 
In a three-dimensional periodic box,  
We establish the existence of a unique global-in-time classical solution with an exponential decay under small initial perturbations of a global Maxwellian that preserve mass, momentum and energy conservation laws. Our approach combines a nonlinear energy method with quantitative nonlinear elliptic estimates and new coercivity inequalities for the linearized collision operator $\mathcal{L}$ in ion dynamics.
\end{abstract}

\keywords{Ionic Vlasov-Poisson-Boltzmann system, Poisson-Poincar\'{e} equation, % Boltzmann distribution,
global classical solution, exponential decay}
%, nonlinear energy method}

\subjclass[2020]{82D10, 35Q20, 35Q83}
 \maketitle

% \tableofcontents

\section{Introduction}

In plasma physics, the ionic Vlasov-Poisson-Boltzmann system describes the evolution of dilute ions through mutual interactions governed by self-consistent electrostatic field and collisions. Due to the analytical difficulties raising from the nonlinear Poisson equation, until now there has been no rigorous mathematical study on the well-posedness of this model in the $3D$ periodic box $\mathbb{T}^3=\mathbb{R}^3/\mathbb{Z}^3$. The purpose of this article is to fill this gap. First, for reader's convenience, we provide  some necessary background on the ion dynamics below.

\subsection{Background and models}
\subsubsection{Physics background}
Plasma exists commonly  in the universe as a state of matter, which manifests, for example,  in lightning, ionosphere, corona, accretion disk and interstellar medium  \cite{TK,sentis,plasma-astro}. This ionized gas comprises a multi-particle system of electrons and positively charged ions whose collective dynamics are governed by long-range electromagnetic interactions.
A characteristic example can be found in stellar atmospheric plasmas, which constitute electrons, protons, helium ions and few heavy ions \cite{Phillips}.
From a theoretical perspective, plasma kinetic theory occupies a unique position in statistical mechanics, bridging the gap between microscopic single-particle orbital descriptions and macroscopic magnetohydrodynamic formulations.

The collisions between charged particles constitute a form of electromagnetic interaction. It is commonly assumed that each charged particle interacts with surrounding particles within a spherical region defined by the characteristic Debye length scale, a phenomenon recognized as the Coulomb collision. Within this framework, the collision frequencies for electron-electron (ee), electron-ion (ei), and ion-ion (ii) interactions can be quantitatively described as  (for example, see  \cite{plasma-astro}):
\begin{equation}\label{collision-frequency}
\left\{ \begin{aligned}
&f_{\rm{ee}}=\frac{n_e e^4 \ln \Lambda}{3\sqrt{6}\pi \varepsilon_0^2 m_e^{\frac{1}{2}}(k_B T_e)^{\frac{3}{2}}},\\
&f_{\rm{ei}}=\frac{n_e Z_i^2 e^4 \ln \Lambda}{6\sqrt{3}\pi \varepsilon_0^2 m_e^{\frac{1}{2}}(k_B T_e)^{\frac{3}{2}}},\\
&f_{\rm{ii}}=\frac{n_i Z_i^4 e^4 \ln \Lambda}{3\sqrt{6}\pi \varepsilon_0^2 m_i^{\frac{1}{2}}(k_B T_i)^{\frac{3}{2}}}.
\end{aligned}
\right.
\end{equation}
Here, for the electrons,  $n_e$ is the number density, $e$ and $m_e$ are the magnitude of the charge and mass, and $T_e$ is the temperature. Similarly, we have $n_i$, $Z_i e$, $m_i$ and $T_i$ for the (one-species) ions. The parameter  $\varepsilon_0>0$ is the vacuum permittivity, $k_B>0$ is the Boltzmann constant and $\ln \Lambda$ is the Coulomb logarithm.

From \eqref{collision-frequency}, 
we see that the relaxation time of the ions is much longer than that of the electrons (one also refers to (2.12) in Cordier-Grenier \cite{Cordier-Grenier}).
On the ion's timescale, the electrons are at the local thermodynamical equilibrium and the density $n_e$ obeys the well-known Maxwell-Boltzmann relation:
\begin{align*}
  n_e=n_{e0}\exp\left\{ \frac{e\phi}{k_B T_e}\right\},
\end{align*}
where $\phi$ is the electrostatic potential and $n_{e0}$ is the number density of electrons as $\phi=0$. Then $\phi$ satisfies the Poisson equation
\begin{align*}
  \varepsilon_0 \Delta \phi=-(\rho_i+\rho_e)=en_{e0}\exp\left\{ \frac{e\phi}{k_B T_e}\right\}-\rho_i,
\end{align*}
where $\rho_e$ and $\rho_i$ are the charge density of electrons and ions respectively, and $\rho_e=-e n_e$, $\rho_i=Z_i e n_i$. Normalization of the parameters yields the so-called Poisson-Poincar\'{e} equation:
\begin{align}\label{p-p-eqn}
  \Delta \phi=e^\phi-\rho,
\end{align}
which is a semilinear second order elliptic equation for $\phi$.

\subsubsection{Models}
The ionic Vlasov-Poisson-Boltzmann system consists of a Vlasov-Boltzmann equation whose unknown is the   distribution function $F(t,x,v)$ of ions and  a Poisson-Poincar\'{e} equation whose unknown is the self-consistent potential $\phi(t,x)$, reading as
\begin{equation}\label{ivpb}
\left\{ \begin{aligned}
&\partial_t F + v \cdot \nabla_x F + E \cdot \nabla_v F = Q(F, F) \quad \mathrm{in}\quad  (0,\infty)\times \mathbb{T}^3\times \mathbb{R}^3,\\
&E=-\nabla \phi, \quad \Delta \phi=e^\phi-\rho \quad \mathrm{in}\quad  (0,\infty)\times \mathbb{T}^3,\\
&F|_{t=0}=F_0 \quad \mathrm{in}\quad  \mathbb{T}^3\times \mathbb{R}^3,
\end{aligned}
\right.
\end{equation}
where the  position $x\in \mathbb{T}^3$ and the velocity $v \in \mathbb{R}^3$ at time $t>0$, and $\rho=\int_{\mathbb{R}^3}Fdv$ is the macroscopic density of ions.
The initial distribution $F_0$ is assumed to be nonnegative.
The Boltzmann collision term $Q(F, F)$  on the right-hand side of \eqref{ivpb}$_1$ is given as the bilinear form
\begin{align*}
Q(F_1, F_2)\,&=   \int\!\! \int_{\mathbb{R}^3\times\mathbb{S}^2}
|(v-u)\cdot \omega| [F_1(u')F_2(v')-F_1(u)F_2(v)]d\omega du\\
\,&=: Q_{\mathrm{gain}}(F_1, F_2)-Q_{\mathrm{loss}}(F_1, F_2),
\end{align*}
which is the hard-sphere collision operator.
Here,  the relationship between the pre-collision velocity pair $(u, v)$ of two particles and the post-collision velocity pair $(u', v')$ obeys
\begin{align*}
u'=u+[(v-u)\cdot \omega]\omega, \quad v'=v-[(v-u)\cdot \omega]\omega,
\end{align*}
for $\omega \in \mathbb{S}^2$, which can be determined by the conservation laws of momentum and energy:
\begin{align*}
u+v=u'+v', \quad |u|^2+|v|^2 = |u'|^2+|v'|^2.
\end{align*}

Formally, Bardos et al. \cite{Bardos} derived a variant of 
the system \eqref{ivpb} with a time-varying electron temperature parameter determined by the conservation of energy, by taking the massless electron limit $\sqrt{ \frac{m_e}{m_i}}\rightarrow 0$ to the following bipolar Vlasov-Poisson-Boltzmann  system
\begin{equation*}
\left\{ \begin{aligned}
&\partial_t f_{-}+w\cdot \nabla_x f_{-}-\frac{q_e}{m_e}E\cdot \nabla_w f_{-}=Q_{-}(f_{-},f_{-}),\\
 &\partial_t f_{+}+v\cdot \nabla_x f_{+}-\frac{q_i}{m_i}E\cdot \nabla_v f_{+}=Q_{+}(f_{+},f_{+}),\\
 &\varepsilon_0 \nabla \cdot E = \int_{\mathbb{R}^3}\{q_if_{+}(v)-q_ef_{-}(v)\}dv.
\end{aligned}
\right.
\end{equation*}
In this article, we only consider the constant temperature case for presentation simplicity, as in \eqref{ivpb}.

For the classical solutions to the system \eqref{ivpb}, the following conservation laws of mass, momentum and energy hold:
\begin{gather*}
   \frac{d}{dt}\int\!\! \int_{\mathbb{T}^3\times \mathbb{R}^3} F(t,x,v)dxdv=0,\\
   \frac{d}{dt}\int\!\! \int_{\mathbb{T}^3\times \mathbb{R}^3} vF(t,x,v)dxdv=0,\\
   \frac{d}{dt}\left\{\int\!\! \int_{\mathbb{T}^3\times \mathbb{R}^3} |v|^2F(t,x,v)dxdv+\int_{\mathbb{T}^3}2\phi e^\phi+|\nabla\phi|^2 dx\right\}=0.
\end{gather*}

\subsection{Relevant literature}
Over the past several decades,
there has been a significant amount of mathematical advances on the well-posedness theory of plasma kinetic equations of Vlasov-Poisson type, mostly focusing on the electron dynamics. On the Vlasov-Poisson system for electrons, a series of works \cite{Schaeffer, Rendall, Perthame,Pfaffelmoser,Horst} proved the global existence and uniqueness of the classical solutions in domains without boundary,  \cite{Hwang2009,Hwang2010,Iacobelli} studied the same problem considering the boundary effects, and  \cite{Degond,Rendall,Smulevici,Mouhot-Villani} investigated the long time behaviors of the classical solutions.

Taking the collisions between electrons into account, the  Vlasov-Poisson-Boltzmann  system for electrons has been extensively studied. Basically, in the $L^2$ framework near the equilibrium, Guo \cite{Guo2002}, Yang-Zhao \cite{Zhao2006} and Yang-Yu-Zhao \cite{Hongjun} established the classical solutions in $\mathbb{T}^3$ and $\mathbb{R}^3$, respectively. Duan-Strain \cite{Duan2011} studied the optimal time decay of classical solutions in $\mathbb{R}^3$. Li-Yang-Zhong \cite{Zhong2020, Zhong2021} used the spectrum analysis and Green's function to investigate the refined properties of the classical solutions in $\mathbb{R}^3$. The above works are confined in hard-sphere collision kernel case (\cite{Zhong2021} also includes hard potential case). Meanwhile, for the hard and soft potentials with angular cutoff,
Duan-Yang-Zhao \cite{Duan2012, Duan2013} first dealt with the hard potentials and moderately soft potentials cases,
and then Xiao-Xiong-Zhao \cite{Zhao2017} established the well-posedness result in the case of very soft potentials,  
see also \cite{xxz-14} for the  non-cutoff hard potentials case.
If the interactions between particles and boundaries are  considered simultaneously, the $L^2$-$L^\infty$ method
developed by Guo \cite{Guo2010} is involved. In term of this framework,
Cao-Kim-Lee \cite{Kim} constructed global strong solutions for hard-sphere case with homogenous Neumann condition for the electric potential and the diffuse boundary condition for the particle distribution functions. See also  \cite{fucai} for soft potentials case with the incoming  boundary condition.
For the grazing collision limit from the Vlasov-Poisson-Boltzmann system to the Vlasov-Poisson-Landau system, one refers to He-Lei-Zhou \cite{He-Lei-Zhou}. 
In addition, there are many contributions  on the  hydrodynamic limit of the electronic Vlasov-Poisson-Boltzmann system, interested reader can refer to, for example, \cite{Juhi,LW-23,LYZ-21, ZhouFujun-SIAM-2021}.

In contrast, for the Vlasov-Poisson type equations of ion dynamics, the mathematical research is scarce due to the additional exponential nonlinearity in the Poisson-Poincar\'{e} equation \eqref{p-p-eqn}, which is the main difference between the ionic and electronic Vlasov-Poisson type systems. Bouchut \cite{Bouchut} first constructed the global weak solutions to the ionic Vlasov-Poisson system in $\mathbb{R}^3$. In addition, Bardos, et al. \cite{Bardos} proved the existence and uniqueness of global weak solutions with the conservation of energy to the ionic Vlasov-Poisson system in both a bounded domain $\Omega\subset \mathbb{R}^2$ and $\mathbb{T}^3$. For classical solutions, global well-posedness results of the ionic Vlasov-Poisson system have been established in $\mathbb{T}^2$ and in $\mathbb{T}^3$ 
\cite{Iacobelli2021b}, and in $\mathbb{R}^3$ \cite{Iacobelli2021a} by Griffin-Pickering and Iacobelli, and in a bounded domain $\Omega \subset \mathbb{R}^3$ by Cesbron-Iacobelli \cite{Iacobelli}. 
Recently, Choi-Koo-Song \cite{Choi-Koo-Song} established the global existence of Lagrangian solutions to the ionic Vlasov-Poisson system in $\mathbb{T}^d\, (d\geq 1)$. In addition, Gagnebin-Iacobelli \cite{Gagnebin-Iacobelli} studied the nonlinear Landau damping of the ionic Vlasov-Poisson system in $\mathbb{T}^d\, (d\geq 1)$.

If taking the ion-ion collisions into account, rigorous well-posedness results  are limited. However, notice that
Li-Yang-Zhong \cite{Zhong2016} obtained the optimal decay rate of the ionic Vlasov-Poisson-Boltzmann system in $\mathbb{R}^3$ near Maxwellians, by using spectrum analysis and macro-micro decomposition. 
Flynn \cite{Flynn} proved the local well-posedness  of the ionic Vlasov-Poisson-Landau system in $\mathbb{T}^3$ for large initial data.

To the best of our knowledge, in $\mathbb{T}^3$, the global well-posedness result on the ionic Vlasov-Poisson-Boltzmann system \eqref{ivpb}$_1$--\eqref{ivpb}$_2$ is missing from literature. The goal of this article is to establish the global  classical solutions to the  problem \eqref{ivpb} near Maxwellians in $\mathbb{T}^3$ with an exponential decay, which will be proposed in what follows.

\subsection{Perturbed equations near the equilibrium}

In this article, we shall construct a classical solution $F(t,x,v)$ to the ionic  Vlasov-Poisson-Boltzmann  system \eqref{ivpb}$_1$--\eqref{ivpb}$_2$ near the global Maxwellian
\begin{align*}
\mu\equiv \mu(v)=\frac{1}{(2\pi)^{\frac{3}{2}}}\exp\left\{-\frac{|v|^2}{2}\right\}.
\end{align*}
For this purpose, we define the standard perturbation $f(t,x,v)$ to $\mu$ as
 $$F= \mu+\sqrt{\mu} f,$$
and the initial datum $f_0(x,v)$ as $F_0=\mu+\sqrt{\mu}f_0\geq 0$.

We rewrite the problem \eqref{ivpb} by using the unknown  $f$. Thus, the ionic  Vlasov-Poisson-Boltzmann   system now becomes
\begin{align} \label{perturb eqn}
&\left\{\partial_t + v\cdot \nabla_x + E \cdot \nabla_v \right\}f-E\cdot v\sqrt{\mu}-\frac{v}{2}\cdot E f
=-\mathcal{L}f+ \Gamma(f, f),
\end{align}
coupled with
\begin{equation}\label{poisson}
\left\{ \begin{aligned}
&E=-\nabla \phi,\\
&\Delta \phi=e^\phi-\int_{\mathbb{R}^3}\sqrt{\mu}f dv-1,
\end{aligned}
\right.
\end{equation}
and supplemented with the initial datum
\begin{align}\label{initial-condition}
  f(0,x,v)=f_0(x,v).
\end{align}
Here, $\mathcal{L}$ is the linearized Boltzmann collision operator defined as
\begin{align*}
\mathcal{L}f=\nu f-\mathcal{K}f,
\end{align*}
where $\nu(v)= \int_{\mathbb{R}^3} |(v-u)\cdot \omega| \mu(u)dud\omega$, and $\mathcal{K}$ is split into $\mathcal{K}:=\mathcal{K}_2-\mathcal{K}_1$ with
\begin{align*}
\mathcal{K}_1 f(v):=&  \int\!\! \int_{\mathbb{R}^3\times\mathbb{S}^2} |(v-u)\cdot \omega| \mu^{1/2}(u)\mu^{1/2}(v)f(u)dud\omega,\\
\mathcal{K}_2 f(v):=&  \int\!\! \int_{\mathbb{R}^3\times\mathbb{S}^2} |(v-u)\cdot \omega| \mu^{1/2}(u)\{\mu^{1/2}(u')g(v')+\mu^{1/2}(v')g(u')\}
dud\omega.
\end{align*}
We also define the bilinear collision operator $\Gamma (\cdot,\cdot)$ as
\begin{align*}
\Gamma(g_1,g_2)(v):=\,&\mu^{-1/2}(v)Q(\mu^{1/2}g_1, \mu^{1/2} g_2)(v)\\
=\,& \int\!\! \int_{\mathbb{R}^3\times\mathbb{S}^2}|(v-u)\cdot \omega|\mu^{1/2}(u)g_1(u')g_2(v')dud\omega\\
&-\int\!\! \int_{\mathbb{R}^3\times\mathbb{S}^2}|(v-u)\cdot \omega|\mu^{1/2}(u)g_1(u)g_2(v)dud\omega\\
=:\,& \Gamma_{\rm{gain}}(g_1,g_2)(v)
-\Gamma_{\rm{loss}}(g_1,g_2)(v).
\end{align*}

By assuming that $[F_0,\phi_{F_0}]$ has the same mass, momentum and total energy as the steady state $[\mu, 0]$, we can then rewrite the conservation laws in terms of $[f,\phi_f]$ as
\begin{gather}
   \int\!\! \int_{\mathbb{T}^3\times\mathbb{R}^3}f(t,x,v)\sqrt{\mu(v)}dxdv\equiv 0,\label{mass}\\
  \int\!\! \int_{\mathbb{T}^3\times\mathbb{R}^3}vf(t,x,v)\sqrt{\mu(v)}dxdv\equiv 0,\label{momentum}\\
   \int\!\!  \int_{\mathbb{T}^3\times \mathbb{R}^3}\frac{1}{2}|v|^2 f(t,x,v)\sqrt{\mu(v)}dxdv+\int_{\mathbb{T}^3}\phi_f e^{\phi_f}dx+\int_{\mathbb{T}^3}\frac{1}{2} |\nabla \phi_f|^2dx  \equiv 0. \label{energy}
\end{gather}
Here we have used the notations $\phi_{F_0}$ and $\phi_{f}$ to emphasise that $\phi$ depends on ${F_0}$ and ${f}$, respectively. As a by-product, we also deduce that
\begin{align}\label{neutural}
  \int_{\mathbb{T}^3}e^{\phi_f}dx=\int_{\mathbb{T}^3} (\Delta \phi_f +1)dx + \int\!\! \int_{\mathbb{T}^3\times\mathbb{R}^3}f(t,x,v)\sqrt{\mu(v)}dxdv=|\mathbb{T}^3|=1.
\end{align}

\subsection{Notations and energy functional}
Throughout this article,
the constant $C>0$ in the relationship $a\leq Cb$ may be different and depends on a variable collection of parameters in the context. This notation is used mostly inside the proofs of lemmas, propositions and theorems.
The symbol $a\sim b$ means that there exist two constants  $c, C>0$ such that $cb\leq a\leq Cb$.

We use $(\cdot , \cdot)$ to denote the standard $L^2$ inner product in $\mathbb{T}^3_x\times \mathbb{R}^3_v$ for a pair of functions $g, h\in L^2(\mathbb{T}^3_x\times \mathbb{R}^3_v)$ and $\langle\cdot,\cdot\rangle$ to denote the $L^2$ product in $\mathbb{R}^3_v$. In the sequel, we use $\|\cdot\|$ to denote $L^2$ norms in $\mathbb{T}^3_x\times \mathbb{R}^3_v$ or $\mathbb{T}^3_x$, and $|\cdot|_0$ to denote $L^2$ norm in $\mathbb{R}^3_v$ unless otherwise specified.
We also denote $$( h_1, h_2)_\nu:= ( \nu(v)h_1, h_2)$$ to be the $\nu$-weighted $L^2$ inner product in $\mathbb{T}_x^3\times \mathbb{R}_v^3$, and $\|\cdot\|_\nu$ to be the corresponding weighted $L^2$ norm. While $|\cdot|_\nu$ denotes the weighted $L^2$ norm in $\mathbb{R}^3_v$.

Let the multi-indices $\gamma$ and $\beta$ be
\begin{align*}
 \gamma=[\gamma_0,\gamma_1,\gamma_2,\gamma_3],\quad \beta=[\beta_1, \beta_2, \beta_3],
\end{align*}
and
\begin{align*}
 \partial_\beta^\gamma :=\partial_t^{\gamma_0}\partial_{x_1}^{\gamma_1}\partial_{x_2}^{\gamma_2}\partial_{x_3}^{\gamma_3}
 \partial_{v_1}^{\beta_1}\partial_{v_2}^{\beta_2}\partial_{v_3}^{\beta_3}.
\end{align*}
For the two multi-indices $\tilde{\alpha}$ and $\alpha$, we denote
$\tilde \alpha \leq \alpha$ if each component of $\tilde \alpha $ is not greater than that of $\alpha$'s, while $\tilde \alpha < \alpha$ means that $\tilde \alpha\leq \alpha$ and $|\tilde \alpha |< |\alpha|$.

For a fixed $N\in \mathbb{N}$, we define
\begin{align*}
  \vvvert f \vvvert (t):=& \sum_{|\gamma|+|\beta|\leq N} \|\partial_\beta^\gamma f(t)\|,\\
  \vvvert f \vvvert_\nu(t):=& \sum_{|\gamma|+|\beta|\leq N} \|\partial_\beta^\gamma f(t)\|_\nu,
\end{align*}
where $\nu(v)\sim (1+|v|^2)^{1/2}$.
We also define the high order energy functional for $f(t,x,v)$ as
\begin{align}\label{Efunctional}
  \mathcal{E}(f)(t)=\mathcal{E}(t):=\vvvert f \vvvert   ^2 (t)+\int_0^t \vvvert f \vvvert_{\nu}^2 (s)ds.
\end{align}
Given the initial datum $f_0(x,v)$, we define the initial energy functional:
\begin{align*}
  \mathcal{E}(0):=\sum_{|\gamma|+|\beta|\leq N} \|\partial_\beta^\gamma f_0\|^2\triangleq \vvvert    f_0\vvvert   ^2,
\end{align*}
where the temporal derivatives of $f_0$ are defined naturally through \eqref{perturb eqn}--\eqref{poisson}.

\subsection{Main result}
We state our main result as follows.
\begin{theorem}\label{Thm1}
Let $N\geq 4$ be an integer. Assume that $F_0(x,v)=\mu(v)+\sqrt{\mu(v)}f_0(x,v)\geq 0$ and $f_0$ satisfies the conservation laws \eqref{mass}--\eqref{energy}.   Then, there exists a  $\varkappa >0$ such that if
$$\mathcal{E}(0)\leq \varkappa,$$
 the ionic  Vlasov-Poisson-Boltzmann  system \eqref{perturb eqn}--\eqref{poisson} with the initial datum \eqref{initial-condition} enjoys a unique global classical solution $f(t,x,v)$
satisfying
$F(t,x,v)=\mu(v)+\sqrt{\mu(v)}f(t,x,v)\geq 0$, and
\begin{align*}
  \sup_{0\leq t \leq \infty}\mathcal{E}(t)\leq C_0 \mathcal{E}(0)
\end{align*}
for some constant $C_0>0$. Moreover, there exist two constants $C_1, C_2>0$ such that
\begin{align*}
 \vvvert f \vvvert^2(t)\leq C_1 e^{-C_2t}\mathcal{E}(0).
\end{align*}
\end{theorem}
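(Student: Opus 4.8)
\emph{Strategy.} The plan is the nonlinear energy method combined with a continuity argument. First I would establish local-in-time existence, uniqueness and nonnegativity on some interval $[0,T_*]$ with $\sup_{[0,T_*]}\mathcal{E}(t)\le 2\mathcal{E}(0)$, via a linearized iteration: at each stage one solves a linear Vlasov-Boltzmann equation coupled to \eqref{poisson} with $f^n$ in place of $f$, derives uniform energy bounds, and passes to the limit; the bound $F=\mu+\sqrt\mu f\ge 0$ persists because the loss term in the Duhamel formula has the good sign. The core is then the global a priori estimate: under the bootstrap hypothesis $\sup_{[0,T]}\mathcal{E}(t)\le\delta_0\ll 1$ I want the closed differential inequality
\[
  \frac{d}{dt}\mathcal{G}(t)+c\,\vvvert f\vvvert_{\langle v\rangle}^2(t)\le C\sqrt{\mathcal{E}(t)}\,\vvvert f\vvvert_{\langle v\rangle}^2(t),
\]
where $\mathcal{G}(t)\sim\vvvert f\vvvert^2(t)$ is the instant energy plus a small macroscopic correction. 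For $\delta_0$ small this yields $\mathcal{E}(t)\le C_0\mathcal{E}(0)$, and the standard continuation criterion makes the solution global; uniqueness follows from a Gr\"onwall estimate on the difference of two solutions.

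\emph{Coercivity and the high-order collision estimates.} I would first record that $\mathcal{L}$ is self-adjoint and nonnegative on $L^2_v$ with kernel $\mathcal{N}=\mathrm{span}\{\sqrt\mu,v_1\sqrt\mu,v_2\sqrt\mu,v_3\sqrt\mu,|v|^2\sqrt\mu\}$ and the spectral gap $\langle\mathcal{L}g,g\rangle\gtrsim|\nu^{1/2}(\mathbf{I}-\mathbf{P})g|^2$; here $\mathbf{P}$ is the projection onto $\mathcal{N}$ and $\mathbf{P}f=\{a+v\cdot b+(|v|^2-3)c\}\sqrt\mu$. The first main obstacle is that the energy functional \eqref{Efunctional} demands microscopic dissipation in the \emph{stronger} $\langle v\rangle^{1/2}$-weighted norm $\vvvert\cdot\vvvert_{\langle v\rangle}$, which for $r<1$ exceeds the bare spectral gap of $\mathcal{L}$; recovering it uniformly over $r\in(-3,1]$ under the cutoff $b_0(\theta)\le C|\cos\theta|$ requires the ion-model coercivity analysis --- a refined treatment of $\mathcal{K}=\mathcal{K}_2-\mathcal{K}_1$ together with the structure of the full linearized system --- which is precisely the new ingredient announced in the abstract. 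Granting the needed microscopic dissipation, I would apply $\partial_\beta^\gamma$ to \eqref{perturb eqn}, pair with $\partial_\beta^\gamma f$ in $L^2(\mathbb{T}^3_x\times\mathbb{R}^3_v)$, sum over $|\gamma|+|\beta|\le N$, and use that $[\partial_\beta,\mathcal{L}]$ is lower order with an $\varepsilon$-absorbable $\nu$-bound, obtaining
\[
  \tfrac12\tfrac{d}{dt}\vvvert f\vvvert^2+c\,\vvvert(\mathbf{I}-\mathbf{P})f\vvvert_{\langle v\rangle}^2\le(\text{field and transport terms})+(\text{nonlinear terms}).
\]
Since $\mathbf{P}f$ carries a Gaussian weight in $v$, one has $\vvvert\mathbf{P}f\vvvert_{\langle v\rangle}\lesssim\sum_{|\gamma|\le N}\|\partial^\gamma(a,b,c)\|$, so the full dissipation $\vvvert f\vvvert_{\langle v\rangle}^2$ is recovered once the macroscopic part is estimated.

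\emph{Nonlinear elliptic regularity and macroscopic estimates.} For the field I would rewrite \eqref{poisson} as $(\Delta-1)\phi=-\rho_f+h(\phi)$ with $\rho_f:=\int_{\mathbb{R}^3}\sqrt\mu f\,dv$ and $h(\phi):=e^\phi-1-\phi=O(\phi^2)$. Since $\Delta-1$ is boundedly invertible on every $H^s(\mathbb{T}^3)$ with no zero-mode obstruction, I would first bootstrap $L^\infty$-smallness: $\|\phi\|_{H^2}\lesssim\|\rho_f\|+\|h(\phi)\|_{L^2}\lesssim\sqrt{\mathcal{E}}+\|\phi\|_{L^\infty}\|\phi\|_{L^2}$, hence $\|\phi\|_{H^2}\lesssim\sqrt{\mathcal{E}}$ and $\|\phi\|_{L^\infty}\ll 1$ for $\mathcal{E}$ small; then Moser-type product estimates give $\|h(\phi)\|_{H^s}\lesssim\|\phi\|_{L^\infty}\|\phi\|_{H^s}$, which is absorbed, yielding the nonlinear elliptic bound $\sum_{|\gamma|\le N}(\|\partial^\gamma\phi\|+\|\partial^\gamma\nabla\phi\|)\lesssim\sum_{|\gamma|\le N}\|\partial^\gamma\rho_f\|\lesssim\vvvert f\vvvert$ and its time-differentiated analogues; thus $E=-\nabla\phi$ and all its derivatives are controlled by $f$, the exponential term always being higher order. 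Projecting \eqref{perturb eqn} onto $\mathcal{N}$ and $\mathcal{N}^\perp$ produces the macroscopic system for $(a,b,c)$ --- local conservation-type equations in which the field enters through the $-E\cdot v\sqrt\mu$ term, together with elliptic relations for $\nabla b,\nabla c$ from the third moments. Testing these against suitable $x$-derivatives, using \eqref{poisson} to convert $\int_{\mathbb{T}^3}\nabla\phi\cdot b\,dx$ into $\|\phi\|_{H^1}$-type terms, and invoking \eqref{mass}--\eqref{momentum} (to give $a,b$ zero spatial mean so that Poincar\'e on $\mathbb{T}^3$ upgrades $\|\nabla(a,b,c)\|$ to $\|(a,b,c)\|$) and \eqref{energy} (whose linearization, via $\int_{\mathbb{T}^3}\phi e^\phi\,dx=\tfrac12\|\phi\|_{L^2}^2+O(\|\phi\|_{L^3}^3)$, supplies an extra ion-specific nonnegative control of $\|\phi\|_{H^1}$), one obtains
\[
  \tfrac{d}{dt}\mathcal{G}_{\mathrm{int}}(t)+c\sum_{|\gamma|\le N}\|\partial^\gamma(a,b,c)\|^2\lesssim\vvvert(\mathbf{I}-\mathbf{P})f\vvvert_\nu^2+\|E\|_{H^N}^2+(\text{nonlinear})
\]
with $|\mathcal{G}_{\mathrm{int}}(t)|\lesssim\vvvert f\vvvert^2(t)$.

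\emph{Closing the estimate and the exponential decay.} Adding a small multiple of $\mathcal{G}_{\mathrm{int}}$ to $\vvvert f\vvvert^2$ gives $\mathcal{G}(t)\sim\vvvert f\vvvert^2(t)$. For the leftover terms: the linear weight/transport pieces $E\cdot\nabla_v\partial_\beta^\gamma f$, $\tfrac{v}{2}\cdot E\,\partial_\beta^\gamma f$ and their commutators are $\lesssim\|E\|_{W^{N,\infty}_x}\vvvert f\vvvert_{\langle v\rangle}^2\lesssim\sqrt{\mathcal{E}}\,\vvvert f\vvvert_{\langle v\rangle}^2$ by the elliptic bound; the linear source $E\cdot v\sqrt\mu$ is absorbed via $\eta\vvvert f\vvvert_{\langle v\rangle}^2+C_\eta\|E\|_{H^N}^2$ with $\|E\|_{H^N}\lesssim\|\rho_f\|_{H^N}\lesssim\vvvert(\mathbf{I}-\mathbf{P})f\vvvert+\|(a,b,c)\|$; and the genuinely quadratic terms $\Gamma(f,f)$ and $\tfrac{v}{2}\cdot Ef$ obey $|(\partial_\beta^\gamma\Gamma(f,f),\partial_\beta^\gamma f)|\lesssim\sqrt{\mathcal{E}}\,\vvvert f\vvvert_{\langle v\rangle}^2$ by the standard trilinear estimates (Sobolev in $x$, $\nu$-weighted in $v$). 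Combining the collision and macroscopic inequalities with small coupling constants yields the closed inequality above, hence $\sup_t\mathcal{E}(t)\le C_0\mathcal{E}(0)$ and the global solution. Finally, since $\langle v\rangle\ge 1$ forces $\vvvert f\vvvert^2\le\vvvert f\vvvert_{\langle v\rangle}^2$, the closed inequality becomes $\tfrac{d}{dt}\mathcal{G}(t)+c'\mathcal{G}(t)\le 0$ for $\mathcal{E}$ small, so Gr\"onwall gives $\vvvert f\vvvert^2(t)\sim\mathcal{G}(t)\le e^{-c't}\mathcal{G}(0)\lesssim e^{-c't}\mathcal{E}(0)$, which is the asserted decay. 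I expect the principal difficulty to lie precisely where the two announced novelties meet: producing the $\langle v\rangle$-weighted microscopic dissipation uniformly across $r\in(-3,1]$, and carrying the macroscopic and field estimates through the exponentially nonlinear Poisson-Poincar\'e equation, where the term $e^\phi$ forces the nonlinear elliptic bootstrap at every stage and obstructs the usual zero-mean and Poincar\'e arguments until the $L^\infty$-smallness of $\phi$ has been secured.
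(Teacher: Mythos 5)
There is a genuine gap at precisely the point your proposal defers: the source of the $\langle v\rangle^{1/2}$-weighted dissipation for $r<1$. You write that recovering $\vvvert f\vvvert_{\langle v\rangle}^2$ "requires a refined treatment of $\mathcal{K}=\mathcal{K}_2-\mathcal{K}_1$ together with the structure of the full linearized system," and then proceed "granting the needed microscopic dissipation." No refinement of the collision operator can deliver this: since $\langle\mathcal{L}g,g\rangle\leq C|g|_\nu^2$ with $\nu\sim\langle v\rangle^{r}$, the linearized operator simply cannot produce a coercive lower bound in a norm stronger than $|\cdot|_\nu$ when $r<1$, so the mechanism you point to would fail. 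In the paper the extra weight does not come from $\mathcal{L}$ at all but from the spatial--velocity weight $W=e^{-q\,x\cdot v/\langle v\rangle}$ (usable because $x\in\mathbb{T}^3$ keeps $W\sim 1$): the transport operator acting on $W^2$ generates the term $\Xi=q|v|^2/\langle v\rangle+O(\|E\|_\infty\langle v\rangle)$, and $\nu+\Xi\sim\langle v\rangle$ once $\|E\|_\infty\ll1$, which is exactly what makes the unified energy functional \eqref{Efunctional} work for all $r\in(-3,1]$. The coercivity theorem of the paper (Theorem \ref{coercivityThm}) only controls $\mathbf{P}\partial^\gamma f$ by $\{\mathbf{I}-\mathbf{P}\}\partial^\gamma f$ in the $\nu$-norm; it is not the device that upgrades $\nu$ to $\langle v\rangle$. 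Your proposal is therefore missing the central construction rather than merely postponing a technical lemma.

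A second concrete problem is your treatment of the linear source $E\cdot v\sqrt{\mu}$ and, relatedly, of $E_i$ inside the macroscopic system. Pairing $\partial^\gamma E\cdot v\sqrt\mu$ with $\partial^\gamma f$ produces essentially $\varrho_2\int_{\mathbb{T}^3}\partial^\gamma E\cdot\partial^\gamma b\,dx$, which is quadratic in the solution with no small prefactor and of exactly the same order as the dissipation; absorbing it by Young's inequality as $\eta\vvvert f\vvvert_{\langle v\rangle}^2+C_\eta\|E\|_{H^N}^2$ cannot close, because $C_\eta\|E\|_{H^N}^2\lesssim C_\eta\|(a,c)\|^2$ with a large constant $C_\eta$, which the macroscopic dissipation (fixed small constant) cannot dominate; the same objection applies to the $\|E\|_{H^N}^2$ term you leave on the right of your macroscopic inequality. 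The paper instead exploits structure: the local mass conservation and the Poisson--Poincar\'e equation turn this term into an exact time derivative $\tfrac{d}{dt}\big(\|\nabla\partial^\gamma\phi\|^2+\|\partial^\gamma\phi\,e^{\phi/2}\|^2\big)$ plus cubic remainders (the ionic $e^\phi$ contributing the second piece), and in the coercivity proof the field is handled by the signed combination of \eqref{lbi}, \eqref{lc} and the Poisson equation tested against $\varrho_0a+\varrho_2c$, $Kc$ and $e^\phi-1$, which places $(\nabla\cdot E)^2$ and $|\nabla\partial^\gamma\phi|^2e^\phi$ on the favorable side instead of $\|E\|^2$ on the unfavorable one. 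Your elliptic bootstrap for $\phi$ (rewriting $\Delta\phi-\phi=-\rho_f+O(\phi^2)$ and securing $\|\phi\|_{L^\infty}\ll1$) and the overall continuation scheme do match the paper, but without the weight $W$ and without the time-derivative/sign structure for the field terms the differential inequality you aim for cannot be derived.
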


\subsection{Key points of the proof}
Now we explain our main ideas and key points developed to the proof of  Theorem \ref{Thm1}.

\subsubsection{Nonlinear energy method}
Our main strategy aimed to the nonlinear stability of the global Maxwellian $\mu$ for the ionic  Vlasov-Poisson-Boltzmann  system
\eqref{perturb eqn}--\eqref{poisson} is based on the nonlinear $L^2$ energy method first devised by Guo \cite{Guo2002L}. It has inspired many subsequent works, such as \cite{Guo2002} for the electronic  Vlasov-Poisson-Boltzmann  system, \cite{Guo2003} for the Vlasov-Maxwell-Boltzmann system, and \cite{Guo2012,JAMS2023} for the Vlasov-Poisson-Landau
system.

This method can be applied to many kinetic models of the form
\begin{align}\label{toy-model}
  \partial_t f+v\cdot \nabla_x f+\mathcal{L}f=\Gamma (f,f).
\end{align}
The main idea is to find an energy norm $\|\cdot \|_{\mathcal{E}}$, a dissipation norm $\|\cdot \|_{\mathcal{D}}$ and a suitable constant $\delta>0$ such that
\begin{align}\label{dissipation}
  \frac{d}{dt}\|f\|^2_{\mathcal{E}}+\delta \|f\|^2_{\mathcal{D}}\leq 0.
\end{align}
The choice of the energy and dissipation norms is delicate.  For the model \eqref{perturb eqn} in hard-sphere case, the energy norm is actually $\vvvert \cdot \vvvert$, and the dissipation norm is actually $\vvvert \cdot \vvvert_{\nu}$
which comes from the linear part of the Boltzmann collision operator $\mathcal{L}f=\nu f-\mathcal{K}f$, and satisfies
$|\langle f, \Gamma (f,f)\rangle_{\mathcal{E}}|\leq C
\|f \|_{\mathcal{E}} \|f \|^2_{\mathcal{D}}$ by Lemma \ref{Gamma}. Then \eqref{dissipation} is obtained from a suitable small assumption of $\|f \|_{\mathcal{E}}$ in Section \ref{decay-section}, combining with the positivity of $\mathcal{L}$ in Section \ref{coerci-section}. Moreover, noting that the norm $\|\cdot \|_{\mathcal{E}}$ is weaker than $\|\cdot \|_{\mathcal{D}}$, then the solution $f$ decays exponentially in $\|\cdot \|_{\mathcal{E}}$   by \eqref{dissipation} and Gronwall's inequality.

\subsubsection{Elliptic estimates of the  Poisson-Poincar\'{e} equation \eqref{p-p-eqn}}
To study the Poisson-Poincar\'{e} equation \eqref{p-p-eqn} in $\mathbb{T}^3$, we follow \cite{Iacobelli2021b} to decompose the electric potential $\phi$ into the form $\phi=\overline{\phi}+\widehat{\phi}$ satisfying
\begin{align*}
\Delta \overline{\phi}=1-\rho,\quad \Delta \widehat{\phi}=e^{\overline{\phi}+\widehat{\phi}}-1.
\end{align*}
Assuming $$\rho:=\int_{\mathbb{R}^3} \sqrt{\mu}f dv+1\in L^2(\mathbb{T}^3),$$
we obtain the existence and uniqueness of $\overline{\phi}\in H^2(\mathbb{T}^3)$ and $\widehat{\phi}\in H^2(\mathbb{T}^3)$ with upper bounds independent of $\overline{\phi}$ and $\widehat{\phi}$.
It follows that $\phi\in C(\mathbb{T}^3)$ with $\|\phi\|_{C(\mathbb{T}^3)}=O(1)$ (see Remark \ref{remark3.2}) and thus $e^\phi$ has  positive upper and lower bounds. By the mean value theorem, we obtain in \eqref{U-xi} that
\begin{align*}
  \Delta \phi=e^\phi-1+1-\rho=\phi e^{\phi_\xi}-\int_{\mathbb{R}^3} \sqrt{\mu} fdv,
\end{align*}
hence we can use the standard linear elliptic estimates to get
\begin{align*}
  \|\phi\|_{L^\infty(\mathbb{T}^3)} \leq C\| \phi\|_{H^2(\mathbb{T}^3)}\leq C e^{2\|\phi\|_{L^\infty(\mathbb{T}^3)}}\|f\|_{L^2(\mathbb{T}^3\times \mathbb{R}^3)}.
\end{align*}
Since $\|f\|_{L^2(\mathbb{T}^3\times \mathbb{R}^3)}$ is controlled by $ \|f\|_{\mathcal{E}}$ which is assumed to be sufficiently small,   we know that  $\|\phi\|_{L^\infty(\mathbb{T}^3)}$ is also small. Therefore, we obtain $\| \phi\|_{H^2(\mathbb{T}^3)}\leq C \|f\|_{L^2(\mathbb{T}^3\times \mathbb{R}^3)}$, where $C$ is independent of $\phi$ and $f$. The stability of \eqref{p-p-eqn} follows by Corollary \ref{U1-U2}.

Furthermore, taking $\partial^\gamma$ derivatives to \eqref{p-p-eqn} yields
\begin{align*}
  \Delta \partial^\gamma \phi-e^\phi \partial^\gamma \phi= \mathcal{S}-\int_{\mathbb{R}^3} \sqrt{\mu}\partial^\gamma f dv,
\end{align*}
where $\mathcal{S}$ is constituted by lower order derivatives of $\phi$. Inductively we have (see also Remark \ref{remark3.6})
\begin{align*}
  \| \partial^\gamma \phi\|_{H^2(\mathbb{T}^3)}\leq C \sum_{|\gamma|\leq N}\|\partial^\gamma f\|_{L^2(\mathbb{T}^3\times \mathbb{R}^3)}\leq C \|f\|_{\mathcal{E}}.
\end{align*}
The above $L^2$ estimates for the  Poisson-Poincar\'{e} equation \eqref{p-p-eqn} are sufficient to establish
 the local well-posedness of the problem \eqref{ivpb}, as in Section \ref{local-section}. Meanwhile, the elliptic estimates make $\|\phi\|_{\mathcal{E}}$ be absorbed by $\|f\|_{\mathcal{E}}$. This is the reason
that we only need to define $\|\cdot\|_{\mathcal{E}}$ by $\vvvert f \vvvert $ rather than $\vvvert f \vvvert+\vvvert \phi \vvvert$. As a contrast, one refers to \cite{Guo2003} for the explanations on the  construction of energy functional for the  Vlasov-Maxwell-Boltzmann system
where the electronic and magnetic fields are included.

\subsubsection{Coercivity of the linearized operator $\mathcal{L}$}
Heuristically, we denote $\mathcal{B}=-\mathcal{L}-v\cdot \nabla_x$, then the integral form of \eqref{toy-model}
read as  (cf.  \cite{cercignani} for the Boltzmann equation case)
\begin{align*}
  f(t)=e^{t\mathcal{B}}f(0)+\int_0^t e^{(t-s)\mathcal{B}}\Gamma(f(s),f(s))ds.
\end{align*}
If it were proved that
\begin{align*}
 \|e^{t\mathcal{B}}\|\leq C e^{-Ct} \quad\text{and}\quad  \|\Gamma (f,f)\|\leq C \|f\|^2
\end{align*}
hold in some norm $\|\cdot\|$ for some $C>0$, then we   have
\begin{align*}
  \|f(t)\|\leq C e^{-Ct} \|f(0)\|+\int_0^t C e^{-C(t-s)}\|f(s)\|^2 ds.
\end{align*}
Denoting $\breve{f}=\sup_t \|f(t)e^{ct}\|$, it holds
\begin{align*}
 \breve{f}\leq C \|f(0)\|+C \breve{f}^2,
\end{align*}
which implies the boundedness of $\breve{f}$ under some smallness conditions. However, the five dimensional null space of $\mathcal{L}$ prevents us from this strategy. In fact, $e^{-t\mathcal{L}}h=h$ as  $h$ belongs to the kernel of
$ \mathcal{L}$, so $\|e^{-t\mathcal{L}}\|\leq  e^{-Ct}$ does not hold.

Henceforth, we devote to establishing the coercivity estimates of $\mathcal{L}$ in terms of the solution $f$ to the ionic  Vlasov-Poisson-Boltzmann  system \eqref{perturb eqn}--\eqref{poisson} with the conservation laws \eqref{mass}--\eqref{energy}.
The key point is to control the macroscopic part of $\partial^\gamma f$ by its microscopic part in $\|\cdot\|_\nu$ norm. Define the macroscopic function $a(t,x)$, $b(t,x)$ and $c(t,x)$ as
\begin{align*}
  \mathbf{P} \partial^\gamma f(t,x,v):=\{\partial^\gamma a(t,x)+\partial^\gamma b(t,x)\cdot v+\partial^\gamma c(t,x)|v|^2\}\sqrt{\mu},
\end{align*}
where $\mathbf{P}$ is the orthogonal projection from $L^2_v$ to $\ker (\mathcal{L}) (=:\mathscr{N})$.
Due to the nonlinear exponential term in the nonlinear Poisson equation \eqref{poisson}, it raises new difficulties for estimating the pure spatial derivatives of $a(t,x)$, which satisfies \eqref{lbi}: $$\partial^0 b_i+\partial^i a-E_i=l_{bi}+h_{bi}.$$
Here, $l_{bi}$ and $h_{bi}$ are the components of $l$ and $h$ under $v_i\sqrt{\mu}\in L^2(\mathbb{R}^3)$ defined in \eqref{def-component-l+h}.
A first glimpse leads to $\|\partial^i a\|\leq \|\partial^0 b_i\|+\|l_{bi}\|+\|h_{bi}\|+\|E_i\|$, but $\|E_i\|$ cannot be controlled by
$$C\sum_{|\gamma|\leq N}\|\{\mathbf{I}-\mathbf{P}\}\partial^\gamma f(t)\| +C\sqrt{M_0}\sum_{|\gamma|\leq N}\|\partial^\gamma f(t)\|,$$
required by \eqref{abc}, where $M_0\ll 1$. We thus ought to combine the density of ion: $\varrho_0 a +\varrho_2 c$ $(\varrho_0=\int \mu dv, \varrho_2=\int |v|^2 \mu dv)$, with the density of electron: $e^\phi$. In fact, from \eqref{lc} and \eqref{lbi} we have three identities:
\begin{equation*}
\left.%\{
 \begin{aligned}
&{(\rm i}).\,\,-\Delta a+\nabla\cdot E=\nabla\cdot \partial^0 b-\sum_i \partial^i (l_{bi}+h_{bi}), \\
& ({\rm ii}).\,\,-\Delta c=-\nabla\cdot (l_c+h_c),\\
& ({\rm iii}).\,\,-\Delta \phi-\nabla \cdot E=0.
\end{aligned}
\right.
\end{equation*}
For some sufficiently large constant $\Theta>0$, we operate (i)$\times (\rho_0 a+\rho_2 c)$+(ii)$\times \Theta c$+(iii)$\times (e^\phi-1)$ and then integrate the result  over $\mathbb{T}^3$ to get
\begin{align*}
  &\int_{\mathbb{T}^3} |\nabla a|^2 +\Theta |\nabla c|^2+|\nabla \phi|^2 e^\phi+(\nabla\cdot E)^2+(\nabla a \cdot \nabla c)dx\\
  &\qquad \leq C (\|\partial^0 b\|+\|l_{bi}+h_{bi}\|+\|l_c+h_c\|)(\|\nabla a\|+\|\nabla c\|).
\end{align*}
It follows that
\begin{align*}
  \|\nabla a\|+\|\nabla c\|\leq &\,C (\|\partial^0 b\|+\|l_{bi}+h_{bi}\|+\|l_c+h_c\|),
\end{align*}
which can be controlled by $C\sum_{|\gamma|\leq N}\|\{\mathbf{I}-\mathbf{P}\}\partial^\gamma f(t)\| +C\sqrt{M_0}\sum_{|\gamma|\leq N}\|\partial^\gamma f(t)\|$.
Throughout the above deductions, we remark that the identity  \eqref{lc} of $c$ plays a crucial role in estimating $\nabla a$. The high-order derivative analogies of this estimate are presented in Section \ref{coerci-section} with some extra techniques.

\subsection{Outline of the article}
The structure of this paper is organized as follows. Section \ref{prelimi-section} presents fundamental estimates for the Boltzmann operator, including essential proof details. In Section \ref{ellip-section}, we establish the $L^2$ regularity estimates for the Poisson-Poincaré equation \eqref{p-p-eqn}. Section \ref{local-section} focuses on demonstrating local well-posedness of the initial value problem \eqref{perturb eqn}--\eqref{initial-condition} with small initial data through Picard iteration. The coercivity property of the linearized operator $\mathcal{L}$ is rigorously proved in Section \ref{coerci-section} (Theorem \ref{coercivityThm}). Finally, Section \ref{decay-section} constructs a global-in-time solution with exponential decay properties.

\section{Preliminaries}\label{prelimi-section}
For reader's convenience, we present some basic estimates about the linearized operator $\mathcal{L}=\nu-\mathcal{K}$ and the nonlinear operator $\Gamma$
in this section.
\begin{lemma}
It holds $\langle \mathcal{L} g,h\rangle=\langle \mathcal{L} h,g\rangle$, $\langle \mathcal{L} g,g\rangle\geq 0$. And $\mathcal{L}g=0$ if and only if
$$g=\mathbf{P}g,$$
where $\mathbf{P}: L_v^2(\mathbb{R}^3)\rightarrow \mathscr{N}$ is the orthogonal projection in $L_v^2(\mathbb{R}^3)$ to the null space $\mathscr{N}$ of $\mathcal{L}$, which is a five dimensional space spanned by $\{\sqrt{\mu}, v_1\sqrt{\mu}, v_2\sqrt{\mu},v_3\sqrt{\mu}, |v|^2\sqrt{\mu}\}$.

Moreover, there exists a constant $\delta>0$ such that
\begin{align*}
  \langle \mathcal{L} g,g\rangle\geq \delta |\{\mathbf{I}-\mathbf{P}\}g|_\nu^2.
\end{align*}
\end{lemma}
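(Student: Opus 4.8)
The plan is to deduce all four assertions from the classical theory of the linearized Boltzmann collision operator, since the coupling to the Poisson--Poincar\'e equation leaves $\mathcal{L}$ untouched: $\mathcal{L}=\nu-\mathcal{K}$ with $\mathcal{K}=\mathcal{K}_2-\mathcal{K}_1$ is exactly the linearization around $\mu$ of the single-species cutoff Boltzmann operator for $r\in(-3,1]$. First I would put $\langle\mathcal{L}g,h\rangle$ into its symmetric (Carleman) form: starting from the definitions of $\nu,\mathcal{K}_1,\mathcal{K}_2$ and the pre/post-collision relations, and applying the involutive changes of variables $(u,v)\mapsto(u',v')$ and $(u,v)\mapsto(v,u)$ — each of which preserves $|v-u|^rb_0(\theta)$ and the measure, and under which $\mu(u)\mu(v)=\mu(u')\mu(v')$ — one rewrites, after also integrating in $v$,
\[
\langle\mathcal{L}g,h\rangle=\frac14\int_{\mathbb{R}^3}\!\int_{\mathbb{R}^3}\!\int_{\mathbb{S}^2}|v-u|^rb_0(\theta)\,\mu(u)\mu(v)\,(\Psi g)(\Psi h)\,d\omega\,du\,dv,
\]
where $\Psi g:=\tfrac{g(u')}{\sqrt{\mu(u')}}+\tfrac{g(v')}{\sqrt{\mu(v')}}-\tfrac{g(u)}{\sqrt{\mu(u)}}-\tfrac{g(v)}{\sqrt{\mu(v)}}$. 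Symmetry in $(g,h)$ is then manifest, giving $\langle\mathcal{L}g,h\rangle=\langle\mathcal{L}h,g\rangle$; taking $h=g$ makes the integrand pointwise nonnegative, giving $\langle\mathcal{L}g,g\rangle\ge0$. Moreover $\langle\mathcal{L}g,g\rangle=0$ forces $\Psi g=0$ a.e. on $\{|v-u|^rb_0>0\}$, i.e. $v\mapsto g(v)/\sqrt{\mu(v)}$ is a summational collision invariant; by the classical characterization of collision invariants this means $g/\sqrt{\mu}\in\mathrm{span}\{1,v_1,v_2,v_3,|v|^2\}$, that is $g\in\mathscr{N}$, while conversely any such $g$ makes the bracket vanish. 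Since $\mathcal{L}$ is self-adjoint and nonnegative on $L^2_v$, $\ker\mathcal{L}=\{g:\langle\mathcal{L}g,g\rangle=0\}$, which yields $\mathcal{L}g=0\iff g=\mathbf{P}g$ with $\mathbf{P}$ the $L^2_v$-orthogonal projection onto the five-dimensional space $\mathscr{N}$.

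For the coercivity I would use Grad's decomposition $\mathcal{K}g(v)=\int_{\mathbb{R}^3}\mathbf{k}(v,v')g(v')\,dv'$ together with the classical pointwise bounds on $\mathbf{k}$ (Grad's estimates for $r\in[0,1]$, and their soft-potential refinements, e.g. Caflisch/Guo/Strain, for $r\in(-3,0)$). These bounds give two facts: (i) $\mathcal{K}:L^2_v\to L^2_v$ is compact; and (ii) for every $\eta>0$ there is $R_\eta>0$ with $\langle\mathcal{K}g,g\rangle\le\eta|g|_\nu^2+C_\eta\|\mathbf{1}_{\{|v|\le R_\eta\}}g\|^2$. Writing $g=\{\mathbf{I}-\mathbf{P}\}g$ and using $\langle\mathcal{L}g,g\rangle=|g|_\nu^2-\langle\mathcal{K}g,g\rangle$, fact (ii) gives $\langle\mathcal{L}g,g\rangle\ge(1-\eta)|g|_\nu^2-C_\eta\|\mathbf{1}_{\{|v|\le R_\eta\}}g\|^2$. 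On the other hand, a weak-compactness/contradiction argument on the bounded velocity region — using $g\perp\mathscr{N}$, the self-adjointness and nonnegativity of $\mathcal{L}$, and $\ker\mathcal{L}=\mathscr{N}$ from the first part, so that a normalized minimizing sequence would produce a nonzero element of $\mathscr{N}$ orthogonal to $\mathscr{N}$ — yields $\langle\mathcal{L}g,g\rangle\ge c_\eta\|\mathbf{1}_{\{|v|\le R_\eta\}}g\|^2$. Taking a convex combination of these two lower bounds and then choosing $\eta$ small produces $\langle\mathcal{L}g,g\rangle\ge\delta\,|\{\mathbf{I}-\mathbf{P}\}g|_\nu^2$ with $\delta>0$, uniformly over $r\in(-3,1]$.

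The only substantial input is the coercivity, and within it the very soft range $r\in(-3,0)$: there $\nu(v)\sim\langle v\rangle^r$ degenerates at infinity and $\mathbf{k}$ decays only algebraically, so neither the unweighted spectral gap nor a naive compactness argument survives, and one must import the refined soft-potential kernel bounds and the splitting $\langle\mathcal{K}g,g\rangle\le\eta|g|_\nu^2+C_\eta\|\mathbf{1}_{\{|v|\le R_\eta\}}g\|^2$ from the literature rather than reprove them. Everything else — the Carleman symmetrization, the identification of $\mathscr{N}$, and the compactness step on bounded velocities — is routine, and none of it is affected by the ionic coupling.
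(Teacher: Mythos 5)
Your proposal is correct and coincides with what the paper does: the paper offers no proof of this lemma beyond citing Guo's works (\cite{Guo2002,Guo-soft-2003}), and your argument --- the Hilbert/Grad symmetrization for symmetry, nonnegativity and the identification of $\mathscr{N}$ via collision invariants, followed by the $\nu$-weighted coercivity obtained from the kernel estimates, the splitting $\langle\mathcal{K}g,g\rangle\leq\eta|g|_\nu^2+C_\eta\|\mathbf{1}_{\{|v|\leq R_\eta\}}g\|^2$ and a compactness/contradiction step using $\ker\mathcal{L}=\mathscr{N}$ --- is exactly the standard proof contained in those references, valid uniformly for $r\in(-3,1]$. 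The only cosmetic remark is that your two-step version (first proving $\langle\mathcal{L}g,g\rangle\geq c_\eta\|\mathbf{1}_{\{|v|\leq R_\eta\}}g\|^2$ on $\mathscr{N}^\perp$, then taking a convex combination) is slightly redundant compared with running the contradiction argument directly on the full inequality, but it is sound since the $\eta$-splitting already yields the $|\cdot|_\nu$-boundedness needed to extract the weak limit.
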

For the proofs, one can refer to \cite{Guo2002,Guo-soft-2003}.
\begin{lemma}\label{K}
For the hard-sphere collision kernel and the multi-index $\beta$, the collision frequency $\nu(v)$ satisfies
\begin{align}\label{nu-derivative}
 \partial_\beta \nu(v)\leq C (1+|v|)^{1-|\beta|}.
\end{align}
In addition, for any $\eta>0$, there exists a $C_\eta>0$ such that
\begin{align}
\langle \partial_\beta [\nu g], \partial_\beta g\rangle
&\,\geq |\partial_\beta g|_\nu^2 -\eta \sum_{|\beta_1|\leq |\beta|}|\partial_{\beta_1} g|_0^2-C_\eta | g|_0^2, \,\,\,\beta\neq 0,\label{nu-coercive}\\
|\langle \partial_\beta [ \mathcal{K} g_1], \partial_\beta g_2\rangle|
&\,\leq \bigg\{\eta \sum_{|\beta_1|\leq |\beta|}|\partial_{\beta_1}g_1|_0+C_\eta |g_1|_0\bigg\}|\partial_\beta g_2|_0.\label{K-estimate}
\end{align}
\end{lemma}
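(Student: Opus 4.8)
The plan is to establish the three assertions \eqref{nu-derivative}, \eqref{nu-coercive} and \eqref{K-estimate} in order, following the classical scheme for the linearized Boltzmann operator (cf. \cite{Guo2002,Guo-soft-2003}), the soft-potential range $r\in(-3,0)$ being the source of all the real difficulty. \emph{For \eqref{nu-derivative}:} I would differentiate under the integral sign in $\nu(v)=c\int_{\mathbb R^3}|v-u|^r\mu(u)\,du$. A direct computation shows that $\partial_v^\beta|v-u|^r$ is a finite linear combination of terms $|v-u|^{r-2k}Q_k(v-u)$ with $Q_k$ a monomial of degree $2k-|\beta|$, whence $\big|\partial_v^\beta|v-u|^r\big|\le C|v-u|^{r-|\beta|}$. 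Then split the $u$-integral at $|v-u|=\langle v\rangle/2$. On $\{|v-u|>\langle v\rangle/2\}$ the singularity is inert: $|v-u|^{r-|\beta|}\le C\langle v\rangle^{r-|\beta|}$ whenever $r-|\beta|\le0$ (the only remaining case, $\beta=0$ with $r>0$, is the standard bound $\nu\lesssim\langle v\rangle^r$ via $|v-u|\le\langle v\rangle+\langle u\rangle$), and $\int\mu\,du<\infty$. On $\{|v-u|\le\langle v\rangle/2\}$ one cannot afford $|v-u|^{r-|\beta|}$ once $r-|\beta|\le-3$; instead use $\partial_v^\beta|v-u|^r=(-1)^{|\beta|}\partial_u^\beta|v-u|^r$ and integrate by parts in $u$, moving all derivatives onto $\mu$. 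Since $|v-u|^r\in L^1_{\rm loc}$ (as $r>-3$) while on that region $|u|\gtrsim\langle v\rangle$, so $|\partial_u^\beta\mu(u)|\lesssim\langle u\rangle^{|\beta|}e^{-|u|^2/2}\le Ce^{-c\langle v\rangle^2}$, this contribution together with the boundary terms on $\{|v-u|=\langle v\rangle/2\}$ is exponentially small in $\langle v\rangle$, hence $\le C\langle v\rangle^{r-|\beta|}$; for $v$ in a bounded set one only needs finiteness, which is immediate.

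\emph{For \eqref{nu-coercive}:} expand $\partial_\beta[\nu g]=\sum_{\beta_1\le\beta}\binom{\beta}{\beta_1}(\partial_{\beta_1}\nu)(\partial_{\beta-\beta_1}g)$; the term $\beta_1=0$ is exactly $|\partial_\beta g|_\nu^2$. For $\beta_1\neq0$, \eqref{nu-derivative} gives $|\partial_{\beta_1}\nu(v)|\le C\langle v\rangle^{r-|\beta_1|}\le C\langle v\rangle^{r-1}$, a full factor $\langle v\rangle^{-1}$ smaller than $\nu\sim\langle v\rangle^r$; this is the only gain available. I would then split $\mathbb R^3_v$ into a large ball $B_R=\{|v|\le R\}$ and its complement, with $R=R(\eta)$. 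On $B_R^c$ one has $|\partial_{\beta_1}\nu|\le CR^{-1}\nu$, so Cauchy--Schwarz gives a contribution $\le CR^{-1}\big(|\partial_{\beta-\beta_1}g|_\nu^2+|\partial_\beta g|_\nu^2\big)$, which is $\le\eta(\cdots)$. On $B_R$ the weight is comparable to $1$ and $\partial_{\beta_1}\nu$ is bounded; Cauchy--Schwarz with the equivalence of $|\cdot|$ and $|\cdot|_\nu$ on $B_R$ bounds this piece by $\eta|\partial_\beta g|_\nu^2$ plus $C_\eta$ times strictly-lower-order weighted norms $|\partial_{\beta-\beta_1}g|_\nu^2$, which feed into the $\eta$-sum over $|\beta_1|\le|\beta|$ and, at the bottom of the hierarchy, into $C_\eta|g|_\nu^2$. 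This yields \eqref{nu-coercive}.

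\emph{For \eqref{K-estimate}:} write $\mathcal K=\mathcal K_2-\mathcal K_1$ as an integral operator, $\mathcal Kg(v)=\int(k_2-k_1)(v,u)g(u)\,du$, with $k_1(v,u)=c\,|v-u|^r\mu^{1/2}(u)\mu^{1/2}(v)$ and $k_2$ obtained from the Grad change of variables $(u,\omega)\mapsto(u',v')$. Differentiating in $v$ and integrating by parts in $u$ whenever a derivative would fall on the singular factor $|v-u|^r$ (transferring it onto the Gaussians and onto $g_1$), the estimate reduces to controlling $\sum_{|\beta_1|\le|\beta|}\iint\mathbf k_{\beta_1}(v,u)\,|\partial_{\beta_1}g_1(u)|\,|\partial_\beta g_2(v)|\,du\,dv$, where each $\mathbf k_{\beta_1}$ obeys the classical pointwise bounds — an integrable $|v-u|$-singularity times $\exp\{-c|v-u|^2-c(|v|^2-|u|^2)^2/|v-u|^2\}$, see \cite{Guo2002,Guo-soft-2003} — so that after integration in $u$ the associated operator carries a $\langle v\rangle^{-1}$-type gain relative to $\nu$. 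The split into $\eta$- and $C_\eta$-parts is then performed exactly as in Step~2: outside a large velocity ball the $\nu$-weighted operator norm of $\mathbf k_{\beta_1}$ is $o_R(1)$, while inside the ball $\mathbf k_{\beta_1}$ acts boundedly between unweighted $L^2$ spaces (weights comparable to $1$), and Cauchy--Schwarz with the absorbing parameter produces $\eta\sum_{|\beta_1|\le|\beta|}|\partial_{\beta_1}g_1|_\nu+C_\eta|g_1|_\nu$ multiplying the common factor $|\partial_\beta g_2|_\nu$.

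\emph{Main obstacle.} The genuine difficulty is treating the soft potentials $r\in(-3,0)$ uniformly with the hard ones. First, the differentiated kernels acquire the non--locally-integrable singularity $|v-u|^{r-|\beta|}$, tamed only by the integration-by-parts trick above, which exploits the smoothness and Gaussian smallness of $\mu$ near $u=v$ once $|v|$ is large. Second, because $\nu(v)\sim\langle v\rangle^r\to0$ as $|v|\to\infty$, one cannot compare $|\cdot|$ with $|\cdot|_\nu$ globally, so every absorption must be localized to a large ball $B_R$ (where the two norms are equivalent) with $B_R^c$ absorbed through the intrinsic smallness $|\partial_{\beta_1}\nu|\lesssim R^{-1}\nu$ and the analogous decay of the $\mathcal K$-kernel. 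For hard potentials $r\in[0,1]$ the weight is non-degenerate and the whole argument reduces to the familiar Grad-type computation with no velocity truncation needed.
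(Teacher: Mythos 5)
Your proof of \eqref{nu-derivative} is sound and is essentially the classical argument (the paper simply cites \cite{Guo2002L} for this point), and your Leibniz expansion plus large-velocity splitting reproduces part of what is needed for \eqref{nu-coercive}. The genuine gap lies in where the factor $\eta$ in front of the \emph{derivative} terms is supposed to come from. In \eqref{nu-coercive} and \eqref{K-estimate} only the zeroth-order norm $|g|_\nu$ (resp.\ $|g_1|_\nu$) is allowed a large constant $C_\eta$; every term carrying a $v$-derivative must appear with an arbitrarily small coefficient $\eta$. Your ball splitting delivers smallness only on $\{|v|\ge R\}$, where indeed $|\partial_{\beta_1}\nu|\le CR^{-1}\nu$. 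But on $B_R$ your Cauchy--Schwarz leaves terms of the form $C_\eta\,\|\partial_{\beta-\beta_1}g\|^2_{L^2(B_R)}$ with $0<|\beta-\beta_1|<|\beta|$, and such a term cannot be ``fed into the $\eta$-sum'': its coefficient is large, not small. Closing this requires an extra ingredient, namely an Ehrling-type compact interpolation on the bounded ball, $\|\partial_{\tilde\beta}g\|_{L^2(B_R)}\le\eta'\sum_{|\beta'|=|\beta|}\|\partial_{\beta'}g\|_{L^2(B_R)}+C_{\eta'}\|g\|_{L^2(B_R)}$, combined with the equivalence of $|\cdot|$ and $|\cdot|_\nu$ on $B_R$; this is exactly the ``compact interpolation between $\partial_\beta$ and $\partial_0$'' that the paper invokes, and without it your bound is not of the stated form.

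The same defect is fatal, not merely incomplete, in your Step 3 for \eqref{K-estimate}. After reducing $\partial_\beta[\mathcal{K}g_1]$ to kernels $\mathbf{k}_{\beta_1}$ acting on $\partial_{\beta_1}g_1$, the inside-ball operator norms of these kernels are $O(1)$ and do not shrink as $R$ grows; in particular the top-order term $\beta_1=\beta$ comes out as $C\,|\partial_\beta g_1|_\nu\,|\partial_\beta g_2|_\nu$ with a fixed constant, and no interpolation can lower its order, so the velocity-ball splitting cannot produce the required $\eta\sum_{|\beta_1|\le|\beta|}|\partial_{\beta_1}g_1|_\nu$. The smallness in the proofs the paper relies on (Lemma 4 of \cite{Guo-soft-2003} for $r\in(-3,0)$ and Lemma 2.1 of \cite{Duan2012} for $r\in[0,1]$) comes from a different mechanism: one cuts the kernel near the singular set $\{|v-u|\le\epsilon\}$, whose ($\nu$-weighted) operator norm is $o_\epsilon(1)$, and this small piece is the only one allowed to keep derivatives on $g_1$, furnishing the $\eta$-terms; on $\{|v-u|\ge\epsilon\}$ the kernel is smooth, so one integrates by parts to remove \emph{all} derivatives from $g_1$, paying only $C_\epsilon=C_\eta$ on $|g_1|_\nu$. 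You would need to replace the large-velocity argument by this near-diagonal cutoff (or simply quote the cited lemmas, as the paper does) for \eqref{K-estimate}, and add the compact-interpolation step for \eqref{nu-coercive}.
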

\begin{proof}
The proof of \eqref{nu-derivative} can be found in   \cite{Guo2002L}.
By using \eqref{nu-derivative} and compact interpolation between $\partial_\beta$ and $\partial_0$, we have
\begin{align*}
  \langle \partial_\beta [\nu g], \partial_\beta g\rangle
  =\,&|\partial_\beta g|_\nu^2+\sum_{0< |\beta_1|\leq |\beta|}C_\beta^{\beta_1}\langle \partial_{\beta_1}\nu \partial_{\beta-\beta_1}g, \partial_\beta g\rangle\\
  \geq\, & |\partial_\beta g|_\nu^2-\eta \sum_{|\beta_1|\leq |\beta|}|\partial_{\beta_1} g|_0^2-C_\eta | g|_0^2
\end{align*}
for any $\eta>0$, thus \eqref{nu-coercive} holds. For the proof of \eqref{K-estimate},  note that
\begin{align*}
  |\partial_\beta [\mathcal{K}g]|_0\leq \eta \sum_{|\beta_1|=|\beta|}|\partial_{\beta_1}g|_0+C_\eta |g|_0,
\end{align*}
which was presented in Lemma 2.1 in \cite{Duan2012}. Thus \eqref{K-estimate} is valid.
\end{proof}

\begin{lemma}\label{Gamma}
Let the indices $\beta_0+\beta_1+\beta_2=\beta$, $\gamma_1+\gamma_2=\gamma$. It holds
\begin{align*}
  \partial_\beta^\gamma \Gamma(g_1,g_2)=\sum C_\beta^{\beta_0 \beta_1 \beta_2} C_\gamma^{\gamma_1 \gamma_2} \Gamma^0 (\partial_{\beta_1}^{\gamma_1}g_1, \partial_{\beta_2}^{\gamma_2}g_2),
\end{align*}
where
\begin{align*}
  \Gamma^0 (\partial_{\beta_1}^{\gamma_1}g_1, \partial_{\beta_2}^{\gamma_2}g_2)=\,&\int\!\!\int_{\mathbb{R}^3 \times \mathbb{S}^2} |(v-u)\cdot \omega|\partial_{\beta_0}[\sqrt{\mu(u)}]\partial_{\beta_1}^{\gamma_1}g_1(u') \partial_{\beta_2}^{\gamma_2}g_2(v')dud\omega\\
  &-\int\!\!\int_{\mathbb{R}^3 \times \mathbb{S}^2} |(v-u)\cdot \omega|\partial_{\beta_0}[\sqrt{\mu(u)}]\partial_{\beta_1}^{\gamma_1}g_1(u) \partial_{\beta_2}^{\gamma_2}g_2(v)dud\omega\\
  =:\,&\Gamma^0_{\rm{gain}}-\Gamma^0_{\rm{loss}}.
\end{align*}
Moreover, for $|\beta|+|\gamma|\leq N (N\geq 4)$, we have
\begin{align}\label{Gamma-derivative}
&\left|\left( \Gamma^0 (\partial_{\beta_1}^{\gamma_1}g_1, \partial_{\beta_2}^{\gamma_2}g_2), \partial_\beta^\gamma g_3\right)\right|\nonumber\\
&\,\quad \leq C \bigg\{ \sum_{|\beta_k|+|\gamma_k|\leq N}\|\partial_{\beta_k}^{\gamma_k}g_1\|\bigg\}
\|\partial_{\beta_2}^{\gamma_2}g_2\|_\nu \|\partial_{\beta}^{\gamma}g_3\|_\nu
+C \bigg\{ \sum_{|\beta_k|+|\gamma_k|\leq N}\|\partial_{\beta_k}^{\gamma_k}g_1\|_\nu\bigg\}
\|\partial_{\beta_2}^{\gamma_2}g_2\| \|\partial_{\beta}^{\gamma}g_3\|_\nu\nonumber\\
&\qquad\, +  C \bigg\{ \sum_{|\beta_k|+|\gamma_k|\leq N}\|\partial_{\beta_k}^{\gamma_k}g_2\|\bigg\}
\|\partial_{\beta_1}^{\gamma_1}g_1\|_\nu \|\partial_{\beta}^{\gamma}g_3\|_\nu
+C \bigg\{ \sum_{|\beta_k|+|\gamma_k|\leq N}\|\partial_{\beta_k}^{\gamma_k}g_2\|_\nu\bigg\}
\|\partial_{\beta_1}^{\gamma_1}g_1\| \|\partial_{\beta}^{\gamma}g_3\|_\nu.
\end{align}
Here, $( \cdot, \cdot)$ is the inner product in $L^2(\mathbb{T}_x^3\times \mathbb{R}^3_v)$.

%Let $g_k(x,v)$, $k=1,2,3$, be smooth functions, and $(i,j)\in \{(1,2), (2,1)\}$. Then we have
%\begin{align*}
%&|\langle \partial_\beta \Gamma (g_1,g_2),g_3\rangle|\leq C \sum_{\beta_1+\beta_2=\beta}\sum_{(i,j)}\int_{\mathbb{T}^3}
 % \left[\int \nu |\partial_{\beta_1}g_i|^2 dv\right]^{1/2}
 % \left[\int  |\partial_{\beta_2}g_j|^2 dv\right]^{1/2}
  %\left[\int \nu g_3^2 dv\right]^{1/2}
 % dx,\\
%& \left\|\int \Gamma (g_1,g_2)g_3 dv\right\|\leq C \sup_{x,v}\{|\nu^3g_3|\}\sup_x\left[\int |g_i(x,v)|^2 dv\right]^{1/2}\|g_j\|,\\
%& \|\Gamma(g_1,g_2)g_3\|\leq C \sup_{x,v}\{|\nu g_3|\}\sup_x\left[\int |g_i(x,v)|^2 dv\right]^{1/2}\|g_j\|.
%\end{align*}

\end{lemma}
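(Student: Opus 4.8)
\emph{Strategy.} I would first establish the stated Leibniz-type identity for $\partial_\beta^\gamma\Gamma$, and then prove \eqref{Gamma-derivative}, treating the hard potential range $r\in[0,1]$ and the soft range $r\in(-3,0)$ separately; only the latter is genuinely delicate. The spacetime derivatives $\partial^\gamma=\partial_t^{\gamma_0}\partial_{x_1}^{\gamma_1}\partial_{x_2}^{\gamma_2}\partial_{x_3}^{\gamma_3}$ act only on $g_1,g_2$, since $|u-v|^rb_0(\theta)$ and $\mu$ are $(t,x)$-independent, so $\partial^\gamma$ distributes by the ordinary Leibniz rule and produces the sum over $\gamma_1+\gamma_2=\gamma$. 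For the velocity derivatives $\partial_\beta$ I would render the $v$-dependence of the gain term explicit by the translation $u\mapsto z:=u-v$ together with the geometric identities $u'=v+z_\perp$, $v'=v+z_\parallel$, where $z_\parallel=(z\cdot\omega)\omega$ and $z_\perp=z-z_\parallel$, so that
\[
\Gamma_{\rm gain}(g_1,g_2)(v)=\int\!\!\int_{\mathbb{R}^3\times\mathbb{S}^2}|z|^rb_0(\theta)\,\sqrt{\mu}(v+z)\,g_1(v+z_\perp)\,g_2(v+z_\parallel)\,d\omega dz .
\]
Now $\partial_\beta$ falls by Leibniz onto the three factors, giving the splitting $\beta=\beta_0+\beta_1+\beta_2$ with $\partial_{\beta_0}\sqrt\mu$ evaluated at $v+z$; undoing $z\mapsto u$ (a translation, Jacobian one) turns this factor into $\partial_{\beta_0}[\sqrt{\mu(u)}]$ and reproduces $\Gamma^0_{\rm gain}$. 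The loss term is handled identically with $g_1,g_2$ evaluated at $u,v$, and since $\partial_\beta$ commutes with $\partial^\gamma$, combining the two computations yields the asserted formula.

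\emph{The bound for $r\in[0,1]$.} Abbreviate $h_1=\partial_{\beta_1}^{\gamma_1}g_1$, $h_2=\partial_{\beta_2}^{\gamma_2}g_2$, $h_3=\partial_\beta^\gamma g_3$. I would use the pointwise bound $|\partial_{\beta_0}\sqrt{\mu(u)}|\le C\mu^{1/4}(u)$, the elementary inequality $|u-v|^r\le C\langle u\rangle^r\langle v\rangle^r$ (valid for $r\ge0$), the collisional energy identity $|u'|^2+|v'|^2=|u|^2+|v|^2$ (so $\langle u'\rangle,\langle v'\rangle\le C\langle u\rangle\langle v\rangle$), and $\nu(v)\sim\langle v\rangle^r$, to dominate the $\Gamma^0$-integrand by a constant times $\mu^{1/8}(u)$ times a product of $\nu$-weighted absolute values of $h_1(u')$, $h_2(v')$, $h_3(v)$, the surplus Gaussian in $u$ absorbing all velocity polynomials and powers $\langle u\rangle^r$. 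Cauchy--Schwarz in $(x,v,u,\omega)$, together with the Jacobian-one change of variables $(u,v)\leftrightarrow(u',v')$, then produces $\|h_3\|_\nu=\|\partial_\beta^\gamma g_3\|_\nu$ and a product of two of $\{\|h_1\|,\|h_1\|_\nu,\|h_2\|,\|h_2\|_\nu\}$; since $(|\beta_1|+|\gamma_1|)+(|\beta_2|+|\gamma_2|)\le|\beta|+|\gamma|\le N$, at least one of $h_1,h_2$ carries at most $N-2$ derivatives and may be placed in $L^\infty_xL^2_v$ via the Sobolev embedding $H^2_x\hookrightarrow L^\infty_x$, whence bounded by $\sum_{|\beta_k|+|\gamma_k|\le N}\|\partial_{\beta_k}^{\gamma_k}g_i\|$. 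The two choices for which of $g_1,g_2$ is sent to $L^\infty_x$ and the freedom for which remaining factor carries the leftover $\nu$-weight give precisely the four terms on the right of \eqref{Gamma-derivative}; here $N\ge4$ suffices and one invokes Remark \ref{remark}.

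\emph{The soft range $r\in(-3,0)$: the main obstacle.} Now $|u-v|^r$ is singular on $\{u=v\}$ and the bound $|u-v|^r\le C\langle v\rangle^r$ fails, so I would split the $u$-integration into $\{|u-v|\ge1\}$, where $|u-v|^r\le1$ and the hard-potential argument above applies unchanged (with even better decay), and $\{|u-v|\le1\}$, where $\langle v\rangle\sim\langle u\rangle\sim\langle u'\rangle\sim\langle v'\rangle$ so the $\nu$-weights are harmless, but the integrable singularity $|u-v|^r$ (integrable in $\mathbb{R}^3$ since $r>-3$) has to be absorbed by Hölder's inequality in $v$ (or $u$) against $L^p_v$-norms of the factors; converting those back to $L^2_v$ costs extra velocity derivatives, and controlling the resulting $L^\infty_xL^p_v$-type quantities by $\vvvert\cdot\vvvert$ costs extra $x$-derivatives through the embeddings $W^{1,2}\hookrightarrow L^6$ and $H^2\hookrightarrow L^\infty$ in dimension three. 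Tracking both losses simultaneously is exactly what forces $N\ge8$. I would follow the bookkeeping of Guo \cite{Guo-soft-2003} and Duan--Yang--Zhao \cite{Duan2012,Duan2013} (and \cite{Zhao2017} for very soft potentials), the point being that the ionic collision operator is structurally identical to the classical Boltzmann one, so no new ideas beyond those references are needed here — the ion-specific difficulties enter only through the Poisson--Poincar\'e coupling treated in Section \ref{ellip-section}. This singular-kernel estimate together with its derivative counting is the one delicate step of the lemma; the differentiation formula and the hard-potential bound are routine.
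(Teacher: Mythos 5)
Your Leibniz identity and your hard-potential argument are essentially the paper's: the same pointwise kernel bounds, Cauchy--Schwarz, the pre/post-collisional change of variables $du\,dv=du'\,dv'$, and placing whichever of $g_1,g_2$ carries at most $N/2$ derivatives in $L^\infty_x L^2_v$ via $H^2(\mathbb{T}^3)\hookrightarrow L^\infty(\mathbb{T}^3)$ (your appeal to Remark \ref{remark} there is superfluous but harmless). The gap is in the soft range $r\in(-3,0)$, which is the actual content of the lemma. Your claim that on $\{|u-v|\ge 1\}$ ``the hard-potential argument applies unchanged'' fails for the gain term: that argument converts the kernel weight through $\langle v\rangle^r\le C(\langle u'\rangle^r+\langle v'\rangle^r)$, which is valid only for $r\ge 0$. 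For $r<0$ one needs $\langle u'\rangle,\langle v'\rangle\lesssim\langle v\rangle$ in order to move the $\nu$-weight onto $\partial_{\beta_1}^{\gamma_1}g_1(u')$ or $\partial_{\beta_2}^{\gamma_2}g_2(v')$, and this holds only when $|u|\lesssim|v|$ (take $v$ bounded and $|u|$ large, still in your far region: then $\langle v\rangle^r\sim 1$ while $\langle u'\rangle^r+\langle v'\rangle^r$ can be arbitrarily small). One cannot simply discard the weight either, since for $r<0$ an unweighted product $\|\partial_{\beta_1}^{\gamma_1}g_1\|\,\|\partial_{\beta_2}^{\gamma_2}g_2\|$ is strictly weaker than the right-hand side of \eqref{Gamma-derivative}, which demands a $\nu$-weight on one of the two factors. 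This is exactly why the paper organizes the soft gain term around the split $\{|u|\ge|v|/2\}$ (where the Gaussian in $u$ produces Gaussian decay in $v$, so no weight is needed) versus $\{|u|\le|v|/2\}$ (where $|u'|+|v'|\le C|v|$, hence $\langle v\rangle^r\le C\min\{\langle u'\rangle^r,\langle v'\rangle^r\}$, with a further subcase $|v|\le 1$), and a corresponding dichotomy for the loss term; your split on $|u-v|$ against $1$ does not by itself decide which of the four terms of \eqref{Gamma-derivative} you land on.

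Second, the derivative counting behind $N\ge 8$ is asserted rather than derived, and the mechanism you sketch (H\"older in $v$ with unspecified exponents, then $W^{1,2}\hookrightarrow L^6$ and $H^2_x\hookrightarrow L^\infty_x$ separately) is not the one used here and is not carried far enough to check. In the paper the soft case is closed by putting the factor with at most $N/2$ total derivatives in $L^\infty$ of the \emph{joint} variables through $H^4(\mathbb{T}^3\times\mathbb{R}^3)\subset L^\infty(\mathbb{T}^3\times\mathbb{R}^3)$, while the singularity is absorbed by $\int_{\mathbb{R}^3}|u-v|^re^{-|u|^2/8}\,du\le C\langle v\rangle^r$ for the loss term and by the collisional change of variables for the gain term; the requirement $N/2+4\le N$ is precisely where $N\ge 8$ enters. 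Deferring ``the bookkeeping'' to Guo and Duan--Yang--Zhao does not close this, since those works use different weighted frameworks and the specific four-term structure of \eqref{Gamma-derivative} with this threshold must be verified, which is what the paper's case analysis does. So the proposal is fine for the differentiation formula and for $r\in[0,1]$, but the soft-potential estimate --- the only genuinely delicate part --- is left open, and one of the steps you do indicate for it would fail as stated.
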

\begin{remark}\label{gamma-zero}
For $|\beta|+|\gamma|=0$, it holds that $|\beta_1|+|\gamma_1|=0$ and $|\beta_2|+|\gamma_2|=0$. Then we can deduce from \eqref{Gamma-derivative} that
\begin{align*}
&\left|\left( \Gamma (g_1, g_2), g_3\right)\right|
  \leq C \bigg\{ \sum_{|\gamma|\leq 2}\|\partial^{\gamma}g_1\|\bigg\}
\|g_2\|_\nu \|g_3\|_\nu
+C \bigg\{ \sum_{|\gamma|\leq 2}\|\partial^{\gamma}g_1\|_\nu\bigg\}
\|g_2\| \|g_3\|_\nu,
\end{align*}
or
\begin{align*}
&\left|\left( \Gamma (g_1, g_2), g_3\right)\right| \leq C \bigg\{ \sum_{|\gamma|\leq 2}\|\partial^{\gamma}g_2\|\bigg\}
\|g_1\|_\nu \|g_3\|_\nu
+C \bigg\{ \sum_{|\gamma|\leq 2}\|\partial^{\gamma}g_2\|_\nu\bigg\}
\|g_1\| \|g_3\|_\nu.
\end{align*}
\end{remark}
\medskip
\begin{proof}[Proof of Lemma \ref{Gamma}]
In light of Lemma 2.3 in Guo \cite{Guo2002}, we obtain that
\begin{align}\label{G-ineqn}
&\big|\big\langle \Gamma^0 (\partial_{\beta_1}^{\gamma_1}g_1, \partial_{\beta_2}^{\gamma_2}g_2), \partial_\beta^\gamma g_3\big\rangle\big| \leq C |\partial_{\beta_1}^{\gamma_1}g_1|_\nu
|\partial_{\beta_2}^{\gamma_2}g_2|_0 |\partial_{\beta}^{\gamma}g_3|_\nu
+C |\partial_{\beta_2}^{\gamma_2}g_2|_\nu
|\partial_{\beta_1}^{\gamma_1}g_1|_0 |\partial_{\beta}^{\gamma}g_3|_\nu.
\end{align}
% Here, $\langle \cdot, \cdot\rangle$ is the inner product in $L^2(\mathbb{R}^3_v)$.
Recalling $H^2(\mathbb{T}^3)\subset L^\infty(\mathbb{T}^3)$, we have
\begin{gather*}
   \sup_x \int_{\mathbb{R}^3}|g(x,u)|^2 du\leq \int_{\mathbb{R}^3}\sup_x|g(x,u)|^2 du \leq C \sum_{|\gamma|\leq 2} \|\partial^\gamma g\|^2,\\
   \sup_x \int_{\mathbb{R}^3}\nu(u)|g(x,u)|^2 du\leq C \sum_{|\gamma|\leq 2} \|\partial^\gamma g\|_\nu^2.
\end{gather*}
By using the above embedding, we take the $L^\infty(\mathbb{T}^3)$ norm of $g_1$ or $g_2$ terms on the right-hand side of \eqref{G-ineqn}, specifically selecting the term with the lower total derivative order, i.e.,  $|\beta_i|+|\gamma_i| \leq N/2$ for $i=1,2$. 
Integrating   \eqref{G-ineqn} over $x\in \mathbb{T}^3$ yields \eqref{Gamma-derivative}.
\end{proof}

\begin{lemma}\label{LGamma}
We have
\begin{align}
\left|\left( \Gamma(f,g), h \right)\right|\leq  \sup_{x,v} |\mu^{-1/4}h|\|f\| \|g\|.
\end{align}
\end{lemma}
\begin{proof}
It is easy to know that
\begin{align*}
 \left|\langle \Gamma_{\rm{gain}}(f,g),h \rangle\right|
\leq\,& \int\!\!\int_{\mathbb{R}^3\times \mathbb{R}^3}|u-v| \mu^{1/2}(u)f(u')g(v')h(v)dudv\\
\leq\,& \sup_v |\mu^{-1/4}h| \int\!\!\int_{\mathbb{R}^3\times \mathbb{R}^3}|u-v| \mu^{1/4}(u)\mu^{1/4}(v)f(u)g(v)dudv\\
\leq\,& \sup_v |\mu^{-1/4}h| \left(\int\!\!\int_{\mathbb{R}^3\times \mathbb{R}^3}|u-v| \mu^{1/4}(u)\mu^{1/4}(v)f^2(u)dudv\right)^{1/2}\\
&\qquad \qquad\quad\times\left(\int\!\!\int_{\mathbb{R}^3\times \mathbb{R}^3}|u-v| \mu^{1/4}(u)\mu^{1/4}(v)g^2(v)dudv\right)^{1/2}\\
\leq\,& \sup_v |\mu^{-1/4}h| \left(\int_{\mathbb{R}^3}\langle u \rangle \mu^{1/4}(u)f^2(u)du\right)^{1/2}\left(\int_{\mathbb{R}^3}\langle v \rangle \mu^{1/4}(v)g^2(v)dv\right)^{1/2}\\
\leq\,& \sup_v |\mu^{-1/4}h| |f|_0 |g|_0.
\end{align*}
Integrating the above inequality over $\mathbb{T}^3$ yields
$\left|\left( \Gamma_{\rm{gain}}(f,g), h \right)\right|\leq  \sup_{x,v} |\mu^{-1/4}h|\|f\| \|g\|$.
Similarly, $\Gamma_{\rm{loss}}(\cdot,\cdot)$ also satisfies this estimate. The proof is complete.
\end{proof}
%\textcolor[rgb]{1.00,0.00,0.00}{Remark: I  changed  " This completes the proof of the lemma. " to " Hence the proof is completed.  "}

\section{$L^2$ Estimates of the Poisson-Poincar\'{e} Equation \eqref{p-p-eqn} in $\mathbb{T}^3$}\label{ellip-section}
In this section, we investigate the well-posedness, regularity and quantitative estimates of the Poisson-Poincar\'{e} equation:
\begin{align}\label{PPE}
\Delta U=e^U-\rho\quad \rm{in}\,\,\,\mathbb{T}^3.
\end{align}
For the well-posedness of \eqref{PPE}, we have
\begin{theorem}\label{solution-PPE}
Suppose that $\rho\in L^2(\mathbb{T}^3)$. Then, there exists a unique solution $U\in H^2(\mathbb{T}^3)$ to \eqref{PPE}. Moreover, denoting $\sup_{t\geq 0}\|\rho-1\|_{L^2(\mathbb{T}^3)}=M$, it holds that
$\|U\|_{H^2(\mathbb{T}^3)}\leq CM+C e^{2CM}+C$, where $C>0$ is a constant independent of $U$ and $\rho$.
\end{theorem}

\begin{proof}
We split $U=\overline{U}+\widehat{U}$ such that \eqref{PPE} is decomposed into two equations:
\begin{equation}\label{PPE1}
\left\{ \begin{aligned}
&\Delta \overline{U}=1-\rho \quad \rm{in} \,\,\mathbb{T}^3,\\
&(\overline{U})_{\mathbb{T}^3}
=\frac{1}{|\mathbb{T}^3|}\int_{\mathbb{T}^3}\overline{U}dx=0,
\end{aligned}
\right.
\end{equation}
and
\begin{align}\label{PPE2}
 \Delta \widehat{U}=e^{\overline{U}+\widehat{U}}-1 \quad \rm{in} \,\,\mathbb{T}^3.
\end{align}

First, for the equation \eqref{PPE1},
when $\rho-1\in L^2(\mathbb{T}^3)$, there exists a unique solution $\overline{U}\in H^2(\mathbb{T}^3)$ such that
\begin{align}\label{barU}
  \|\overline{U}\|_{H^2(\mathbb{T}^3)}\leq C \|\rho-1\|_{L^2(\mathbb{T}^3)},
\end{align}
where $C>0$ is a constant independent of $\overline{U}$ and $\rho$.

Second, we consider the equation \eqref{PPE2}
with $\overline{U}\in H^2(\mathbb{T}^3)$.
In light of Proposition 3.5 of \cite{Iacobelli2021b}, where the direct method of calculus of variations is applied, 
there exists a unique weak solution $\overline{U}\in H^1(\mathbb{T}^3)$ to \eqref{PPE2}.

To improve the regularity of $\widehat{U}$, we now deduce that
$e^{\widehat{U}}\in L^2(\mathbb{T}^3)$.
Actually, for $\widehat{U}\in H^1(\mathbb{T}^3)$, consider the truncated function $\widehat{U}_k:=\min\{\widehat{U}, k\}$, $k\in \mathbb{N}$. It holds that
$e^{\widehat{U}_k}\in L^\infty(\mathbb{T}^3)$, $\nabla \widehat{U}\in L^2(\mathbb{T}^3)$ and $\nabla e^{\widehat{U}_k}=e^{\widehat{U}_k} \nabla \widehat{U}\chi_{\{\widehat{U}<k\}}\in L^2(\mathbb{T}^3)$.
Thus $e^{\widehat{U}_k}\in L^\infty \cap H^1(\mathbb{T}^3)$. Using $e^{\widehat{U}_k}$ as a test function to the equation \eqref{PPE2}, we have
\begin{align*}
  0= \int_{\mathbb{T}^3}\left(\nabla \widehat{U}\cdot \nabla e^{\widehat{U}_k} +e^{\overline{U}+\widehat{U}}e^{\widehat{U}_k}-e^{\widehat{U}_k} \right)dx
  \geq   e^{-\|\overline{U}\|_{L^\infty}}\int_{\mathbb{T}^3} e^{\widehat{U}+\widehat{U}_k} dx-\int_{\mathbb{T}^3} e^{\widehat{U}_k}dx.
\end{align*}
By using the monotone convergence theorem, we take $k\rightarrow \infty$ to get
\begin{align*}
   \int_{\mathbb{T}^3} e^{\widehat{U}}dx\geq e^{-\|\overline{U}\|_{L^\infty}}\int_{\mathbb{T}^3} e^{2\widehat{U}} dx,
\end{align*}
which implies that
\begin{align*}
\|e^{\widehat{U}}\|_{L^2(\mathbb{T}^3)}\leq e^{\frac{1}{2}\|\overline{U}\|_{L^\infty}}
\|e^{\widehat{U}}\|^{\frac{1}{2}}_{L^1(\mathbb{T}^3)}.
\end{align*}
Meanwhile, we deduce from \eqref{PPE2}  that
 $$0=\int_{\mathbb{T}^3} \Delta \widehat{U}dx=\int_{\mathbb{T}^3}e^U-1 dx,$$
thus, $$\|e^{\widehat{U}}\|_{L^1(\mathbb{T}^3)} = \int_{\mathbb{T}^3}e^{\widehat{U}}dx\leq e^{\|\overline{U}\|_{L^\infty}}|\mathbb{T}^3|=e^{\|\overline{U}\|_{L^\infty}}$$
and 
\begin{align*}
\|e^{\widehat{U}}\|_{L^2(\mathbb{T}^3)}\leq e^{\frac{1}{2}\|\overline{U}\|_{L^\infty}}
\|e^{\widehat{U}}\|^{\frac{1}{2}}_{L^1(\mathbb{T}^3)}\leq e^{\|\overline{U}\|_{L^\infty}}.
\end{align*}
For the equation \eqref{PPE2}, there exists a constant $C>0$ independent of $\widehat{U}$ and $\overline{U}$ such that
\begin{align*}
  \|\widehat{U}\|_{H^2(\mathbb{T}^3)}\leq &\,C \|e^{\overline{U}+\widehat{U}}-1\|_{L^2(\mathbb{T}^3)}\\
  \leq &\, C \|e^{\widehat{U}}\|_{L^2(\mathbb{T}^3)} e^{\|\overline{U}\|_{L^\infty}}+C\\
  \leq &\, C e^{2\|\overline{U}\|_{L^\infty}}+C.
\end{align*}
Denote $\|\rho-1\|_{L^2(\mathbb{T}^3)}$ by $M$. By \eqref{barU}, we have $$\|\overline{U}\|_{L^\infty(\mathbb{T}^3)}\leq C\|\overline{U}\|_{H^2(\mathbb{T}^3)}\leq C M,$$
where $C>0$ is independent of $\overline{U}$ and $M$.
Hence we get
\begin{align*}
 \|U\|_{H^2(\mathbb{T}^3)}\leq \, \|\overline{U}\|_{H^2(\mathbb{T}^3)}+\|\widehat{U}\|_{H^2(\mathbb{T}^3)}
 \leq \, CM +C e^{2CM}+C,
\end{align*}
where $C>0$ is independent of $U$ and $\rho$. This completes the proof.
\end{proof}
\begin{remark}\label{remark3.2}
In Theorem \ref{solution-PPE}, if we further assume $\sup_{t\geq 0}\|\rho-1\|_{L^2(\mathbb{T}^3)}=M\ll 1$, then there exists a uniform upper bound $C>0$ independent of the solution $U$, such that $\sup_{t\geq 0}\|U\|_{L^\infty(\mathbb{T}^3)}\leq C$. It implies that $\sup_{t\geq 0}\|U\|_{L^\infty(\mathbb{T}^3)}=O(1)$, which is crucial for the quantitative estimates of $\|U\|_{H^2(\mathbb{T}^3)}$ as below.
\end{remark}

As a quantitative estimate, we can control $\|U\|_{H^2(\mathbb{T}^3)}$ by $\|\rho-1\|_{L^2(\mathbb{T}^3)}$ as the following
\begin{proposition}\label{UH2}
Suppose that $U\in H^2(\mathbb{T}^3)$ solves \eqref{PPE} with $\rho \in L^2(\mathbb{T}^3)$. Then it holds
\begin{align*}
  \|U\|_{H^2(\mathbb{T}^3)}\leq C \|\rho-1\|_{L^2(\mathbb{T}^3)}
\end{align*}
for some constant $C=C(\|U\|_{L^\infty(\mathbb{T}^3)})>0$.
Moreover, when $\sup_{t\geq 0}\|\rho-1\|_{L^2(\mathbb{T}^3)}$ is sufficiently small, the constant $C$ is independent of $\|U\|_{L^\infty(\mathbb{T}^3)}$.
\end{proposition}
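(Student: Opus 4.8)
The plan is to regard the Poisson--Poincar\'{e} equation \eqref{PPE} as a linear elliptic equation for $U$ with a bounded, sign-definite zeroth order term, run an $L^2$ energy estimate, and then upgrade to $H^2$ by elliptic regularity on the torus; the uniform (``moreover'') part will be isolated from a sharper, $\|U\|_{L^\infty}$-independent bound on $e^U-1$ obtained through a well-chosen test function. Since $U\in H^2(\mathbb{T}^3)\hookrightarrow L^\infty(\mathbb{T}^3)\cap C(\mathbb{T}^3)$, the mean value theorem gives $e^U-1=e^{U_\xi}U$ with $U_\xi$ lying between $0$ and $U$, so that $e^{-\|U\|_{L^\infty}}\le e^{U_\xi}\le e^{\|U\|_{L^\infty}}$ and \eqref{PPE} becomes $\Delta U-e^{U_\xi}U=1-\rho$. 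Testing against $U$ (admissible because $U\in H^1$ and $\Delta U\in L^2$) and using $\int_{\mathbb{T}^3}e^{U_\xi}U^2\,dx\ge e^{-\|U\|_{L^\infty}}\|U\|_{L^2}^2$ yields
\[
\|\nabla U\|_{L^2}^2+e^{-\|U\|_{L^\infty}}\|U\|_{L^2}^2\le\|\rho-1\|_{L^2}\|U\|_{L^2},
\]
hence $\|U\|_{L^2}\le e^{\|U\|_{L^\infty}}\|\rho-1\|_{L^2}$ and $\|\nabla U\|_{L^2}^2\le e^{\|U\|_{L^\infty}}\|\rho-1\|_{L^2}^2$. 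For the second derivatives, the Fourier symbols on $\mathbb{T}^3$ give $\|D^2U\|_{L^2}=\|\Delta U\|_{L^2}=\|(e^U-1)-(\rho-1)\|_{L^2}\le e^{\|U\|_{L^\infty}}\|U\|_{L^2}+\|\rho-1\|_{L^2}$; combining with the previous estimates proves the first assertion with $C=C(\|U\|_{L^\infty})$.

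For the uniform estimate, the key observation is that $\psi:=e^U-1$ belongs to $H^1(\mathbb{T}^3)$ (since $e^U\in L^\infty$ and $\nabla\psi=e^U\nabla U\in L^2$), hence is an admissible test function. Testing \eqref{PPE} against it gives
\[
\int_{\mathbb{T}^3}e^U|\nabla U|^2\,dx+\int_{\mathbb{T}^3}(e^U-1)^2\,dx=\int_{\mathbb{T}^3}(\rho-1)(e^U-1)\,dx,
\]
and since both left-hand terms are nonnegative, Cauchy--Schwarz produces the $\|U\|_{L^\infty}$-free bound $\|e^U-1\|_{L^2}\le\|\rho-1\|_{L^2}$, whence $\|D^2U\|_{L^2}=\|\Delta U\|_{L^2}\le 2\|\rho-1\|_{L^2}$ with a universal constant. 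Writing $U=\langle U\rangle+\widetilde U$ with $\widetilde U$ of zero mean, two applications of Poincar\'{e}'s inequality together with the Sobolev embedding give $\|\widetilde U\|_{H^2}+\|\widetilde U\|_{L^\infty}\le C\|\rho-1\|_{L^2}$ with $C$ universal. The constant mode is pinned down by the normalization $\int_{\mathbb{T}^3}e^U\,dx=\int_{\mathbb{T}^3}\rho\,dx$ (obtained by integrating \eqref{PPE}): writing $\int_{\mathbb{T}^3}e^U\,dx=e^{\langle U\rangle}\int_{\mathbb{T}^3}e^{\widetilde U}\,dx$ and using $|\mathbb{T}^3|\le\int_{\mathbb{T}^3}e^{\widetilde U}\,dx\le|\mathbb{T}^3|e^{\|\widetilde U\|_{L^\infty}}$ (the lower bound by Jensen's inequality), one sees that $e^{\langle U\rangle}$ lies within $O(\|\rho-1\|_{L^2})$ of $1$, so $|\langle U\rangle|\le C\|\rho-1\|_{L^2}$. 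Hence $\|U\|_{L^\infty}\le C\|\rho-1\|_{L^2}$; once $\|\rho-1\|_{L^2}$ is small enough this makes $\|U\|_{L^\infty}$ smaller than a fixed constant, so the factor $e^{\|U\|_{L^\infty}}$ in the first paragraph becomes universally bounded and the full $H^2$ estimate holds with a constant independent of $\|U\|_{L^\infty}$.

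The step I expect to be the genuine obstacle is the last one: obtaining a bound on $\|U\|_{L^\infty}$ itself --- not merely on $U$ modulo constants --- without presupposing smallness of $\|U\|_{L^\infty}$, since a naive argument closes the estimate only circularly. The device that resolves this is the test function $e^U-1$, which controls $\|e^U-1\|_{L^2}$, hence $\|D^2U\|_{L^2}$ and $\|\widetilde U\|_{L^\infty}$, by constants that never involve $\|U\|_{L^\infty}$; the conservation of total mass then fixes the remaining constant mode and closes the argument.
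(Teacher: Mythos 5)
Your proof is correct, and while the first half coincides with the paper's argument (mean value theorem $e^U-1=Ue^{U_\xi}$, testing with $U$, then elliptic regularity to get $\|U\|_{H^2}\leq Ce^{2\|U\|_{L^\infty}}\|\rho-1\|_{L^2}$), your treatment of the ``moreover'' clause is genuinely different from the paper's. The paper deduces from its $H^2$ bound and Sobolev embedding the self-referential scalar inequality $\|U\|_{L^\infty}\leq Ce^{2\|U\|_{L^\infty}}\|\rho-1\|_{L^2}$ and then asserts $\|U\|_{L^\infty}=O(\varpi)$ when $\|\rho-1\|_{L^2}=O(\varpi)\ll1$; strictly speaking that scalar inequality alone does not force smallness (it is also satisfied by large values of $\|U\|_{L^\infty}$), so the paper's step tacitly relies on a continuity/bootstrap or on a priori information from the existence proof. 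You instead exploit the monotone structure of the nonlinearity: testing \eqref{PPE} with $e^U-1\in H^1$ gives the identity $\int_{\mathbb{T}^3}e^U|\nabla U|^2\,dx+\|e^U-1\|_{L^2}^2=\int_{\mathbb{T}^3}(\rho-1)(e^U-1)\,dx$, hence the $\|U\|_{L^\infty}$-free bounds $\|e^U-1\|_{L^2}\leq\|\rho-1\|_{L^2}$ and $\|D^2U\|_{L^2}=\|\Delta U\|_{L^2}\leq2\|\rho-1\|_{L^2}$; the zero-mean part of $U$ is then controlled by two Poincar\'e inequalities (using that $\nabla U$ has zero average on the torus) plus $H^2\hookrightarrow L^\infty$, and the constant mode by the conservation identity $\int e^U=\int\rho$ together with Jensen's inequality. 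This yields $\|U\|_{L^\infty}\leq C\|\rho-1\|_{L^2}$ with a universal constant, after which the uniform $H^2$ estimate follows exactly as you say. The only point to note is that your lower bound on $e^{\langle U\rangle}$ (positivity of $|\mathbb{T}^3|+\int(\rho-1)$ and the expansion of the logarithm) does use the smallness of $\|\rho-1\|_{L^2}$, but that is precisely the hypothesis of the ``moreover'' statement, so there is no gap. In short, your argument buys a self-contained, non-circular derivation of the smallness of $\|U\|_{L^\infty}$ at the price of invoking the extra structural facts (monotone test function $e^U-1$ and $\int e^U=\int\rho$), whereas the paper's route is shorter but leans on an implicit bootstrap.
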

\begin{proof}
Note that the function $U\in H^2(\mathbb{T}^3)\hookrightarrow C(\mathbb{T}^3)$. For each $x\in \mathbb{T}^3$, by the mean value theorem, there exists a $U_\xi(x)\in [-U(x),U(x)]$ such that $e^{U(x)}-1=U(x)e^{U_\xi(x)}$. It implies that, there exists a function $U_\xi \in L^\infty(\mathbb{T}^3)$ such that $e^U-1=Ue^{U_\xi}$ in $\mathbb{T}^3$, where $e^{-\|U\|_{L^\infty(\mathbb{T}^3)}}\leq e^{U_\xi}\leq e^{\|U\|_{L^\infty(\mathbb{T}^3)}}$.

The equation \eqref{PPE} can thus be rewritten as
\begin{align}\label{U-xi}
 \Delta U=Ue^{U_\xi}+1-\rho.
\end{align}
Taking $L^2$ inner product of the above equation with $U$ yields
\begin{align*}
 \int_{\mathbb{T}^3}|\nabla U|^2+|U|^2 e^{U_\xi}dx=\int_{\mathbb{T}^3}U(\rho-1)dx,
\end{align*}
which implies that $\|U\|_{H^1(\mathbb{T}^3)}\leq e^{\|U\|_{L^\infty(\mathbb{T}^3)}}\|\rho-1\|_{L^2(\mathbb{T}^3)}$. Note that
\begin{align*}
  \|D^2 U\|_{L^2(\mathbb{T}^3)}=\, & \|Ue^{U_\xi}+1-\rho\|_{L^2(\mathbb{T}^3)}\\
  \leq\, & \|U\|_{L^2(\mathbb{T}^3)}e^{\|U\|_{L^\infty(\mathbb{T}^3)}}+\|\rho-1\|_{L^2(\mathbb{T}^3)}\\
  \leq\, & (e^{2\|U\|_{L^\infty(\mathbb{T}^3)}}+1)\|\rho-1\|_{L^2(\mathbb{T}^3)}.
\end{align*}
Hence we get
\begin{align}\label{U-H2-estimate}
\|U\|_{H^2(\mathbb{T}^3)}\leq 3 e^{2\|U\|_{L^\infty(\mathbb{T}^3)}}\|\rho-1\|_{L^2(\mathbb{T}^3)},
\end{align}
which implies that
\begin{align}\label{U-Linfty-estimate}
  \|U\|_{L^\infty(\mathbb{T}^3)}\leq Ce^{2\|U\|_{L^\infty(\mathbb{T}^3)}}\|\rho-1\|_{L^2(\mathbb{T}^3)}.
\end{align}
By Remark \ref{remark3.2}, when $\sup_{t\geq 0}\|\rho-1\|_{L^2(\mathbb{T}^3)}\ll 1$, we have $\sup_{t\geq 0}\|U\|_{L^\infty(\mathbb{T}^3)}=O(1)$. 
Furthermore, 
by the estimate \eqref{U-Linfty-estimate}, we deduce that $\sup_{t\geq 0}\|U\|_{L^\infty(\mathbb{T}^3)}=o(1)$. Hence, by the estimate \eqref{U-H2-estimate}, we have
\begin{align*}
  \|U\|_{H^2(\mathbb{T}^3)}\leq 3 e^{o(1)}\|\rho-1\|_{L^2(\mathbb{T}^3)}\leq 4 \|\rho-1\|_{L^2(\mathbb{T}^3)}.
\end{align*} 
The proof is complete.
\end{proof}
\begin{corollary}\label{U1-U2}
Suppose that $U_i\in H^2(\mathbb{T}^3)$ solves \eqref{PPE} with $\rho_i \in L^2(\mathbb{T}^3)$ for $i=1,2$.
Then,
\begin{align*}
  \|U_1-U_2\|_{H^2(\mathbb{T}^3)}\leq C \|\rho_1-\rho_2\|_{L^2(\mathbb{T}^3)}
\end{align*}
  holds for some constant $C=C(\max\limits_{i=1,2}\|U_i\|_{L^\infty(\mathbb{T}^3)})>0$.
  Moreover, when $\max\limits_{i=1,2}\sup_{t\geq 0}\|\rho_i-1\|_{L^2(\mathbb{T}^3)}$ is sufficiently small, the constant $C$ is independent of $\max\limits_{i=1,2}\|U_i\|_{L^\infty(\mathbb{T}^3)}$.
\end{corollary}
\begin{proof}
Take the difference for $U_1$ and $U_2$ to get
\begin{align*}
  \Delta (U_1-U_2)= e^{U_1}-e^{U_2}-(\rho_1-\rho_2)
  = (U_1-U_2)e^{\overline{U}_\xi}-(\rho_1-\rho_2),
\end{align*}
where the value of $\overline{U}_\xi(x)$ lies between $U_1(x)$ and $U_2(x)$ for every $x\in \mathbb{T}^3$ and hence $|\overline{U}_\xi|\leq \max\limits_{i=1,2}\|U_i\|_{L^\infty(\mathbb{T}^3)}$. Through the similar arguments to that  in Proposition \ref{UH2}, we obtain that
\begin{align}\label{u1u2-h2}
\|U_1-U_2\|_{H^2(\mathbb{T}^3)}\leq 3 e^{2 \max\limits_{i=1,2}\|U_i\|_{L^\infty(\mathbb{T}^3)}}\|\rho_1-\rho_2\|_{L^2(\mathbb{T}^3)}.
\end{align}
Furthermore, by Proposition \ref{UH2}, if $\max\limits_{i=1,2}\sup_{t\geq 0}\|\rho_i-1\|_{L^2(\mathbb{T}^3)}$ is sufficiently small, then 
$$\max\limits_{i=1,2}\sup_{t\geq 0}\|U_i\|_{L^\infty(\mathbb{T}^3)}=o(1).$$
By the estimate \eqref{u1u2-h2}, we have
 \begin{align*}
\|U_1-U_2\|_{H^2(\mathbb{T}^3)}\leq \,3 e^{o(1)}\|\rho_1-\rho_2\|_{L^2(\mathbb{T}^3)}
\leq  \,4 \|\rho_1-\rho_2\|_{L^2(\mathbb{T}^3)}.
\end{align*}
The proof is complete.
\end{proof}

Our next intention is to iterate the argument above to deduce that the solution $U$ lies in higher order Sobolev spaces provided that $\rho$ has sufficient regularities.
%lies in sufficiently good spaces.
\begin{proposition}\label{elliprop}
Suppose that $U\in H^2(\mathbb{T}^3)$ solves \eqref{PPE} with $\rho \in L^2(\mathbb{T}^3)$.
Let $m$ be a positive integer and the multi-index $\gamma=[\gamma_0,\gamma_1,\gamma_2,\gamma_3]$ with $1\leq |\gamma|\leq m$. Assume that
$\sup_{t\geq 0}\|\partial^{\gamma} \rho\|_{L^2(\mathbb{T}^3)}<\infty$ and
\begin{align}\label{rhoL2}
  \sup_{t\geq 0}\sum_{1\leq |\gamma'|\leq m-1}\|\partial^{\gamma'} \rho\|_{L^2(\mathbb{T}^3)}+\sup_{t\geq 0}\|\rho-1\|_{L^2(\mathbb{T}^3)}\ll 1.
\end{align}
Then for any $t\geq 0$ and $1\leq |\gamma|\leq m$, we have
\begin{align}\label{ellipEstimate}
  \|\partial^\gamma U(t,x)\|_{H^2(\mathbb{T}^3)}\leq C\sum_{1\leq |\gamma'|\leq m}\|\partial^{\gamma'} \rho(t,x)\|_{L^2(\mathbb{T}^3)},
\end{align}
where the constant $C$ is independent of $U$ and $\rho$.
\end{proposition}
\begin{proof}
Suppose $U\in H^2(\mathbb{T}^3)$ satisfies $\Delta U=e^U-\rho$ with $\rho\in L^2(\mathbb{T}^3)$, we consider a mapping 
\begin{align*}
  \mathcal{F}:\,\ \ &H^2(\mathbb{T}^3)\times L^2(\mathbb{T}^3)\rightarrow L^2(\mathbb{T}^3)\\
  & (v,w)\mapsto \Delta v-e^v+w.
\end{align*}
Note that $\mathcal{F}(U, \rho)=0$ and the Fr\'{e}chet derivative $\frac{\partial\mathcal{F}}{\partial v}(U,\rho)=\Delta-e^U\in \mathcal{L}(H^2(\mathbb{T}^3);L^2(\mathbb{T}^3))$ is reversible by the Lax-Milgram theorem. Then, we use the implicit function theorem 
to get the Fr\'{e}chet derivative $$U'(\rho)=-\left(\frac{\partial\mathcal{F}}{\partial v}(U,\rho)\right)^{-1}\frac{\partial\mathcal{F}}{\partial w}(U,\rho)=-(\Delta -e^U)^{-1}.$$

For $|\gamma|=1$, by the chain rules, we deduce that $\partial^\gamma U=U'(\rho)\partial^\gamma \rho=-(\Delta -e^U)^{-1}\partial^\gamma \rho$, which is rewritten as
\begin{align*}
 \Delta (\partial^\gamma U)-(\partial^\gamma U) e^U+\partial^\gamma \rho= 0.
\end{align*}
Assuming $\sup_{t\geq 0}\|\rho-1\|_{L^2(\mathbb{T}^3)} \ll 1$ and  $\sup_{t\geq 0}\|\partial^\gamma \rho(t,x)\|_{L^2(\mathbb{T}^3)} <\infty$, 
through the similar procedure to that in Proposition \ref{UH2}, we deduce that
\begin{align*}
 \|\partial^\gamma U\|_{H^2(\mathbb{T}^3)}\leq 3e^{2\|U\|_{L^\infty(\mathbb{T}^3)}}\|\partial^\gamma \rho\|_{L^2(\mathbb{T}^3)}
 \leq 4\|\partial^\gamma \rho\|_{L^2(\mathbb{T}^3)}, \quad \forall \,t\geq 0, \,\,\,|\gamma|=1.
\end{align*}
This verifies the assertion \eqref{ellipEstimate} for $|\gamma|=1$.

For $|\gamma|=2,\ldots,m$, we assume the assertion \eqref{ellipEstimate} holds. We now consider the case of $|\gamma|=m+1$.

From the equation $\Delta U=e^U-\rho$, we have
\begin{align*}
  \Delta \partial^\gamma U-e^U \partial^\gamma U= [\partial^\gamma (e^U)- e^U \partial^\gamma U]-\partial^\gamma \rho
=: \mathcal{S}-\partial^\gamma \rho,
\end{align*}
where $|\gamma|=m+1$, and
\begin{align*}
  \mathcal{S}:=e^U \sum^{m+1}_2 \sum_{\substack{p_1+\cdots + p_k=|\gamma| \\ |p_j|\geq 1}}\frac{|\gamma|!}{p_1!\cdots p_k!}\partial^{p_1}U\cdots \partial^{p_k}U.
\end{align*}
When $$\sup_{t\geq 0}\sum_{1\leq |\gamma'|\leq m}\|\partial^{\gamma'} \rho\|_{L^2(\mathbb{T}^3)}+\sup_{t\geq 0}\|\rho-1\|_{L^2(\mathbb{T}^3)}\ll 1,$$ we have
\begin{align*}
\|\mathcal{S}\|_{L^2(\mathbb{T}^3)}\leq &\,C e^{\|U\|_{L^\infty(\mathbb{T}^3)}}\|\partial^{p_1}U\|_{L^2(\mathbb{T}^3)}
\|\partial^{p_2}U\|_{L^\infty(\mathbb{T}^3)}\cdots \|\partial^{p_k}U\|_{L^\infty(\mathbb{T}^3)}\\
\leq & \,C e^{4 \|\rho-1\|_{L^2(\mathbb{T}^3)}}\sum_{1\leq |\gamma'|\leq m}\|\partial^{\gamma'} \rho\|_{L^2(\mathbb{T}^3)}\\
\leq & \,C \sum_{1\leq |\gamma'|\leq m}\|\partial^{\gamma'} \rho\|_{L^2(\mathbb{T}^3)}.
\end{align*}
Hence, for $|\gamma|=m+1$, we obtain that
\begin{align*}
\|\partial^\gamma U\|_{H^2(\mathbb{T}^3)}\leq & \, 3e^{2\|U\|_{L^\infty(\mathbb{T}^3)}}\|\mathcal{S}-\partial^\gamma \rho\|_{L^2(\mathbb{T}^3)}\\
\leq &\, 4 (\|\mathcal{S}\|_{L^2(\mathbb{T}^3)}+\|\partial^\gamma \rho\|_{L^2(\mathbb{T}^3)})\\
\leq & \,C \sum_{1\leq |\gamma'|\leq m}\|\partial^{\gamma'} \rho\|_{L^2(\mathbb{T}^3)}+C \|\partial^\gamma \rho\|_{L^2(\mathbb{T}^3)}.
\end{align*}
which indicates that \eqref{ellipEstimate} holds for $|\gamma|=m+1$. Inductively, we complete the proof.
\end{proof}
\begin{remark}\label{remark3.6}
Let $\rho=\int_{\mathbb{R}^3} F dv=\int_{\mathbb{R}^3}\sqrt{\mu}fdv+1$ as in \eqref{poisson}.
Combining the results in Proposition \ref{UH2} and Proposition \ref{elliprop}, we obtain that, if $\sup_{t\geq 0}\sum_{|\gamma'|\leq |\gamma|}\|\partial^{\gamma'}f\|_{L^2(\mathbb{T}^3\times \mathbb{R}^3)}\ll 1$, then there exists a constant $C>0$ independent of $U$ and $f$ such that 
\begin{align}
  \|\partial^\gamma U(t,x)\|_{H^2(\mathbb{T}^3)}\leq C\sum_{|\gamma'|\leq |\gamma|}\|\partial^{\gamma'} f(t,x,v)\|_{L^2(\mathbb{T}^3\times \mathbb{R}^3)}, \quad \forall \,t\geq 0.
\end{align}

\end{remark}

\section{Local-in-time well-posedness to the     system \eqref{perturb eqn}--\eqref{poisson} }\label{local-section}
In this section, we construct a unique local-in-time solution to the ionic  Vlasov-Poisson-Boltzmann   system \eqref{perturb eqn}--\eqref{poisson} with a uniform energy estimate as follows.
\begin{theorem}\label{LocalTheorem}
There exist $M_0>0$ and $T_0>0$ such that if $T_0\leq M_0/2$ and $\mathcal{E}(0)\leq M_0/2$,
then the ionic  Vlasov-Poisson-Boltzmann  system \eqref{perturb eqn}--\eqref{poisson}
 has a unique solution $f(t,x,v)$  in $[0, T_0]\times \mathbb{T}^3 \times \mathbb{R}^3$ satisfying
\begin{align*}
 \sup_{0\leq t\leq T_0}\mathcal{E}(t)\leq M_0,
\end{align*}
and $\mathcal{E}(t)$ is continuous over $[0, T_0]$. If $F_0(x,v)=\mu(v)+\sqrt{\mu(v)}f_0(x,v)\geq 0$, then
$F(t,x,v)=\mu(v)+\sqrt{\mu(v)}f(t,x,v)\geq 0$. Furthermore, the conservation laws \eqref{mass}--\eqref{energy} are valid for all $0<t\leq T_0$ if they hold initially.
\end{theorem}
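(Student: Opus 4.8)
The plan is to construct the solution as the limit of a Picard iteration, to propagate a uniform bound on $\mathcal{E}(t)$ by a weighted $L^{2}$ energy estimate in the spirit of Guo's nonlinear energy method, to obtain convergence from a contraction at one lower order of derivatives, and to read off nonnegativity and the conservation laws from the structure of the equations. For the iteration, start from $f^{0}\equiv 0$; given $f^{n}$ with $F^{n}:=\mu+\sqrt\mu f^{n}\ge 0$ and $\sup_{[0,T_{0}]}\mathcal{E}^{n}\le M_{0}$ (where $\mathcal{E}^{n}$ denotes the functional \eqref{Efunctional} with $f$ replaced by $f^{n}$, temporal derivatives being read off the equation), first solve the Poisson--Poincar\'e equation $\Delta\phi^{n}=e^{\phi^{n}}-\int_{\mathbb{R}^{3}}\sqrt\mu f^{n}\,dv-1$ via Section \ref{ellip-section}, obtaining $E^{n}:=-\nabla\phi^{n}$ with $\sum_{|\gamma|\le N}\|\partial^{\gamma}E^{n}\|\le C\vvvert f^{n}\vvvert$ by Proposition \ref{elliprop} and a Lipschitz dependence $f^{n}\mapsto E^{n}$ in lower-order norms by Corollary \ref{U1-U2}; then let $f^{n+1}$ solve the \emph{linear} problem
\[
\{\partial_{t}+v\cdot\nabla_{x}+E^{n}\cdot\nabla_{v}\}f^{n+1}+\mathcal{L}f^{n+1}=E^{n}\cdot v\sqrt\mu+\tfrac{v}{2}\cdot E^{n}f^{n}+\Gamma(f^{n},f^{n}),\qquad f^{n+1}(0)=f_{0}.
\]
This is globally well posed and propagates smoothness: the damped transport $\partial_{t}+v\cdot\nabla_{x}+E^{n}\cdot\nabla_{v}+\nu$ generates a semigroup (the characteristics $\dot X=V,\ \dot V=E^{n}(t,X)$ are well defined since $E^{n}$ is Lipschitz in $x$) and $\mathcal{K}=\nu-\mathcal{L}$ is a bounded perturbation. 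Nonnegativity is built in by noting that $F^{n+1}=\mu+\sqrt\mu f^{n+1}$ solves a Vlasov--Boltzmann equation whose collision loss term $F^{n+1}\int\!\!\int|v-u|^{r}b_{0}(\theta)F^{n}\,du\,d\omega\ge 0$ acts as a damping and whose source is a gain term $\ge 0$; arranging the scheme accordingly, Duhamel's formula along characteristics gives $F^{n+1}\ge 0$ whenever $F_{0}\ge 0$.

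\emph{Uniform bound.} The central step is to choose $M_{0},T_{0}$ small so that $\sup_{[0,T_{0}]}\mathcal{E}^{n}\le M_{0}$ forces $\sup_{[0,T_{0}]}\mathcal{E}^{n+1}\le M_{0}$. I would apply $\partial_{\beta}^{\gamma}$ ($|\gamma|+|\beta|\le N$) to the $f^{n+1}$-equation, take the $L^{2}(\mathbb{T}^{3}\times\mathbb{R}^{3})$ inner product with $W^{2}\partial_{\beta}^{\gamma}f^{n+1}$ for the weight $W=e^{-q\,x\cdot v/\langle v\rangle}$ ($q\gg 1$), and integrate. The transport operator hitting $W^{2}$ produces $\int(\nu+\Xi^{n})|W\partial_{\beta}^{\gamma}f^{n+1}|^{2}$ with $\Xi^{n}=q|v|^{2}/\langle v\rangle+qE^{n}\cdot(\cdots)$; since $W\sim 1$ on $\mathbb{T}^{3}$ and $\|E^{n}\|_{\infty}\le C\sqrt{M_{0}}$, for $q$ large this is comparable to $\|\langle v\rangle^{1/2}\partial_{\beta}^{\gamma}f^{n+1}\|^{2}$, so it delivers precisely the dissipation $\vvvert f^{n+1}\vvvert_{\langle v\rangle}^{2}$ appearing in \eqref{Efunctional}. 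The $\mathcal{L}f^{n+1}$-term is handled by its $L^{2}_{v}$-coercivity together with Lemma \ref{K} and Remark \ref{remark}, running an induction on the number of velocity derivatives to absorb the lower-order commutator errors; $\Gamma(f^{n},f^{n})$ is bounded by Lemma \ref{Gamma} (whose hypothesis $N\ge 8$ is what is used in the soft-potential range $r\in(-3,0)$); the drift $\tfrac v2\cdot E^{n}f^{n}$ and the commutators $\partial^{\gamma_{1}}E^{n}\cdot\nabla_{v}\partial_{\beta}^{\gamma-\gamma_{1}}f^{n+1}$ ($\gamma_{1}\ne 0$) are controlled by standard product estimates and $H^{2}(\mathbb{T}^{3})\hookrightarrow L^{\infty}(\mathbb{T}^{3})$ together with the elliptic bound on $E^{n}$; the forcing $E^{n}\cdot v\sqrt\mu$ gives only $C\|\partial^{\gamma}E^{n}\|_{L^{2}_{x}}\|\partial_{\beta}^{\gamma}f^{n+1}\|\le C\sqrt{M_{0}}\,\vvvert f^{n+1}\vvvert$; and the top-order drift $E^{n}\cdot\nabla_{v}\partial_{\beta}^{\gamma}f^{n+1}$ and the pure transport vanish after integration by parts and periodicity. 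All terms with a factor $\sqrt{M_{0}}$ (or an $\e$ from Young's inequality) are absorbed into the $\langle v\rangle$-dissipation; the leftover, which includes an order-zero dissipation term with an $O(1)$ constant together with $C(\mathcal{E}^{n}+(\mathcal{E}^{n})^{2})$, is closed by integrating on $[0,T_{0}]$ and invoking Gronwall, giving $\sup_{[0,T_{0}]}\mathcal{E}^{n+1}\le C_{*}(\mathcal{E}(0)+T_{0}M_{0})e^{C_{*}T_{0}}\le M_{0}$ once $M_{0},T_{0}$ are small (compatibly with $T_{0}\le M_{0}/2$, $\mathcal{E}(0)\le M_{0}/2$). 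This closes the induction, so $\sup_{n}\sup_{[0,T_{0}]}\mathcal{E}^{n}\le M_{0}$.

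\emph{Convergence, uniqueness, continuity, conservation.} For convergence, set $d^{n+1}:=f^{n+1}-f^{n}$; subtracting the two linear equations, $d^{n+1}$ solves a damped transport equation whose right-hand side is linear in $d^{n}$ and in $E^{n}-E^{n-1}$, the latter controlled by $\|d^{n}\|$ via Corollary \ref{U1-U2}, while $d^{n+1}$ vanishes at $t=0$ together with its relevant temporal derivatives (both iterates having initial datum $f_{0}$, and $E^{n}(0)=E^{n-1}(0)$). The same weighted estimate at order $N-1$ — where every source contribution now carries $\sqrt{M_{0}}$ or is integrated over $[0,T_{0}]$ — yields a contraction, so $f^{n}$ converges (in the topology of $N-1$ derivatives) to some $f$; combined with the uniform order-$N$ bound, interpolation and weak-$*$ compactness, $f$ lies in the space controlled by $\mathcal{E}$, obeys $\sup_{[0,T_{0}]}\mathcal{E}(t)\le M_{0}$ by weak lower semicontinuity, and solves \eqref{perturb eqn}--\eqref{poisson} with $f(0)=f_{0}$; uniqueness follows from the same difference estimate applied to two solutions, and $F=\mu+\sqrt\mu f\ge 0$ by passing to the limit in $F^{n}\ge 0$. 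Continuity of $t\mapsto\mathcal{E}(t)$ holds since each $\partial_{\beta}^{\gamma}f\in C_{t}L^{2}_{x,v}$ and $t\mapsto\int_{0}^{t}\vvvert f\vvvert_{\langle v\rangle}^{2}$ is an indefinite integral. Finally, \eqref{mass}--\eqref{energy} propagate by testing \eqref{perturb eqn} against $\sqrt\mu$, $v\sqrt\mu$ and $\tfrac12|v|^{2}\sqrt\mu$: the collision terms drop because $\mathcal{L}$ is self-adjoint with kernel $\mathscr{N}$ and $\Gamma(f,f)\perp\mathscr{N}$ in $L^{2}_{v}$, the term $v\cdot\nabla_{x}(\cdot)$ integrates to zero on $\mathbb{T}^{3}$, and the field terms cancel after an integration by parts in $v$; for the energy one also uses $\tfrac{d}{dt}\int_{\mathbb{T}^{3}}(\phi_{f}e^{\phi_{f}}+\tfrac12|\nabla\phi_{f}|^{2})\,dx=\int_{\mathbb{T}^{3}}\partial_{t}\phi_{f}(\phi_{f}e^{\phi_{f}}+\rho)\,dx$, read off from \eqref{poisson}.

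\emph{Main obstacle.} I expect the main difficulty to be the uniform energy estimate, specifically organizing the weighted computation so that the drift terms $\tfrac v2\cdot Ef$ and $E\cdot\nabla_{v}f$ — which for soft potentials $r\in(-3,1)$ are \emph{not} dominated by the bare $\nu$-dissipation because of the extra factor $|v|$ — are genuinely absorbed by the $\langle v\rangle$-dissipation generated by $W$, and keeping the smallness this needs (in $M_{0}$) compatible with the smallness needed to close the non-quadratic forcing $E\cdot v\sqrt\mu$ (in $T_{0}$) and with the velocity-derivative induction for $\mathcal{L}$. The exponential nonlinearity of the Poisson--Poincar\'e equation enters only through the elliptic estimates of Section \ref{ellip-section}, which here are available as a black box.
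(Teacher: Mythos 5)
Your overall route coincides with the paper's: a Picard iteration with the field frozen at the previous step and solved through the Poisson--Poincar\'{e} theory of Section \ref{ellip-section}, a uniform bound on $\mathcal{E}$ obtained from the weighted estimate with $W=e^{-q\,x\cdot v/\langle v\rangle}$ so that $\nu+\Xi^{n}\sim\langle v\rangle$ produces exactly the $\langle v\rangle$-dissipation in \eqref{Efunctional}, closure by the smallness $T_{0}\le M_{0}/2$, $\mathcal{E}(0)\le M_{0}/2$, and a zero-order weighted difference estimate (via Corollary \ref{U1-U2}) for uniqueness. The only genuinely different touch is your explicit contraction at order $N-1$ to produce the limit, which the paper leaves implicit; that is a reasonable addition, as is your sketch of the conservation laws, which the paper does not spell out.

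There is, however, one concrete gap: your displayed linearization is incompatible with your nonnegativity argument. You place the full $\mathcal{L}f^{n+1}$ on the left and the full $\Gamma(f^{n},f^{n})$ together with the drift $\tfrac{v}{2}\cdot E^{n}f^{n}$ on the right. Converting this to an equation for $F^{n+1}=\mu+\sqrt{\mu}f^{n+1}$ gives
$\{\partial_t+v\cdot\nabla_x+E^{n}\cdot\nabla_v\}F^{n+1}=Q(\mu,\sqrt{\mu}f^{n+1})+Q(\sqrt{\mu}f^{n+1},\mu)+Q(\sqrt{\mu}f^{n},\sqrt{\mu}f^{n})+\tfrac{v}{2}\cdot E^{n}\sqrt{\mu}\,(f^{n}-f^{n+1})$,
which is \emph{not} of the form ``nonnegative gain source minus a damping $R(F^{n})F^{n+1}$'': the terms $Q_{\mathrm{gain}}(\mu,\sqrt{\mu}f^{n+1})$, $Q(\sqrt{\mu}f^{n+1},\mu)$ and the drift mismatch involve the unsigned quantity $\sqrt{\mu}f^{n+1}=F^{n+1}-\mu$, so Duhamel along the characteristics of $E^{n}$ does not yield $F^{n+1}\ge 0$, and the loss term you quote, $F^{n+1}\!\int\!\!\int|v-u|^{r}b_{0}(\theta)F^{n}\,du\,d\omega$, simply does not appear in your scheme. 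To make positivity ``built in'' you must iterate as in \eqref{iterateF}: keep the loss linear in the new iterate, $Q_{\mathrm{loss}}(F^{n},F^{n+1})=R(F^{n})F^{n+1}$ with $R(F^{n})\ge 0$, and the gain in the old one, $Q_{\mathrm{gain}}(F^{n},F^{n})\ge 0$; in perturbation form this is \eqref{iteratef}, with $\mathcal{K}f^{n}$ and $\Gamma_{\mathrm{gain}}(f^{n},f^{n})-\Gamma_{\mathrm{loss}}(f^{n},f^{n+1})$ on the right and the drift $\tfrac{v}{2}\cdot E^{n}f^{n+1}$ attached to the new iterate. This modification costs nothing in your energy estimate (the mixed term $\Gamma_{\mathrm{loss}}(f^{n},f^{n+1})$ is still covered by Lemma \ref{Gamma} and Remark \ref{gamma-zero} and absorbed using $\sqrt{M_{0}}$-smallness), but without it the claim $F\ge 0$ --- part of the statement --- is not established, since positivity of the limit is obtained precisely by passing to the limit in $F^{n}\ge 0$.
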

\begin{proof}
Consider the iterating sequence $\{F^n\}_{n\in \mathbb{N}}$ which solves
\begin{align}\label{iterateF}
 \{\partial_t +v\cdot \nabla_x+E^n \cdot \nabla_v +R(F^n)\}F^{n+1}=&\,Q_{\rm{gain}}(F^n,F^n)\nonumber\\
 =&\,\int\!\!\int_{\mathbb{R}^3\times \mathbb{S}^2}|(v-u)\cdot \omega|F^n(v')F^n(u')dud\omega
\end{align}
with the initial datum $F^{n+1}(0,x,v)=F_0(x,v)$, $n\in \mathbb{N}$. Here,
\begin{align*}
  R(F^n):= \int\!\!\int_{\mathbb{R}^3\times \mathbb{S}^2}|(v-u)\cdot \omega|F^n(u)dud\omega.
\end{align*}
We take $F^0(t,x,v)\equiv F_0(x,v)$. Since $F^{n}=\mu+\sqrt{\mu}f^{n}$ for $n\in \mathbb{N}$, we have $f^0(t,x,v)\equiv f_0(x,v)$ and $f^{n+1}(0,x,v)=f_0(x,v)$.
Equivalently to \eqref{iterateF}, $f^{n+1}$ satisfies
\begin{align}\label{iteratef}
 \left\{\partial_t +v\cdot \nabla_x+E^n \cdot \nabla_v +\nu(v)-\frac{v}{2}\cdot E^n\right\}f^{n+1}-E^n \cdot v\sqrt{\mu}=\mathcal{K}f^n+\Gamma_{\rm{gain}}(f^n,f^n)-\Gamma_{\rm{loss}}(f^n,f^{n+1}).
\end{align}
We now prove that, there exist $M_0>0$ and $T_0>0$ such that if $T_0\leq M_0/2$, $\mathcal{E}(0)\leq M_0/2$ and $\sup_{0\leq t\leq T_0}\mathcal{E}_n (t)\leq M_0$, then
$\sup_{0\leq t\leq T_0}\mathcal{E}_{n+1} (t)\leq M_0$. Then, we can establish the local-in-time existence in Theorem \ref{LocalTheorem}, since $\sup_{0\leq t\leq T_0}\mathcal{E}_0 (t)=\mathcal{E}(0)\leq M_0/2$.

Taking $\partial_\beta^\gamma$ derivatives of \eqref{iteratef}, we have
\begin{align}\label{iterate-df}
 & \left\{\partial_t +v\cdot \nabla_x+E^n \cdot \nabla_v -\frac{v}{2}\cdot E^n\right\}\partial_\beta^\gamma f^{n+1}+\partial_\beta [\nu \partial^\gamma f^{n+1}]\nonumber\\
&\quad\, =\partial^\gamma E^n \cdot \partial_\beta(v\sqrt{\mu})-\sum_{\beta_1\neq 0}C_\beta^{\beta_1}\partial_{\beta_1}v\cdot \nabla_x \partial_{\beta-\beta_1}^\gamma f^{n+1}+ \partial_\beta [\mathcal{K} \partial^\gamma f^n]\nonumber\\
 &\qquad \,+ \sum_{\gamma_1,\beta_1\neq 0} C_\beta^{\beta_1}C_\gamma^{\gamma_1}\partial^{\gamma_1}E^n\cdot \partial_{\beta_1}\left(\frac{v}{2}\right)\partial_{\beta-\beta_1}^{\gamma-\gamma_1}f^{n+1} -\sum_{\gamma_1\neq 0} C_\gamma^{\gamma_1}\partial^{\gamma_1}E^n\cdot \nabla_v \partial_{\beta}^{\gamma-\gamma_1}f^{n+1} \nonumber\\
 &\qquad \,+\sum C_\gamma^{\gamma_1} \partial_\beta \left\{\Gamma_{\rm{gain}}(\partial^{\gamma_1}f^n,\partial^{\gamma-\gamma_1}f^n)-\Gamma_{\rm{loss}}
 (\partial^{\gamma_1}f^n,\partial^{\gamma-\gamma_1}f^{n+1})\right\}.
\end{align}

Now we take $L^2$ inner product of \eqref{iterate-df} with $\partial_\beta^\gamma f^{n+1}$, $|\gamma|+|\beta|\leq N$, in $\mathbb{T}^3\times \mathbb{R}^3$ and deal with the result term by term as follows.

(i). First, we have
\begin{align*}
&  \partial_\beta^\gamma f^{n+1}\left\{\partial_t +v\cdot \nabla_x+E^n \cdot \nabla_v -\frac{v}{2}\cdot E^n\right\}\partial_\beta^\gamma f^{n+1}\\
&=   \left\{\partial_t +v\cdot \nabla_x+E^n \cdot \nabla_v -v\cdot E^n\right\}\big(\frac{1}{2}| \partial_\beta^\gamma f^{n+1}|^2\big).
\end{align*}
Taking $g=\partial^\gamma f^{n+1}$ in \eqref{nu-coercive} of Lemma \ref{K}, we get
\begin{align*}
 & \int\!\!\int_{\mathbb{T}^3\times \mathbb{R}^3}\partial_\beta^\gamma f^{n+1}\partial_\beta [\nu \partial^\gamma f^{n+1}]dxdv\\
 &\,\quad   \geq\|\partial_\beta^\gamma f^{n+1}\|^2_\nu-\eta \sum_{|\beta_1|\leq |\beta|}\|\partial_{\beta_1}^\gamma f^{n+1}\|^2-C_\eta \|\partial^\gamma f^{n+1}\|^2
\end{align*}
for some small $\eta>0$.
Then the contribution of the left-hand side of \eqref{iterate-df} is
\begin{align*}
  \frac{1}{2}\frac{d}{dt}\|\partial_\beta^\gamma f^{n+1}(t)\|^2+\int\!\!\int_{\mathbb{T}^3\times \mathbb{R}^3}\big(\nu-\frac{1}{2}v\cdot E^n\big) | \partial_\beta^\gamma f^{n+1}|^2 dxdv-\eta \sum_{|\beta_1|\leq |\beta|}\|\partial_{\beta_1}^\gamma f^{n+1}\|^2-C_\eta \|\partial^\gamma f^{n+1}\|^2.
\end{align*}
For $M_0\ll 1$, we have $\|E^n\|_\infty\leq M_0\ll 1$. Since $\nu(v)\sim 1+|v|$, then $\nu(v)-\frac{1}{2}v\cdot E^n\geq \frac{1}{2}\nu(v)$. Hence the contribution of the left-hand side of \eqref{iterate-df} is reduced by
\begin{align*}
  \frac{1}{2}\frac{d}{dt}\|\partial_\beta^\gamma f^{n+1}(t)\|^2+\frac{1}{2}\|\partial_\beta^\gamma f^{n+1}(t)\|_\nu^2-\eta \sum_{|\beta_1|\leq |\beta|}\|\partial_{\beta_1}^\gamma f^{n+1}(t)\|^2-C_\eta \|\partial^\gamma f^{n+1}(t)\|^2.
\end{align*}

\medskip
(ii). For the right-hand side of \eqref{iterate-df}, we estimate them one by one.

(iia). The first term contributes
\begin{align*}
  \int\!\!\int_{\mathbb{T}^3\times \mathbb{R}^3}\partial^\gamma E^n \cdot \partial_\beta(v\sqrt{\mu})\partial_\beta^\gamma f^{n+1}dxdv\leq C \vvvert E^n\vvvert \vvvert f^{n+1}\vvvert.
\end{align*}

(iib). By using Lemma \ref{K}, the inner product of $\sum_{\beta_1\neq 0}C_\beta^{\beta_1}\partial_{\beta_1}v\cdot \nabla_x \partial_{\beta-\beta_1}^\gamma f^{n+1}$ and $\partial_\beta [\mathcal{K} \partial^\gamma f^n]$ can be controlled by $(\vvvert f^n\vvvert+\vvvert f^{n+1}\vvvert)\vvvert f^{n+1}\vvvert$.

(iic). The second line on the right-hand side is controlled by
\begin{align}\label{En-local}
 &C \sum_{\gamma_1\neq 0,|\beta_1|=1} \int_{\mathbb{T}^3}|\partial^{\gamma_1}E^n|
 \left( \int_{\mathbb{R}^3}  | \partial_{\beta-\beta_1}^{\gamma-\gamma_1}f^{n+1}|^2dv\right)^{\frac{1}{2}}
 \left( \int_{\mathbb{R}^3}  | \partial_{\beta}^{\gamma}f^{n+1}|^2dv\right)^{\frac{1}{2}} dx\nonumber\\
 &\quad + C\sum_{\gamma_1\neq 0}\int_{\mathbb{T}^3}|\partial^{\gamma_1}E^n|
 \left( \int_{\mathbb{R}^3}  |\nabla_v \partial_{\beta}^{\gamma-\gamma_1}f^{n+1}|^2dv\right)^{\frac{1}{2}}
 \left( \int_{\mathbb{R}^3}  | \partial_{\beta}^{\gamma}f^{n+1}|^2dv\right)^{\frac{1}{2}} dx.
\end{align}
Note that
\begin{align*}
  &\sup_x \int_{\mathbb{R}^3}|g(x,u)|^2 du\leq C \sum_{|\gamma|\leq 2} \|\partial^\gamma g\|^2.
\end{align*}
Thus, when $|\gamma_1|\geq 3$, $|\gamma-\gamma_1|+2\leq |\gamma|-1$, the second term of \eqref{En-local} is controlled by
\begin{align*}
  C\|\partial^{\gamma_1}E^n\|\sum_{|\gamma-\gamma_1|+1+2\leq |\gamma|}\|\nabla_v \partial_{\beta}^{\gamma-\gamma_1}f^{n+1}\|
  \vvvert f^{n+1}\vvvert \leq C\|\partial^{\gamma_1}E^n\|\vvvert f^{n+1}\vvvert^2
  \leq C\vvvert E^n\vvvert \vvvert f^{n+1}\vvvert^2.
\end{align*}
When $|\gamma_1|\leq 2$, $|\gamma_1|+2\leq 4\leq N$, the second term of \eqref{En-local} is controlled by
\begin{align*}
  C\|\partial^{\gamma_1}E^n\|_{H^2(\mathbb{T}^3)}\vvvert  f^{n+1}\vvvert^2\leq C\vvvert  E^n\vvvert  \vvvert  f^{n+1}\vvvert^2.
\end{align*}
Adopting the similar argument to the first term of \eqref{En-local}, we deduce that \eqref{En-local} can be controlled by
$$C\vvvert  E^n\vvvert  \vvvert  f^{n+1}\vvvert^2.$$

(iid). By using Lemma \ref{Gamma}, the contribution of the last line on \eqref{iterate-df} is
\begin{align*}
  C(\vvvert  f^{n}\vvvert  \vvvert  f^{n}\vvvert   _\nu\vvvert  f^{n+1}\vvvert_\nu
  +\vvvert  f^{n+1}\vvvert\vvvert  f^{n}\vvvert_\nu\vvvert  f^{n+1}\vvvert_\nu
  +\vvvert  f^{n}\vvvert  \vvvert  f^{n+1}\vvvert^2_\nu).
\end{align*}

Collecting the above estimates up and summing over $|\beta|+|\gamma|\leq N$, we get
\begin{align}\label{localEn}
  \frac{1}{2}\mathcal{E}_{n+1}'(t):=\,&\frac{1}{2}\frac{d}{dt}\vvvert  f^{n+1}\vvvert^2
  +\frac{1}{2}\vvvert  f^{n+1}\vvvert^2_\nu\nonumber\\
  \leq\, & C \vvvert  E^n\vvvert  \left(\vvvert  f^{n+1}\vvvert
  + \vvvert  f^{n+1}\vvvert^2\right)
   +C(\vvvert  f^n\vvvert+\vvvert  f^{n+1}\vvvert)\vvvert  f^{n+1}\vvvert\nonumber\\
  &+ C(\vvvert  f^{n}\vvvert  \vvvert  f^{n}\vvvert_\nu\vvvert  f^{n+1}\vvvert_\nu
  +\vvvert  f^{n+1}\vvvert   \vvvert  f^{n}\vvvert_\nu\vvvert  f^{n+1}\vvvert_\nu
  +\vvvert  f^{n}\vvvert  \vvvert  f^{n+1}\vvvert^2_\nu)\nonumber\\
  \leq\, & C\{\vvvert  f^{n}\vvvert^2+\vvvert  f^{n+1}\vvvert   ^2+\vvvert  f^{n}\vvvert(\vvvert  f^{n}\vvvert   _\nu^2+\vvvert  f^{n+1}\vvvert_\nu^2+\vvvert  f^{n+1}\vvvert^2)\}\nonumber\\
  &\,+C \vvvert  f^{n+1}\vvvert\vvvert  f^{n}\vvvert_\nu \vvvert  f^{n+1}\vvvert_\nu,
\end{align}
where the last inequality relies on the elliptic estimates established in Proposition \ref{elliprop} that $\vvvert  E^{n}\vvvert   \leq C \vvvert  f^{n}\vvvert   $. To investigate the term $\vvvert  f^{n+1}\vvvert\vvvert  f^{n}\vvvert   _\nu \vvvert  f^{n+1}\vvvert   _\nu$, we notice that
\begin{align*}
   \int_0^t \vvvert  f^{n+1}\vvvert   \vvvert  f^{n}\vvvert   _\nu \vvvert  f^{n+1}\vvvert   _\nu ds
  & \leq \sup_{0\leq s\leq t}\vvvert  f^{n+1}\vvvert  (s)\left\{\int_0^t \vvvert  f^{n}\vvvert   _{\nu}^2  ds\right\}^{1/2}
  \left\{\int_0^t \vvvert  f^{n+1}\vvvert   _{\nu}^2  ds\right\}^{1/2}\\
  & \leq M_0^{1/2} \sup_{0\leq s\leq t}\mathcal{E}_{n+1}(s).
\end{align*}
Hence, integrating \eqref{localEn} over $[0,t]$ yields
\begin{align*}
  \mathcal{E}_{n+1}(t)\leq \mathcal{E}_{n+1}(0)+C \Big\{tM_0+t \sup_{0\leq s\leq t}\mathcal{E}_{n+1}(s)+M_0^{3/2}+M_0^{1/2} \sup_{0\leq s\leq t}\mathcal{E}_{n+1}(s)\Big\}.
\end{align*}
With the initial datum $f^{n+1}(0,x,v)=f_0(x,v)$, we inductively deduce that $\partial_\beta^\gamma f^{n+1}(0,x,v)=\partial_\beta^\gamma f_0(x,v)$ over the number of temporal derivatives. Thus $\mathcal{E}_{n+1}(0)\equiv \mathcal{E} (f_0)\leq M_0/2$. For $0<t\leq T_0$, it holds
\begin{align*}
  (1-CT_0-C M_0^{1/2})\sup_{0\leq t\leq T_0}\mathcal{E}_{n+1}(t)
  \leq\, &\mathcal{E}_{n+1}(0)+C \{M_0 T_0+M_0^{3/2}\}\\
  \leq\, & M_0/2+C \{M_0 T_0+M_0^{3/2}\}.
\end{align*}
If $T_0\leq M_0/2\ll 1$, we have $\sup_n\sup_{0\leq t\leq T_0}\mathcal{E}_{n}(t)\leq M_0$. 
Thus the local-in-time existence in Theorem \ref{LocalTheorem} is
proved.
We denote the solution to the ionic  Vlasov-Poisson-Boltzmann  system \eqref{perturb eqn}--\eqref{poisson} by $f$.

Next, we sketch the argument for proving the uniqueness of solution $f$. If there is another solution $g$ such that $\sup_{0\leq t\leq T_0}\mathcal{E}(g)(t)\leq M_0$, we investigate the $L^2$ energy of $f-g$
via the
equation
\begin{align}\label{f-g}
 & \{\partial_t+v\cdot \nabla_x +E_f \cdot \nabla_v +\nu\}(f-g)\nonumber\\
 &\quad     =(E_f-E_g)\cdot v\sqrt{\mu}-(E_f-E_g)\cdot \nabla_v g
  +\frac{v}{2}\cdot E_f (f-g)+\frac{v}{2}\cdot (E_f-E_g)g\nonumber\\
  &  \quad \,\,\, \, \,\,\, + \mathcal{K}(f-g)+\Gamma(f-g,f)+\Gamma(g,f-g)
\end{align}
with $(f-g)|_{t=0}=f(0,x,v)-g(0,x,v)=0$,
where $(E_f,\phi_f)$ and $(E_g,\phi_g)$ satisfy
\begin{equation*}
\left\{ \begin{aligned}
&E_f=-\nabla_x \phi_f,\\
&\Delta \phi_f=e^{\phi_f}-1-\int_{\mathbb{R}^3}\sqrt{\mu}f dv,
\end{aligned}
\right.
\end{equation*}
and
\begin{equation*}
\left\{ \begin{aligned}
&E_g=-\nabla_x \phi_g,\\
&\Delta \phi_g=e^{\phi_g}-1-\int_{\mathbb{R}^3}\sqrt{\mu}g dv,
\end{aligned}
\right.
\end{equation*}
respectively. Recalling Corollary \ref{U1-U2}, we get the crucial estimate
\begin{align*}
  \|\phi_f-\phi_g\|_{H^2(\mathbb{T}^3)}\leq C \|f-g\|_{L^2(\mathbb{T}^3\times \mathbb{R}^3)},
\end{align*}
for the constant $C>0$ independent of $\phi_f$ and $\phi_g$.
Taking the inner product of \eqref{f-g} with $f-g$ and noticing  Remark \ref{gamma-zero}, we deduce that
\begin{align*}
  &\frac{1}{2}\frac{d}{dt}\|f-g\|^2+\frac{1}{2}\|f-g\|^2_{\nu}\\
  \leq\, & \|E_f-E_g\|\|f-g\|+\sqrt{M_0}\|f-g\|^2_{\nu}+\sqrt{M_0}\|E_f-E_g\|\|f-g\|_\nu+C \|f-g\|^2\\
  &+\bigg\{ \sum_{|\gamma|\leq 2}[\|\partial^\gamma f\|+\|\partial^\gamma g\|]\bigg\}\|f-g\|_\nu^2
  + \bigg\{ \sum_{|\gamma|\leq 2}[\|\partial^\gamma f\|_\nu+\|\partial^\gamma g\|_\nu]\bigg\}\|f-g\|\|f-g\|_\nu\\
  \leq\, & C\left(\vvvert f \vvvert_\nu^2+\vvvert  g\vvvert   _\nu^2+1\right)\|f-g\|^2
  +(C\sqrt{M_0}+\frac{3}{4})\|f-g\|_{\nu}^2.
\end{align*}
For $\eta\ll 1$ and $M_0\ll 1$, it follows that
\begin{align*}
  \frac{1}{2}\|f-g\|^2(t)\leq \frac{1}{2}\|f-g\|^2(0)+C \int_0^t \left(\vvvert f \vvvert_\nu^2+\vvvert  g\vvvert   _\nu^2+1\right)\|f-g\|^2 (s)ds,
\end{align*}
which indicates that
$f(t)\equiv g(t)$ by Gronwall's inequality.

The  continuity of $\mathcal{E}(t)$ comes  from \eqref{localEn}, and the positivity of $F=\mu+\sqrt{\mu}f$ comes from a simple induction over $F^n, n\in \mathbb{N}$. This completes the proof of Theorem \ref{LocalTheorem}.
\end{proof}

\section{Coercivity of $\mathcal{L}$}\label{coerci-section}

To extend the local solution of the ionic  Vlasov-Poisson-Boltzmann  system \eqref{perturb eqn}--\eqref{poisson} to be a global
one, a crucial step is to establish the coercivity of the linearized collision operator $\mathcal{L}$.
We have
\begin{theorem}\label{coercivityThm}
Let $f(t,x,v)$ be a classical solution to the system \eqref{perturb eqn}--\eqref{poisson} satisfying
\eqref{mass}--\eqref{energy}. There exist $M_0\ll 1$ and $\delta_0=\delta_0(M_0)>0$ such that if
\begin{align*}
 \sum_{|\gamma|\leq N}\|\partial^\gamma f(t)\|^2\leq M_0,
\end{align*}
then
\begin{align}\label{coercivity}
  \sum_{|\gamma|\leq N}( \mathcal{L}\partial^\gamma f(t), \partial^\gamma f(t))\geq \delta_0 \sum_{|\gamma|\leq N}\|\partial^\gamma f(t)\|_\nu^2.
\end{align}

\end{theorem}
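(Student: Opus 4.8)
The plan is to run the macro--micro decomposition behind Guo's energy method, the decisive new ingredient being a simultaneous treatment of the macroscopic equations for the ion density and the Poisson--Poincar\'e equation, coupled through the multiplier $e^{\phi}$. First I would reduce \eqref{coercivity} to a purely macroscopic estimate. Writing $\partial^\gamma f=\mathbf{P}\partial^\gamma f+\{\mathbf{I}-\mathbf{P}\}\partial^\gamma f$ with $\mathbf{P}\partial^\gamma f=\{\partial^\gamma a+\partial^\gamma b\cdot v+\partial^\gamma c\,|v|^2\}\sqrt{\mu}$, the spectral gap $\langle\mathcal{L}g,g\rangle\ge\delta\,|\{\mathbf{I}-\mathbf{P}\}g|_\nu^2$ gives $(\mathcal{L}\partial^\gamma f,\partial^\gamma f)\ge\delta\,\|\{\mathbf{I}-\mathbf{P}\}\partial^\gamma f\|_\nu^2$; since $\nu(v)\sim\langle v\rangle^r$ is bounded below on bounded $v$-sets and the Gaussian moments are finite, $\|\mathbf{P}\partial^\gamma f\|_\nu^2\sim\|\partial^\gamma a\|^2+\|\partial^\gamma b\|^2+\|\partial^\gamma c\|^2$. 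Hence it suffices to prove
\begin{align*}
\sum_{|\gamma|\le N}\bigl(\|\partial^\gamma a\|^2+\|\partial^\gamma b\|^2+\|\partial^\gamma c\|^2\bigr)\le C\sum_{|\gamma|\le N}\|\{\mathbf{I}-\mathbf{P}\}\partial^\gamma f\|_\nu^2+C\sqrt{M_0}\sum_{|\gamma|\le N}\|\partial^\gamma f\|_\nu^2,
\end{align*}
and then choose $M_0$ so small that the last sum is absorbed into the right side of \eqref{coercivity}.

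To obtain the macroscopic equations I would insert the decomposition into \eqref{perturb eqn} and project, for each $\gamma$, onto $\sqrt{\mu}$, $v_i\sqrt{\mu}$, $v_iv_j\sqrt{\mu}$ and $v_i|v|^2\sqrt{\mu}$ in $L^2_v$, producing local conservation-type identities for $\partial^\gamma(a,b,c)$ whose right sides split as $l+h$: the $l$-terms are linear in $\{\mathbf{I}-\mathbf{P}\}\partial^{\gamma'}f$ (transport of the microscopic part and $\mathcal{L}$), hence $\le C\sum_{|\gamma'|\le N}\|\{\mathbf{I}-\mathbf{P}\}\partial^{\gamma'}f\|_\nu$; the $h$-terms are quadratic (from $E\cdot\nabla_v f$, $\tfrac{v}{2}\cdot Ef$ and $\Gamma(f,f)$), hence $\le C\sqrt{M_0}\sum_{|\gamma'|\le N}\|\partial^{\gamma'}f\|_\nu$ after invoking $\vvvert\phi\vvvert\le C\vvvert f\vvvert$ from Proposition \ref{elliprop} to control $E$ and its derivatives. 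The obstruction, absent in the electronic case, is that the $b$-equation carries the field, schematically $\partial^0 b_i+\partial^i a-E_i=l_{bi}+h_{bi}$, so $\|E\|$ is not a priori dominated by the microscopic norm and $\nabla a$ cannot be read off directly from this relation.

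The heart of the proof---and the step I expect to be the main obstacle---is the coupling with \eqref{poisson}. Eliminating $\partial^0 b$ through the $\sqrt{\mu}$-equation and combining with the $v_i|v|^2\sqrt{\mu}$-equation yields the elliptic identities
\begin{align*}
-\Delta a+\nabla\cdot E=\nabla\cdot\partial^0 b-\sum_i\partial^i(l_{bi}+h_{bi}),\qquad -\Delta c=-\nabla\cdot(l_c+h_c),
\end{align*}
to which one adjoins $-\Delta\phi-\nabla\cdot E=0$ coming from $E=-\nabla\phi$. I would multiply the first identity by $\varrho_0 a+\varrho_2 c$ (with $\varrho_0:=\int_{\mathbb{R}^3}\mu\,dv$ and $\varrho_2:=\int_{\mathbb{R}^3}|v|^2\mu\,dv$), the second by $\Theta c$ for a large constant $\Theta$, the third by $e^{\phi}-1$, then add and integrate over $\mathbb{T}^3$. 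On the left this generates $\varrho_0\|\nabla a\|^2+\varrho_2\int_{\mathbb{T}^3}\nabla a\cdot\nabla c\,dx+\Theta\|\nabla c\|^2+\int_{\mathbb{T}^3}e^{\phi}|\nabla\phi|^2\,dx+\|\nabla\cdot E\|^2$, where the $\nabla\cdot E$ contributions from the first and third identities recombine into $\|\nabla\cdot E\|^2$ via the Poisson relation $e^{\phi}-1=-\nabla\cdot E+\varrho_0 a+\varrho_2 c$, the integral $\int_{\mathbb{T}^3}e^{\phi}|\nabla\phi|^2\,dx$ is genuinely coercive because $e^{\phi}$ is bounded below (using $\phi\in H^2(\mathbb{T}^3)\hookrightarrow L^\infty$ from Section \ref{ellip-section}), and the cross term $\varrho_2\int\nabla a\cdot\nabla c$ is absorbed by Young's inequality once $\Theta$ is large relative to $\varrho_0,\varrho_2$; on the right, integrating by parts gives a bound $C(\|\nabla a\|+\|\nabla c\|)(\|\partial^0 b\|+\|l_{bi}+h_{bi}\|+\|l_c+h_c\|)$. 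Dividing then yields $\|\nabla a\|+\|\nabla c\|\le C(\|\partial^0 b\|+\|l_{bi}+h_{bi}\|+\|l_c+h_c\|)$. The higher-order case $|\gamma|\ge1$ follows the same pattern after commuting $\partial^\gamma$ through \eqref{poisson}, the commutator $\partial^\gamma(e^{\phi})-e^{\phi}\partial^\gamma\phi$ being a lower-order term controlled via Proposition \ref{elliprop}; this top-order bookkeeping is the principal technical difficulty.

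Finally, $\|\partial^0 b\|$ and $\|\nabla b\|$ follow from the $b$- and $v_iv_j\sqrt{\mu}$-equations once $\nabla a$, $\nabla c$ and $E$ (reconstructed from $\nabla\cdot E$ together with $E=-\nabla\phi$ being curl-free) are under control. For $|\gamma|\ge1$ the periodic functions $\partial^\gamma(a,b,c)$ are mean-zero on $\mathbb{T}^3$, so Poincar\'e's inequality upgrades the gradient bounds to full $L^2$ bounds, while for $|\gamma|=0$ the conservation laws \eqref{mass}--\eqref{energy} supply the zero mean (respectively smallness) of $a,b,c$ and the energy identity controls the $\phi$-dependent term. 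Summing over $|\gamma|\le N$, first fixing $\Theta$ large and then $M_0$ small enough that $C\sqrt{M_0}\sum_{|\gamma|\le N}\|\partial^\gamma f\|_\nu^2$ is absorbed on the left, gives the desired macroscopic estimate, hence \eqref{coercivity} with $\delta_0=\delta_0(M_0)>0$.
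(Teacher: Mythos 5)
Your overall route is the paper's own: reduce \eqref{coercivity} via the spectral gap of $\mathcal{L}$ to the macroscopic bound \eqref{abc}; derive the local balance laws \eqref{lc}--\eqref{la} by projecting onto the basis $[1,v_i,v_iv_j,v_i^2,v_i|v|^2]\sqrt{\mu}$ with the $l+h$ splitting estimated exactly as in Lemmas \ref{lemma-l}--\ref{lemma-h} (microscopic part in $\|\cdot\|_\nu$, quadratic part with a factor $\sqrt{M_0}$ via Proposition \ref{elliprop}); and, for the purely spatial derivatives of $a$, test the three elliptic identities with $\varrho_0a+\varrho_2c$, a large multiple of $c$, and $e^{\phi}-1$. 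Your key observations -- the recombination into $\|\nabla\cdot\partial^{\gamma'}E\|^2$ through $e^{\phi}-1=-\nabla\cdot E+\varrho_0a+\varrho_2c$, the coercivity of $\int_{\mathbb{T}^3}e^{\phi}|\nabla\partial^{\gamma'}\phi|^2dx$ from $\phi\in H^2\hookrightarrow L^\infty$, the absorption of $\nabla a\cdot\nabla c$ by the large constant, the commutator $\partial^{\gamma'}(e^{\phi})-e^{\phi}\partial^{\gamma'}\phi$ treated as a small lower-order term, and the use of the conservation laws plus Poincar\'e for the zero modes -- are precisely the content of the paper's Step~3 and Lemma \ref{lemma-ac}.

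The one genuine gap is the ordering of the $b$-estimates, which as written is circular. Your coupled estimate gives $\|\nabla a\|+\|\nabla c\|\le C(\|\partial^0 b\|+\cdots)$, yet you propose to obtain $\|\partial^0 b\|$ ``finally'' from the $v_i\sqrt{\mu}$-equation \eqref{lbi}, i.e.\ using $\nabla a$ and $E$; since $\|E\|$ is itself only controlled through $a$ and $c$ (Proposition \ref{elliprop}), none of these terms carries a small factor and the loop cannot be closed by absorption. The repair -- and what the paper does first, in its Step~1 -- is to bound $\nabla\partial^{\gamma}b$ for $|\gamma|\le N-1$ \emph{before} touching $a$, from the elliptic identity for $\Delta\partial^{\gamma}b_i$ built solely out of the $v_i^2$- and $v_iv_j$-equations \eqref{li}, \eqref{lij}, in which the $\partial^0\partial^i\partial^{\gamma}c$ terms are eliminated, so the right-hand side involves only $l$- and $h$-terms; then $\int_{\mathbb{T}^3}\partial^{\gamma}b\,dx=0$ and Poincar\'e control every occurrence of $\|\partial^0\partial^{\gamma'}b\|$ on the right of \eqref{spatial-a} (for $|\gamma'|\ge1$ these are spatial gradients of lower-order derivatives of $b$). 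Only the purely temporal top-order derivative $\partial_t^{N}b$ is recovered at the very end from \eqref{lbi}, after $a$, $c$ and hence $E$ have been bounded (the paper's Step~4). With this reordering your argument coincides with the paper's proof.
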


\begin{proof}

Recall the decomposition $f=\mathbf{P}f+\{\mathbf{I}-\mathbf{P}\}f$ in Section \ref{prelimi-section} and expand the macroscopic part of $f$ as $$\mathbf{P}f(t,x,v)=\{a^f(t,x)+b^f(t,x)\cdot v+c^f(t,x)|v|^2\}\sqrt{\mu}.$$
In the sequel, we denote $[a,b,c]:=[a^f,b^f,c^f]$ for notational simplification.
For the time-space derivatives $\partial^\gamma=\partial_t^{\gamma_0}\partial_{x_1}^{\gamma_1}\partial_{x_2}^{\gamma_2}\partial_{x_3}^{\gamma_3}$, we immediately know that $\partial^\gamma \mathbf{P}f=\mathbf{P}\partial^\gamma f$, and
\begin{align*}
 \|\partial^\gamma \mathbf{P}f\|^2+\|\partial^\gamma \{\mathbf{I}-\mathbf{P}\}f\|^2=\|\partial^\gamma f\|^2.
\end{align*}
Moreover, there exists $C>1$ such that
\begin{align*}
  \frac{1}{C}\|\partial^\gamma \mathbf{P}f\|_\nu^2\leq \|\partial^\gamma a\|^2+\|\partial^\gamma b\|^2+\|\partial^\gamma c\|^2\leq C\|\partial^\gamma \mathbf{P}f\|^2.
\end{align*}
To prove \eqref{coercivity} in Theorem \ref{coercivityThm}, since there exists a $\delta>0$ such that
\begin{align*}
  ( \mathcal{L}\partial^\gamma f,\partial^\gamma f)\geq \delta \|\{\mathbf{I}-\mathbf{P}\}\partial^\gamma f\|_\nu^2,
\end{align*}
it suffices to show that
\begin{align*}
  \sum_{|\gamma|\leq N}\|\mathbf{P}\partial^\gamma f(t)\|_\nu\leq C \sum_{|\gamma|\leq N}\|\{\mathbf{I}-\mathbf{P}\}\partial^\gamma f(t)\|_\nu.
\end{align*}
We thus only need to verify that
\begin{align}\label{abc}
 \sum_{|\gamma|\leq N}\{\|\partial^\gamma a\|+\|\partial^\gamma b\|+\|\partial^\gamma c\|\}
 \leq C\sum_{|\gamma|\leq N}\|\{\mathbf{I}-\mathbf{P}\}\partial^\gamma f(t)\| +C\sqrt{M_0}\sum_{|\gamma|\leq N}\|\partial^\gamma f(t)\|
\end{align}
holds for $M_0\ll 1$.

To prove \eqref{abc}, we present several lemmas below.
\begin{lemma}\label{lemma-ac}
Under the same condition as in Theorem \ref{coercivityThm}, we have
\begin{gather*}
  \left|\int_{\mathbb{T}^3}a(t,x)dx\right|
  +\left|\int_{\mathbb{T}^3}c(t,x)dx\right|\leq C \{\|\phi\|^2+\|\nabla_x \phi\|^2\},\\
  \int_{\mathbb{T}^3}b(t,x)dx\equiv 0.
\end{gather*}
\end{lemma}
\begin{proof}
Plugging $\mathbf{P}f=\{a+b\cdot v+c|v|^2\}\sqrt{\mu}$ into the conservation laws \eqref{mass}--\eqref{energy}, we deduce that
\begin{gather*}
   \varrho_0 \int_{\mathbb{T}^3} a dx+\varrho_2 \int_{\mathbb{T}^3} c dx=0,\\
   \varrho_2 \int_{\mathbb{T}^3} b \,dx=0,\\
   \varrho_2 \int_{\mathbb{T}^3} a dx+\varrho_4 \int_{\mathbb{T}^3} c dx+2\int_{\mathbb{T}^3} \phi e^\phi dx+\int_{\mathbb{T}^3} |\nabla \phi|^2dx=0,
\end{gather*}
where $\varrho_i=\int_{\mathbb{R}^3} |v|^i \mu(v)dv, i=1,\dots, 4$. Note that $\varrho_0\varrho_4\neq \varrho_2^2 $ and
 $\varrho_0\varrho_4, \varrho_2^2>0$. First, we have
$\int_{\mathbb{T}^3} b \,dx=0$ and
\begin{align*}
  \left|\int_{\mathbb{T}^3}adx\right|+\left|\int_{\mathbb{T}^3}cdx\right|
  \leq C\int_{\mathbb{T}^3}\phi e^\phi dx+\int_{\mathbb{T}^3} |\nabla \phi|^2 dx.
\end{align*}
By Proposition \ref{UH2}, when $M_0\ll 1$, it holds
\begin{align*}
  \|\phi\|_{L^\infty(\mathbb{T}^3)}\leq C\|\phi\|_{H^2(\mathbb{T}^3)}\leq C \|f\|_{L^2(\mathbb{T}^3\times \mathbb{R}^3)}\leq C \sqrt{M_0}\ll 1.
\end{align*}
Thus, by \eqref{neutural},
\begin{align*}
  \int_{\mathbb{T}^3}\phi e^\phi dx=&\int_{\mathbb{T}^3}\left(\phi e^\phi -e^\phi+1\right)dx=\int_{\mathbb{T}^3}\left\{\phi^2+(\phi-1)[e^\phi-\phi-1] \right\}dx\\
  \leq &\int_{\mathbb{T}^3}\phi^2+C(\|\phi\|_\infty+1)|\phi|^2 dx\leq C \int_{\mathbb{T}^3}\phi^2 dx,
\end{align*}
which implies the desired result.
\end{proof}

Rewrite the perturbed Vlasov-Boltzmann equation \eqref{perturb eqn} as
\begin{align}\label{pert-lh}
  \{\partial_t +v\cdot \nabla_x\}\mathbf{P}f-E\cdot v\sqrt{\mu}=l(\{\mathbf{I}-\mathbf{P}\}f)+h(f),
\end{align}
where
\begin{align*}
  l(\{\mathbf{I}-\mathbf{P}\}f):=&-\{\partial_t +v\cdot \nabla_x+\mathcal{L}\}\{\mathbf{I}-\mathbf{P}\}f,\\
  h(f):=&-E\cdot \nabla_v f+\frac{v}{2}\cdot E f+\Gamma(f,f).
\end{align*}
We plug $\mathbf{P}f=\{a+b\cdot v+c|v|^2\}\sqrt{\mu}$ into the left-hand side of \eqref{pert-lh} (denote $\partial^0=\partial_t$, $\partial^j=\partial_{x_j}$, $\partial^i=\partial_{x_i}$) to get
\begin{align*}
  \bigg[\{\partial^i c\} v_i|v|^2+\{\partial^0 c+\partial^i b_i\}v_i^2+\sum_{i<j}\{\partial^i b_j+\partial^j b_i\}v_iv_j
  +\{\partial^0b_i+\partial^i a-E_i\}v_i+\partial^0 a\bigg]\sqrt{\mu}.
\end{align*}
Under the basis $[1,v_i, v_iv_j,v_i^2,v_i|v|^2]\sqrt{\mu}$ ($1\leq i<j\leq 3$), we define
\begin{align}\label{def-component-l+h}
 l+h:=  (l_c+h_c)\cdot v|v|^2 \sqrt{\mu}+(l_i+h_i)v_i^2 \sqrt{\mu}+\sum_{i< j}(l_{ij}+h_{ij})v_iv_j \sqrt{\mu}+(l_{bi}+h_{bi})v_i\sqrt{\mu}+(l_a+h_a)\sqrt{\mu}.
\end{align}

Comparing with the coefficients on both sides of \eqref{pert-lh}, we deduce that
\begin{align}
  &\nabla_x c=l_c+h_c,\label{lc}\\
  &\partial^0 c+\partial^i b_i=l_i+h_i,\label{li}\\
  &\partial^i b_j+\partial^j b_i=l_{ij}+h_{ij},\,\,i\neq j,\label{lij}\\
  &\partial^0 b_i+\partial^i a-E_i=l_{bi}+h_{bi},\label{lbi}\\
  &\partial^0 a=l_a+h_a.\label{la}
\end{align}
\begin{lemma}\label{lemma-l}
Let $\gamma=[\gamma_0,\gamma_1,\gamma_2,\gamma_3]$. Then for any $1\leq i\neq j\leq 3$, it holds
\begin{align*}
 \sum_{|\gamma|\leq N-1}\|\partial^\gamma l_c\|+\|\partial^\gamma l_i\|+\|\partial^\gamma l_{ij}\|+\|\partial^\gamma l_{bi}\|+\|\partial^\gamma l_a\|\leq C \sum_{|\gamma|\leq N}\|\{\mathbf{I}-\mathbf{P}\}\partial^\gamma f\|.
\end{align*}
\end{lemma}
\begin{proof}
Denote $\{\epsilon_n(v), n=1, \ldots, 13\}$ to be the basis $[1, v_i, v_i^2, v_iv_j, v_i|v|^2]\sqrt{\mu}$. Then for fixed $(t,x)$, $\partial^\gamma l_c$, $\partial^\gamma l_i$, $\partial^\gamma l_{ij}$, $\partial^\gamma l_{bi}$ and $\partial^\gamma l_a$ take the forms of the linear combinations of
\begin{align*}
  \int_{\mathbb{R}^3}\partial^\gamma l(\{\mathbf{I}-\mathbf{P}\}f)(t,x,v)\cdot \epsilon_n (v)dv.
\end{align*}
By H\"{o}lder's inequality and Lemma \ref{K}, it holds
\begin{align*}
 &\left\|\int_{\mathbb{R}^3}\partial^\gamma l(\{\mathbf{I}-\mathbf{P}\}f)(t,x,v)\cdot \epsilon_n (v)dv\right\|^2\\
 =\,&\left\|\int_{\mathbb{R}^3} \{\partial_t +v\cdot \nabla_x +\mathcal{L}\}[\{\mathbf{I}-\mathbf{P}\}\partial^\gamma f](t,x,v)\cdot \epsilon_n (v)dv\right\|^2\\
 \leq\, & C\int_{\mathbb{T}^3\times \mathbb{R}^3}|\epsilon_n(v)|\left(|\{\mathbf{I}-\mathbf{P}\}\partial_t \partial^\gamma f|^2+|v|^2
 |\{\mathbf{I}-\mathbf{P}\}\nabla_x \partial^\gamma f|^2+|\mathcal{L}\{\mathbf{I}-\mathbf{P}\} \partial^\gamma f|^2
 \right)\\
 \leq\, & C\|\{\mathbf{I}-\mathbf{P}\}\partial^0 \partial^\gamma f\|^2+C\|\{\mathbf{I}-\mathbf{P}\}\nabla_x \partial^\gamma f\|^2+\int_{\mathbb{T}^3\times \mathbb{R}^3}|\nu^2\epsilon_n(v)|\left(|\nu^{-1}\mathcal{L}\{\mathbf{I}-\mathbf{P}\} \partial^\gamma f|^2
 \right)\\
 \leq\, & C \left\{ \|\{\mathbf{I}-\mathbf{P}\}\partial^0 \partial^\gamma f\|^2+\|\{\mathbf{I}-\mathbf{P}\}\nabla_x \partial^\gamma f\|^2+\|\{\mathbf{I}-\mathbf{P}\} \partial^\gamma f\|^2\right\}.
\end{align*}
Hence the lemma is proved.
\end{proof}
\begin{lemma}\label{lemma-h}
Let $\gamma=[\gamma_0,\gamma_1,\gamma_2,\gamma_3]$. Suppose that
$\sum_{|\gamma|\leq N}\|\partial^\gamma f(t)\|^2\leq M_0$ holds for some $M_0>0$,
then for any $1\leq i\neq j\leq 3$,
\begin{align*}
 \sum_{|\gamma|\leq N}\|\partial^\gamma h_c\|+\|\partial^\gamma h_i\|+\|\partial^\gamma h_{ij}\|+\|\partial^\gamma h_{bi}\|+\|\partial^\gamma h_a\|\leq C \sqrt{M_0}\sum_{|\gamma|\leq N}\|\partial^\gamma f\|.
\end{align*}

\end{lemma}
\begin{proof}
Recall the basis $\{\epsilon_n(v), n=1, \ldots, 13\}$. Then for fixed $(t,x)$, $\partial^\gamma h_c$, $\partial^\gamma h_i$, $\partial^\gamma h_{ij}$, $\partial^\gamma h_{bi}$ and $\partial^\gamma h_a$ take the forms of the linear combinations of
\begin{align*}
  \int_{\mathbb{R}^3}\partial^\gamma h(f)(t,x,v)\cdot \epsilon_n (v)dv
   = \int_{\mathbb{R}^3}\partial^\gamma \left(-E\cdot \nabla_v f+\frac{v}{2}\cdot E f+\Gamma(f,f)\right)(t,x,v)\cdot \epsilon_n (v)dv.
\end{align*}
Note that
\begin{align*}
  -\int_{\mathbb{R}^3}\partial^\gamma (E\cdot \nabla_v f)\cdot \epsilon_n (v)dv
 =\,&\int_{\mathbb{R}^3} \partial^\gamma (Ef)\cdot \nabla_v \epsilon_n(v)dv\\
 \leq \,& \sum_{\gamma_1}C_\gamma^{\gamma_1}\int_{\mathbb{R}^3}|\partial^{\gamma_1}E||\partial^{\gamma-\gamma_1}f||\nabla_v \epsilon_n (v)|dv\\
 \leq \,& C\sum_{\gamma_1}|\partial^{\gamma_1}E|\left\{ \int_{\mathbb{R}^3} |\mu^{1/8}\partial^{\gamma-\gamma_1}f|^2 dv\right\}^{1/2}.
\end{align*}
Thus
\begin{align*}
   \int_{\mathbb{T}^3}\left|\int_{\mathbb{R}^3}\partial^\gamma \left(-E\cdot \nabla_v f+\frac{v}{2}\cdot E f\right)\cdot \epsilon_n (v)dv\right|^2 dx
  \leq   C\sum_{\gamma_1}\int\!\!\int_{\mathbb{T}^3\times \mathbb{R}^3}|\partial^{\gamma_1}E|^2  |\mu^{1/8}\partial^{\gamma-\gamma_1}f|^2 dv dx.
\end{align*}
If $|\gamma_1|\leq N/2$ $(N\geq 4)$, by $H^2(\mathbb{T}^3)\hookrightarrow L^\infty(\mathbb{T}^3)$ and Proposition \ref{elliprop}, we have
\begin{align*}
  \sup_x |\partial^{\gamma_1}E|\leq C\sum_{|\gamma|\leq N}\|\partial^{\gamma}E\|\leq C \sum_{|\gamma|\leq N}\|\partial^{\gamma}f\|\leq C \sqrt{M_0},
\end{align*}
and
\begin{align*}
  \left(\int\!\!\int_{\mathbb{T}^3\times \mathbb{R}^3}  |\mu^{1/8}\partial^{\gamma-\gamma_1}f|^2 dv dx\right)^{1/2}\leq C \sum_{|\gamma|\leq N}\|\partial^\gamma f\|.
\end{align*}
If $|\gamma_1|\geq N/2$, then  $|\gamma-\gamma_1|\leq N/2$. Thus, by $H^2(\mathbb{T}^3)\hookrightarrow L^\infty(\mathbb{T}^3)$, it holds
\begin{align*}
  \sup_x \int_{\mathbb{R}^3}|\partial^{\gamma-\gamma_1}f|^2 dv\leq \int_{\mathbb{R}^3}\sup_x|\partial^{\gamma-\gamma_1}f|^2 dv
  \leq C \sum_{|\gamma|\leq N}\|\partial^{\gamma}f\|^2\leq C (\sqrt{M_0})^2,
\end{align*}
and
\begin{align*}
  \sup_x |\partial^{\gamma_1}E|\leq C\sum_{|\gamma|\leq N}\|\partial^{\gamma}E\|\leq C \sum_{|\gamma|\leq N}\|\partial^{\gamma}f\|.
\end{align*}
Hence the contribution of $-E\cdot \nabla_v f+\frac{v}{2}\cdot E f$ is $C \sqrt{M_0}\sum_{|\gamma|\leq N}\|\partial^\gamma f\|$.

For the term $\Gamma(f,f)$, by Lemma \ref{LGamma}, we deduce that
\begin{align*}
  \left\|\int_{\mathbb{R}^3} \partial^\gamma \Gamma(f,f)\cdot \epsilon_n (v)dv\right\|
 &\,\leq \sum C_\gamma^{\gamma_1}\left\|\int_{\mathbb{R}^3} \Gamma( \partial^{\gamma_1}f,\partial^{\gamma-\gamma_1}f)\cdot \epsilon_n (v)dv\right\|\\
 &\, \leq C \sqrt{M_0}\sum_{|\gamma|\leq N}\|\partial^{\gamma}f\|.
\end{align*}
Thus, the proof is complete.
\end{proof}

Based on the above facts, we come back to prove \eqref{abc}, i.e.,
\begin{align*}
 \sum_{|\gamma|\leq N}\{\|\partial^\gamma a\|+\|\partial^\gamma b\|+\|\partial^\gamma c\|\}
 \leq C\sum_{|\gamma|\leq N}\|\{\mathbf{I}-\mathbf{P}\}\partial^\gamma f(t)\| +C\sqrt{M_0}\sum_{|\gamma|\leq N}\|\partial^\gamma f(t)\|
\end{align*}
holds for $M_0\ll 1$.
We divide the proof into  several steps as follows.

\smallskip
\emph{Step 1. The estimate of $\nabla \partial^\gamma b$, $|\gamma|\leq N-1$.}
By using the identities \eqref{li} and \eqref{lij}, we deduce that (as in \cite{Guo2003})
\begin{align*}
  \Delta \partial^\gamma b_i
  =\,&\sum_{j\neq i}[\partial^{jj}\partial^\gamma b_i] +\partial^{ii}\partial^\gamma b_i\\
  =\,& \sum_{j\neq i}[-\partial^{ij}\partial^\gamma b_j+\partial^j \partial^\gamma l_{ij}+\partial^j \partial^\gamma h_{ij}]
  +[\partial^i \partial^\gamma l_{i}+\partial^i \partial^\gamma h_{i}-\partial^0\partial^i\partial^\gamma c]\\
  =\,& \sum_{j\neq i} [\partial^0\partial^i\partial^\gamma c-\partial^i \partial^\gamma l_j-\partial^i \partial^\gamma h_j]
  -\partial^0\partial^i\partial^\gamma c\\
  &+\sum_{j\neq i}[\partial^j \partial^\gamma l_{ij}+\partial^j \partial^\gamma h_{ij}]+\partial^i \partial^\gamma l_{i}+\partial^i \partial^\gamma h_{i}\\
  =\,&\partial^0 \partial^i \partial^\gamma c+\sum_{j\neq i}[-\partial^i \partial^\gamma l_j-\partial^i \partial^\gamma h_j+\partial^i \partial^\gamma l_{ij}+\partial^i \partial^\gamma h_{ij}]+\partial^i \partial^\gamma l_i+\partial^i \partial^\gamma h_i\\
  =\,& - \partial^{ii} \partial^\gamma b_i+\sum_{j\neq i}[-\partial^i \partial^\gamma l_j-\partial^i \partial^\gamma h_j+\partial^i \partial^\gamma l_{ij}+\partial^i \partial^\gamma h_{ij}]+2\{\partial^i \partial^\gamma l_i+\partial^i \partial^\gamma h_i\}.
\end{align*}
Multiplying the above equality   by $\partial^\gamma b_i$, we get
\begin{align*}
  \|\nabla \partial^\gamma b_i\|\leq &\,C \sum_{i,j}\{\|\partial^\gamma l_j\|+\|\partial^\gamma h_j\|+\|\partial^\gamma l_{ij}\|+\|\partial^\gamma h_{ij}\|+\|\partial^\gamma l_i\|+\|\partial^\gamma h_i\|\}\\
  \leq & \,C \sum_{|\gamma|\leq N}\|\{\mathbf{I}-\mathbf{P}\}\partial^\gamma f(t)\| +C\sqrt{M_0}\sum_{|\gamma|\leq N}\|\partial^\gamma f(t)\|,
\end{align*}
where we have   used Lemmas \ref{lemma-l} and   \ref{lemma-h}.

\smallskip
\emph{Step 2. The estimate of $\partial^\gamma c$, $|\gamma|\leq N$.}
For $|\gamma'|\leq N-1$, from the identities \eqref{lc} and \eqref{li}, it holds
\begin{align*}
  \|\partial^0 \partial^{\gamma'} c\|+\|\nabla \partial^{\gamma'} c\|
  &\leq \|\partial^i \partial^{\gamma'} b_i\|+\| \partial^{\gamma'} l_i\|+\| \partial^{\gamma'} h_i\|+\| \partial^{\gamma'} l_c\|+\| \partial^{\gamma'} h_c\|.
\end{align*}
In addition, applying Poincar\'{e}'s inequality, Lemma \ref{lemma-ac} and Proposition \ref{UH2}, we get
\begin{align*}
  \|c\|\leq C\bigg\{\|\nabla c\|+\bigg|\int_{\mathbb{T}^3} c dx\bigg|\bigg\}\leq   C \{\|\nabla c\|
  +\|\phi\|^2+\|\nabla_x \phi\|^2\}
  \leq   C \{\|\nabla c\|+\|f\|^2\}.
\end{align*}
Combining the above estimates on $c(t,x)$ and its temporal and spatial derivatives yields
\begin{align*}
 \sum_{|\gamma|\leq N} \|\partial^\gamma c\|\leq  C \sum_{|\gamma|\leq N}\|\{\mathbf{I}-\mathbf{P}\}\partial^\gamma f(t)\| +C\sqrt{M_0}\sum_{|\gamma|\leq N}\|\partial^\gamma f(t)\|.
\end{align*}
Here,  Lemma \ref{lemma-l} and Lemma \ref{lemma-h} have been used.

\smallskip
\emph{Step 3. The estimate of $\partial^\gamma a$, $|\gamma|\leq N$.}
For $|\gamma'|\leq N-1$, from the identity \eqref{la}, we have
\begin{align*}
  \|\partial_t \partial^{\gamma'} a\|\leq &\|\partial^{\gamma'} l_a\|+\|\partial^{\gamma'} h_a\|
  \leq   C \sum_{|\gamma|\leq N}\|\{\mathbf{I}-\mathbf{P}\}\partial^\gamma f(t)\| +C\sqrt{M_0}\sum_{|\gamma|\leq N}\|\partial^\gamma f(t)\|.
\end{align*}
We now focus on purely spatial derivatives of $a(t,x)$.
Let $\gamma'=[0, \gamma_1',\gamma_2',\gamma_3']\neq 0$ ($0<|\gamma'|\leq N-1$). Taking $\partial^i \partial^{\gamma'}$ to the three identities \eqref{lbi}, \eqref{lc} and $E_i=-\partial^i \phi$, and then summing over $i=1,2,3$, we get
\begin{gather}
  -\Delta \partial^{\gamma'}a+\nabla\cdot \partial^{\gamma'} E=\nabla\cdot \partial^0 \partial^{\gamma'} b-\sum_i\partial^i \partial^{\gamma'}(l_{bi}+h_{bi}),\label{lbi'}\\
  \Delta \partial^{\gamma'} c=\nabla\cdot \partial^{\gamma'} (l_c+h_c),\label{lc'}
\end{gather}
and
\begin{align}\label{phi'}
  \Delta \partial^{\gamma'} \phi=- \nabla\cdot \partial^{\gamma'} E.
\end{align}
We take \eqref{lbi'}$\times [\varrho_0 \partial^{\gamma'} a+\varrho_2 \partial^{\gamma'} c]-$\eqref{lc'}$\times [K \partial^{\gamma'} c]-$\eqref{phi'}$\times [\partial^{\gamma'}(e^\phi-1)]$, where the constant $K>0$ is sufficiently large, and then integrate it by parts over $\mathbb{T}^3$ to get
\begin{align}\label{spatial-a}
  &\int_{\mathbb{T}^3}|\nabla \partial^{\gamma'}a|^2+K |\nabla \partial^{\gamma'} c|^2+(\nabla \partial^{\gamma'} a\cdot \nabla \partial^{\gamma'} c)+(\nabla \cdot\partial^{\gamma'} E)^2-\Delta \partial^{\gamma'}\phi \partial^{\gamma'}(e^\phi) dx\nonumber\\
  &\,\quad  \leq C\bigg(\|\partial^0 \partial^{\gamma'} b\|+\sum_i\|\partial^{\gamma'}(l_{bi}+h_{bi})\|+\|\partial^{\gamma'}(l_{c}+h_{c})\|\bigg)
  \left(\|\nabla \partial^{\gamma'} a\|+\|\nabla \partial^{\gamma'} c\|\right).
\end{align}
For the term $\Delta \partial^{\gamma'}\phi \partial^{\gamma'}(e^\phi)$ on the left-hand side of \eqref{spatial-a}, we have
\begin{align}
  -\int_{\mathbb{T}^3}\Delta \partial^{\gamma'}\phi \partial^{\gamma'}(e^\phi) dx
  =\,&\int_{\mathbb{T}^3} \nabla \partial^{\gamma'}\phi \cdot \partial^{\gamma'}(\nabla \phi e^\phi)dx\nonumber\\
= \,&\int_{\mathbb{T}^3}|\nabla \partial^{\gamma'}\phi|^2 e^\phi dx+\int_{\mathbb{T}^3}\nabla \partial^{\gamma'}\phi \cdot \mathcal{V} e^\phi dx,\label{star1}
\end{align}
where every component of $\mathcal{V}$ is a finite sum: each element therein is a product of at least two terms of $\partial^{\gamma''}\phi$, $0<|\gamma''|\leq|\gamma'|$. For example, if $|\gamma'|=1$, the corresponding $\mathcal{V}=\nabla \phi \partial^{\gamma'}\phi$; if
$|\gamma'|=2$, the corresponding $\mathcal{V}=\sum\partial^{i}\nabla\phi \partial^k \phi +\partial^{ij}\phi \nabla\phi+\partial^i \phi\partial^j \phi\nabla \phi$.
Recalling Proposition \ref{elliprop}, we have
$$
\|\partial^{\gamma''}\phi\|_{L^\infty(\mathbb{T}^3)}\leq C \|\partial^{\gamma''}\phi\|_{H^2(\mathbb{T}^3)}\leq C\sqrt{M_0}\ll 1.
$$
 Hence it holds
\begin{align}
  \int_{\mathbb{T}^3}\nabla \partial^{\gamma'}\phi \cdot\mathcal{V} e^\phi dx
  \leq \,&C\sqrt{M_0} \sum_{0<|\gamma''|\leq |\gamma'|}\int_{\mathbb{T}^3}|\nabla \partial^{\gamma'}\phi|| \partial^{\gamma''}\phi| e^\phi dx\nonumber\\
  \leq \,& C\sqrt{M_0} \int_{\mathbb{T}^3}|\nabla \partial^{\gamma'}\phi|^2 e^\phi dx
  +C\sqrt{M_0}\sum_{0<|\gamma''|\leq |\gamma'|} \int_{\mathbb{T}^3}| \partial^{\gamma''}\phi|^2 e^\phi dx.\label{star2}
\end{align}
Note that $\int_{\mathbb{T}^3}\partial^{\gamma''} \phi dx=0$, by Poincar\'{e}'s inequality and $e^\phi\sim 1$ (due to $\sup_{t\geq 0}\|\phi\|_{L^\infty(\mathbb{T}^3)}=O(1)$), it holds
\begin{align*}
  \int_{\mathbb{T}^3}| \partial^{\gamma''}\phi|^2 e^\phi dx\leq\,& C\|\partial^{\gamma''}\phi\|^2\leq C\|\nabla \partial^{\gamma''}\phi\|^2 \leq \cdots \leq C\|\nabla^{(|\gamma'|+1)} \phi\|^2\\
  \leq\, & C \sum_{|\varpi|=|\gamma'|}
  \int_{\mathbb{T}^3}| \nabla\partial^{\varpi}\phi|^2 e^\phi dx,
\end{align*}
which implies that
\begin{align}
  \sum_{0<|\gamma''|\leq |\gamma'|} \int_{\mathbb{T}^3}| \partial^{\gamma''}\phi|^2 e^\phi dx\leq C \sum_{|\varpi|=|\gamma'|}
  \int_{\mathbb{T}^3}| \nabla\partial^{\varpi}\phi|^2 e^\phi dx.\label{star3}
\end{align}
Hence we obtain from \eqref{star1}--\eqref{star3} that
\begin{align*}
  -\int_{\mathbb{T}^3}\Delta \partial^{\gamma'}\phi \partial^{\gamma'}(e^\phi) dx\,
  &\geq \int_{\mathbb{T}^3}|\nabla \partial^{\gamma'}\phi|^2 e^\phi dx-\left|\int_{\mathbb{T}^3}\nabla \partial^{\gamma'}\phi \cdot \mathcal{V} e^\phi dx\right|\\
  &\geq (1-C\sqrt{M_0}) \int_{\mathbb{T}^3}|\nabla \partial^{\gamma'}\phi|^2 e^\phi dx
  -C\sqrt{M_0} \sum_{0<|\gamma''|\leq |\gamma'|}\int_{\mathbb{T}^3}| \partial^{\gamma''}\phi|^2 e^\phi dx\\
  &\geq (1-C\sqrt{M_0}) \int_{\mathbb{T}^3}|\nabla \partial^{\gamma'}\phi|^2 e^\phi dx
  -C\sqrt{M_0} \sum_{|\varpi|=|\gamma'|}
  \int_{\mathbb{T}^3}| \nabla\partial^{\varpi}\phi|^2 e^\phi dx.
\end{align*}
Summing the above inequality over $0<|\gamma'|\leq N-1$ yields
\begin{align*}
  &\sum_{0<|\gamma'|\leq N-1}\int_{\mathbb{T}^3}-\Delta \partial^{\gamma'}\phi \partial^{\gamma'}(e^\phi) dx\\
  \geq\,& (1-C\sqrt{M_0}) \sum_{0<|\gamma'|\leq N-1}\int_{\mathbb{T}^3}|\nabla \partial^{\gamma'}\phi|^2 e^\phi dx
  -C\sqrt{M_0} \sum_{0<|\gamma'|\leq N-1}\sum_{|\varpi|=|\gamma'|}
  \int_{\mathbb{T}^3}| \nabla\partial^{\varpi}\phi|^2 e^\phi dx\\
  \geq\,& \frac{1}{2} \sum_{0<|\gamma'|\leq N-1}\int_{\mathbb{T}^3}|\nabla \partial^{\gamma'}\phi|^2 e^\phi dx.
\end{align*}
Therefore, if we take the sum of \eqref{spatial-a} over $0<|\gamma'|\leq N-1$, we deduce that
\begin{align}\label{spatial-a-sum}
  &\sum_{0<|\gamma'|\leq N-1}\int_{\mathbb{T}^3}|\nabla \partial^{\gamma'}a|^2+K |\nabla \partial^{\gamma'} c|^2+(\nabla \partial^{\gamma'} a\cdot \nabla \partial^{\gamma'} c)+(\nabla \cdot\partial^{\gamma'} E)^2
  + \frac{1}{2} |\nabla \partial^{\gamma'}\phi|^2 e^\phi dx\nonumber\\
  &\,\qquad  \leq C\sum_{0<|\gamma'|\leq N-1}\Big(\|\partial^0 \partial^{\gamma'} b\|+\sum_i\|\partial^{\gamma'}(l_{bi}+h_{bi})\|+\|\partial^{\gamma'}(l_{c}+h_{c})\|\Big)
  \sum_{0<|\gamma'|\leq N-1}\big(\|\nabla \partial^{\gamma'} a\|+\|\nabla \partial^{\gamma'} c\|\big).
\end{align}
It immediately follows from \eqref{spatial-a-sum} that (with $K\gg 1$)
\begin{align*}
  \sum_{0<|\gamma|\leq N-1}(\|\nabla \partial^\gamma a\|+\|\nabla \partial^\gamma c\|)
  \leq\,& C \sum_{0<|\gamma'|\leq N-1}\Big(\|\partial^0 \partial^{\gamma'} b\|+\sum_i\|\partial^{\gamma'}(l_{bi}+h_{bi})\|+\|\partial^{\gamma'}(l_{c}+h_{c})\|\Big)\\
  \leq\, & C \sum_{|\gamma|\leq N}\|\{\mathbf{I}-\mathbf{P}\}\partial^\gamma f(t)\| +C\sqrt{M_0}\sum_{|\gamma|\leq N}\|\partial^\gamma f(t)\|.
\end{align*}

If $\gamma'=0$, the deduction becomes simpler. In this case, \eqref{spatial-a} is exactly as
\begin{align}
  &\int_{\mathbb{T}^3}|\nabla a|^2+K |\nabla  c|^2+(\nabla  a\cdot \nabla c)+(\nabla \cdot E)^2+|\nabla \phi|^2 e^\phi dx\nonumber\\
  &\, \quad \leq C\bigg(\|\partial^0  b\|+\sum_i\|l_{bi}+h_{bi}\|+\|l_{c}+h_{c}\|\bigg)
  \left(\|\nabla  a\|+\|\nabla  c\|\right),
\end{align}
which implies that
\begin{align*}
  \|\nabla  a\|+\|\nabla  c\|\leq C \sum_{|\gamma|\leq N}\|\{\mathbf{I}-\mathbf{P}\}\partial^\gamma f(t)\| +C\sqrt{M_0}\sum_{|\gamma|\leq N}\|\partial^\gamma f(t)\|,
\end{align*}
and hence
\begin{align*}
  \sum_{0<|\gamma|\leq N}\|\partial^\gamma  a\|\leq C \sum_{|\gamma|\leq N}\|\{\mathbf{I}-\mathbf{P}\}\partial^\gamma f(t)\| +C\sqrt{M_0}\sum_{|\gamma|\leq N}\|\partial^\gamma f(t)\|.
\end{align*}
Furthermore,
by using Poincar\'{e}'s inequality, Lemma \ref{lemma-ac} and Proposition \ref{UH2}, we get
\begin{align*}
  \|a\|\leq C\bigg\{\|\nabla a\|+\bigg|\int_{\mathbb{T}^3} a dx\bigg|\bigg\}
  \leq   C \{\|\nabla a\|
  +\|\phi\|^2+\|\nabla_x \phi\|^2\}
  \leq   C \{\|\nabla a\|+\|f\|^2\}.
\end{align*}
In summary, $\sum_{|\gamma|\leq N}\|\partial^\gamma a\|$ is controlled by the right-hand side of \eqref{abc}.

\smallskip
\emph{Step 4. The estimate of $\partial^\gamma b(t,x)$, $\gamma=[\gamma_0,0,0,0]$, $|\gamma|\leq N$.}
By Lemma \ref{lemma-ac}, we have $\int_{\mathbb{T}^3} b dx=0$. It thus follows that $\int_{\mathbb{T}^3}\partial^\gamma b dx=0$. If $|\gamma|\leq N-1$, we use Poincar\'{e}'s inequality to get
\begin{align*}
 \|\partial^\gamma b\|\leq C \|\nabla \partial^\gamma b\|\leq C \sum_{|\gamma|\leq N}\|\{\mathbf{I}-\mathbf{P}\}\partial^\gamma f(t)\| +C\sqrt{M_0}\sum_{|\gamma|\leq N}\|\partial^\gamma f(t)\|.
\end{align*}

If $|\gamma|=N$, we take $\partial^{\gamma'}$ derivative $(|\gamma'|=N-1)$ on the identity \eqref{lbi} to get
\begin{align*}
  \partial^0 \partial^{\gamma'}b_i +\partial^i \partial^{\gamma'}a-\partial^{\gamma'}E_i=\partial^{\gamma'} (l_{bi}+h_{bi}).
\end{align*}
Thus,
\begin{align*}
  \sum_{|\gamma|=N}\|\partial^\gamma b\|\leq \sum_{|\gamma|=N}\|\partial^\gamma a\|+\sum_{|\gamma'|=N-1}\|\partial^{\gamma'}E\|+\sum_{|\gamma'|=N-1}\sum_i \|\partial^{\gamma'} (l_{bi}+h_{bi})\|.
\end{align*}
Recalling Proposition \ref{elliprop}, we know that
\begin{align*}
  \sum_{|\gamma'|=N-1}\|\partial^{\gamma'}E\|
  \leq \sum_{|\gamma|\leq N}\|\partial^{\gamma}\phi\|
  \leq C \sum_{|\gamma|\leq N}\left\|\partial^{\gamma}\int_{\mathbb{R}^3}f \sqrt{\mu} dv\right\|
  \leq  C\sum_{|\gamma|\leq N}\{\| \partial^{\gamma}a\|+\|\partial^{\gamma} c\|\}.
\end{align*}
Therefore, we obtain that
\begin{align*}
  \sum_{|\gamma|=N}\|\partial^\gamma b\|
  \leq\, & C\sum_{|\gamma|\leq N}\{\| \partial^{\gamma}a\|+\|\partial^{\gamma} c\|\}
  +\sum_{|\gamma'|=N-1}\sum_i \|\partial^{\gamma'} (l_{bi}+h_{bi})\|\\
  \leq\, &C \sum_{|\gamma|\leq N}\|\{\mathbf{I}-\mathbf{P}\}\partial^\gamma f(t)\| +C\sqrt{M_0}\sum_{|\gamma|\leq N}\|\partial^\gamma f(t)\|.
\end{align*}
In summary, $\sum_{|\gamma|\leq N}\|\partial^\gamma b\|$ is controlled by the right-hand side of \eqref{abc}.

Combining the results of the above four steps, the crucial estimate \eqref{abc} is verified. Thus, the proof of Theorem \ref{coercivityThm} is complete.
\end{proof}

\section{Global existence and exponential decay of classical solutions to the system
 \eqref{perturb eqn}--\eqref{poisson}}\label{decay-section}

In this section, we prove the global existence and exponential decay of classical solutions to
the ionic Vlasov-Poisson-Boltzmann system \eqref{perturb eqn}--\eqref{poisson}. We first establish a refined energy estimate of classical  solutions.

\begin{lemma}
Suppose that $f(t,x,v)$ is the unique solution to the system \eqref{perturb eqn}--\eqref{poisson} constructed in Theorem \ref{LocalTheorem}  satisfying the conservation laws \eqref{mass}--\eqref{energy}.
Let $\sum_{|\gamma|\leq N}\|\partial^\gamma f(t)\|^2\leq M_0\ll 1$ be valid. Then for any given $0\leq m\leq N$, $|\beta|\leq m$, there exist constants $C_{|\beta|}>0$, $\delta_m>0$ and $C_m>0$ such that
\begin{align}\label{estimate-m}
 \sum_{|\beta|\leq m, |\gamma|+|\beta|\leq N}\left\{\frac{d}{dt}\Big\{C_{|\beta|}\|\partial^\gamma_\beta f\|^2+\|\partial^\gamma \nabla \phi\|^2  +\big\|\partial^\gamma \phi e^{\frac{\phi}{2}}\big\|^2 \Big\}(t)+\delta_m \|\partial^\gamma_\beta f\|^2_{\nu}(t)
 \right\}\leq C_m \vvvert f \vvvert \vvvert f \vvvert_{\nu}^2 (t).
\end{align}
\end{lemma}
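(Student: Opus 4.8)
The plan is to carry out a weighted nonlinear energy estimate in the spirit of Guo, with an induction on the order $m=|\beta|$ of velocity derivatives. \emph{Base case $m=0$.} Apply $\partial^\gamma$, $|\gamma|\le N$, to \eqref{perturb eqn}, take the $L^2(\mathbb T^3\times\mathbb R^3)$ inner product with $W^2\partial^\gamma f$, and sum over $|\gamma|\le N$. Using $-\{\partial_t+v\cdot\nabla_x+E\cdot\nabla_v\}W^2/(2W^2)=\Xi=q|v|^2/\langle v\rangle+qE\cdot(\cdots)$ together with $\|E\|_\infty\le C\vvvert f\vvvert\ll1$ and $W\sim1$ on $\mathbb T^3$, the transport part of the left-hand side gives $\tfrac12\tfrac{d}{dt}\|W\partial^\gamma f\|^2+\int\Xi|W\partial^\gamma f|^2$; absorbing the $\nu$-part of $\mathcal L$ into $\Xi$ turns the dissipation into $\int(\nu+\Xi)|W\partial^\gamma f|^2\sim\|\partial^\gamma f\|^2_{\langle v\rangle}$ with a coefficient $\sim q$, which (for $q$ large) swallows the $\mathcal K$-term $|(\mathcal K\partial^\gamma f,W^2\partial^\gamma f)|\le C\|\partial^\gamma f\|_\nu^2$ coming from Lemma~\ref{K}. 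Theorem~\ref{coercivityThm} and its key inequality \eqref{abc} are then used to estimate the macroscopic part $\|\mathbf P\partial^\gamma f\|_\nu$ and, decisively, to control $\|\partial^\gamma E\|$ — which depends on $f$ only through $\rho-1=\varrho_0 a+\varrho_2 c$ — by $C\sum_{|\gamma|\le N}\|\{\mathbf I-\mathbf P\}\partial^\gamma f\|_\nu+C\sqrt{M_0}\vvvert f\vvvert_\nu$.

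The only term that is not cubic in $f$ is the linear coupling $\partial^\gamma E\cdot v\sqrt\mu$, which I would turn into an exact time derivative of the Poisson--Poincar\'{e} energy. Testing \eqref{perturb eqn} against $\sqrt\mu$ and using $\nabla_v\sqrt\mu=-\tfrac v2\sqrt\mu$ together with $\sqrt\mu,v\sqrt\mu\in\mathscr N$, every coupling term cancels and one obtains the local conservation law $\partial_t(\varrho_0 a+\varrho_2 c)+c_*\,\nabla_x\cdot b=0$; differentiating the second equation of \eqref{poisson} in $t$ then yields $c_*\,\nabla_x\cdot b=\Delta\partial_t\phi-e^\phi\partial_t\phi$. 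Writing $W^2=1+(W^2-1)$, the leading piece $(\partial^\gamma E,c_*\partial^\gamma b)=(\partial^\gamma\phi,\partial^\gamma(\Delta\partial_t\phi-e^\phi\partial_t\phi))$ becomes, after integration by parts, $-\tfrac12\tfrac{d}{dt}\{\|\nabla\partial^\gamma\phi\|^2+\|e^{\phi/2}\partial^\gamma\phi\|^2\}$ plus cubic remainders (terms with $\partial_t\phi$ or commutators of $\partial^\gamma$ with $e^\phi$, all bounded by $\vvvert f\vvvert\vvvert f\vvvert^2_{\langle v\rangle}$ via $\|\partial^\gamma\phi\|\le C\|\partial^\gamma f\|$ from Proposition~\ref{elliprop} and $H^2(\mathbb T^3)\hookrightarrow L^\infty$). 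The $(W^2-1)$-correction separates into a $\mathbf P$-part controlled by $\|\partial^\gamma E\|(\|\partial^\gamma a\|+\|\partial^\gamma b\|+\|\partial^\gamma c\|)$ and a microscopic part controlled by $C\|\partial^\gamma E\|\,\|\{\mathbf I-\mathbf P\}\partial^\gamma f\|_\nu$; both are absorbed by the dissipation once \eqref{abc} is invoked and $M_0$ is small. This produces precisely the $\|\partial^\gamma\nabla\phi\|^2$ and $\|\partial^\gamma\phi e^{\phi/2}\|^2$ contributions in \eqref{estimate-m}. The remaining right-hand-side terms — $\partial^\gamma\Gamma(f,f)$, the field commutators $\sum_{\gamma_1\neq0}\partial^{\gamma_1}E\cdot\nabla_v\partial^{\gamma-\gamma_1}f$ and $\sum\partial^{\gamma_1}E\cdot(v/2)\partial^{\gamma-\gamma_1}f$ (which carry a $\langle v\rangle$ growth, motivating the $\langle v\rangle$-dissipation in $\mathcal E(t)$), and the $\partial^\gamma$-commutator of $\nu\partial^\gamma f$ — are each $\le C\vvvert f\vvvert\vvvert f\vvvert^2_{\langle v\rangle}$ by Lemmas~\ref{Gamma}, \ref{K}, \ref{LGamma}, Sobolev embedding, and $\vvvert E\vvvert\le C\vvvert f\vvvert$.

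\emph{Inductive step.} Assuming \eqref{estimate-m} for $m-1$, apply $\partial^\gamma_\beta$ with $|\beta|=m$ to \eqref{perturb eqn}, test against $C_{|\beta|}W^2\partial^\gamma_\beta f$, and sum. The transport/weight computation is unchanged, now with $\langle\partial_\beta[\nu\partial^\gamma f],\partial_\beta\partial^\gamma f\rangle$ handled by \eqref{nu-coercive}, leaving error terms $-\eta\sum_{|\beta_1|\le|\beta|}\|\partial^\gamma_{\beta_1}f\|^2$; together with the commutator $\sum_{\beta_1\neq0}C^{\beta_1}_\beta\partial_{\beta_1}v\cdot\nabla_x\partial^\gamma_{\beta-\beta_1}f$ these involve strictly fewer velocity derivatives and are therefore controlled by the $(m-1)$-level dissipation once the constants are chosen in a rapidly decreasing hierarchy $C_0\gg C_1\gg\cdots\gg C_m$, with $C_{|\beta|}$ small enough that $C_{|\beta|}$ times the bad terms is absorbed. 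The linear coupling $\partial^\gamma E\cdot\partial_\beta(v\sqrt\mu)$ now decays fast in $v$ with $|\gamma|\le N-m<N$, so it needs no exact-derivative trick: it is $\le C\|\partial^\gamma E\|\,\|\partial^\gamma_\beta f\|$ and is absorbed through lower-order microscopic dissipation via \eqref{abc}. The field commutators and $\partial_\beta\Gamma$ are bounded exactly as in the base case.

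I expect the main obstacle to be the base-case closure of the linear coupling term: the weight $W$ is indispensable for upgrading the $\nu$-dissipation to the $\langle v\rangle$-dissipation when $r<1$, yet because $W^2\neq1$ it spoils the clean "exact time derivative" identity for $(\partial^\gamma E\cdot v\sqrt\mu,\partial^\gamma f)$, and the resulting $(W^2-1)$-correction is \emph{not} small — it can only be tamed by feeding $\|\partial^\gamma E\|$ into the coercivity bound \eqref{abc} so that it is dominated by the microscopic part plus $\sqrt{M_0}$. A secondary difficulty is the soft-potential range $r\in(-3,0)$, where $\nu\sim\langle v\rangle^r$ degenerates at infinity: there one must exploit the $q$-enhanced growth of $\Xi\sim q|v|^2/\langle v\rangle$ on $\{|v|\gtrsim1\}$ together with the pointwise positive lower bound of $\nu$ on bounded velocities in order to recover the full $\|\cdot\|_{\langle v\rangle}$ dissipation.
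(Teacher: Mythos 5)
Your overall strategy is the one the paper uses: the same induction on the order of velocity derivatives with a decreasing hierarchy of constants $C_{|\beta|}$, the same weight $W$ and $\Xi$-enhanced dissipation $\nu+\Xi\sim\langle v\rangle$, the same conversion of the linear coupling $-(\partial^\gamma E\cdot v\sqrt\mu,\partial^\gamma f)$ into the exact derivative $\tfrac12\tfrac{d}{dt}\{\|\nabla\partial^\gamma\phi\|^2+\|e^{\phi/2}\partial^\gamma\phi\|^2\}$ plus cubic remainders via the local mass conservation law and the Poisson--Poincar\'e equation, the same use of Theorem \ref{coercivityThm} to recover full dissipation, and the same treatment of the $\beta$-commutators ($\partial_{\beta_1}v\cdot\nabla_x$, $\partial_{\beta_1}\nu$, $\partial_\beta\mathcal{K}$) by \eqref{nu-derivative}, \eqref{nu-coercive}, Remark \ref{remark} and absorption into the lower-order levels. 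The inductive step as you describe it is exactly the paper's (multiply the $|\beta|=m+1$ estimate by a small constant and add it to \eqref{estimate-m} for $|\beta|\le m$).

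Two steps in your base case do not close as stated. First, the claim that the $\mathcal{K}$-term is ``swallowed'' by taking $q$ large is not correct: $\Xi\sim q|v|^2/\langle v\rangle$ degenerates at $v=0$, and on the macroscopic part one has $\langle\mathcal{K}\mathbf{P}g,\mathbf{P}g\rangle=|\mathbf{P}g|_\nu^2$ exactly, so no choice of $q$ yields a net gain there; what actually closes this is Theorem \ref{coercivityThm}, which you do invoke, so this is a misstatement rather than a structural flaw (note, though, that the coercivity is stated for the unweighted pairing $(\mathcal{L}\partial^\gamma f,\partial^\gamma f)$, so the $W^2$-weighted pairing needs a comparison argument as well). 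Second, and more seriously, your absorption of the $(W^2-1)$-correction to the coupling term does not have the smallness it needs: \eqref{abc} gives $\|\partial^\gamma E\|\le C\sum\|\{\mathbf{I}-\mathbf{P}\}\partial^\gamma f\|_\nu+C\sqrt{M_0}\sum\|\partial^\gamma f\|_\nu$, and only the second piece carries a small factor; feeding this into your bound leaves a term of the form $C_q\big(\sum_{|\gamma|\le N}\|\{\mathbf{I}-\mathbf{P}\}\partial^\gamma f\|_\nu\big)^2$ with $C_q\sim\sup_{x,v}|W^2-1|$, and nothing forces $C_q$ to lie below the fixed coercivity constant $\delta_0$ (indeed $W^2-1$ is not small, and enlarging $q$ only makes it larger), so this quadratic term cannot be absorbed by the dissipation. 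The paper treats this same point differently: it does not attempt any absorption, but argues (using $W\sim1$) that the weighted coupling term $-\int W^2\partial^\gamma E\cdot v\sqrt\mu\,\partial^\gamma f$ is comparable to the unweighted one and therefore produces the same time-derivative structure $\frac{d}{dt}\big(C\|\nabla\partial^\gamma\phi\|^2+C\|e^{\phi/2}\partial^\gamma\phi\|^2\big)$ up to the cubic remainder $\int\tilde S e^\phi\,dx\le C\vvvert f\vvvert\vvvert f\vvvert_\nu^2$. So your identification of the main obstacle is exactly right, but the mechanism you propose to tame it (coercivity plus $\sqrt{M_0}$) is insufficient; you would need either the paper's comparison route for the full weighted term or some other source of smallness on the correction to complete the $|\beta|=0$ case.
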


\begin{proof}
First, we take $\partial^\gamma$ $(|\beta|=0)$ derivatives of \eqref{perturb eqn} to get
\begin{align}\label{partial-x}
  &\{\partial_t +v\cdot \nabla_x+E\cdot \nabla_v\}\partial^\gamma f-\partial^\gamma E \cdot v\sqrt{\mu}+\mathcal{L}\{\partial^\gamma f\}\nonumber\\
  &\quad \, =-\sum_{\gamma_1\neq 0}C_\gamma^{\gamma_1}\partial^{\gamma_1}E\cdot \nabla_v \partial^{\gamma-\gamma_1}f
  +\sum_{\gamma_1}C_\gamma^{\gamma_1}\Big\{\partial^{\gamma_1}E\cdot \frac{v}{2}\partial^{\gamma-\gamma_1}f+\Gamma(\partial^{\gamma_1}f, \partial^{\gamma-\gamma_1}f)\Big\}.
\end{align}
When taking inner product of the above equality with $\partial^\gamma f$, we investigate the term
$  -\int\!\!\int_{\mathbb{T}^3\times \mathbb{R}^3}\partial^\gamma E \cdot v\sqrt{\mu} \partial^\gamma f dvdx$
and rewrite it as
\begin{align*}
  -\int\!\!\int_{\mathbb{T}^3\times \mathbb{R}^3}\partial^\gamma E \cdot v\sqrt{\mu} \partial^\gamma f dvdx
  =\,&\int_{\mathbb{T}^3} \partial^\gamma \nabla\phi \cdot \partial^\gamma \left(\int_{\mathbb{R}^3}v \sqrt{\mu}f dv\right)dx\\
  =\,& -\int_{\mathbb{T}^3} \partial^\gamma\phi \partial^\gamma \nabla\cdot (\rho u)dx\\
  =\,& \int_{\mathbb{T}^3} \partial^\gamma\phi \partial^\gamma \partial_t \rho dx\\
  =\,& \int_{\mathbb{T}^3} \partial^\gamma\phi \partial^\gamma (\partial_t \phi e^\phi-\partial_t \Delta \phi) dx\\
  =\,& \frac{1}{2}\frac{d}{dt}\left( \int_{\mathbb{T}^3} |\nabla \partial^\gamma \phi|^2 dx\right)+\int_{\mathbb{T}^3} \partial^\gamma\phi \partial^\gamma (\partial_t \phi e^\phi) dx\\
  =\,& \frac{1}{2}\frac{d}{dt}\left( \int_{\mathbb{T}^3} |\nabla \partial^\gamma \phi|^2 dx\right)+\frac{1}{2}\frac{d}{dt}\left( \int_{\mathbb{T}^3} | \partial^\gamma \phi|^2  e^\phi dx\right)
  +\int_{\mathbb{T}^3}\mathcal{W}e^\phi dx,
\end{align*}
where $\mathcal{W}$ is a finite sum: each element herein is a product of at least three terms of $\partial^{\tilde{\gamma}}\phi$, $0<|\tilde{\gamma}|\leq|\gamma|$. Recalling Proposition \ref{elliprop}, we thus have
\begin{align*}
  \int_{\mathbb{T}^3}\mathcal{W}e^\phi dx \leq C \vvvert f \vvvert \vvvert f \vvvert_\nu^2.
\end{align*}
Multiplying \eqref{partial-x} by $\partial^\gamma f$, by using Theorem \ref{coercivityThm} and the similar procedures as (iic) and (iid) in Section \ref{local-section}, we have
\begin{align*}
  \frac{d}{dt}\sum_{|\gamma|\leq N}\Big(\frac{1}{2}\|\partial^\gamma f\|^2+\|\partial^\gamma \nabla \phi\|^2  +\|\partial^\gamma \phi e^{\frac{\phi}{2}}\|^2   \Big)(t)+\delta_0 \sum_{|\gamma|\leq N}\|\partial^\gamma f\|^2_\nu(t)
  \leq C \sum_{|\gamma|\leq N}\vvvert f \vvvert \vvvert f \vvvert_{\nu}^2,
\end{align*}
which is \eqref{estimate-m} for $|\beta|=0$ case.

Second, assume that \eqref{estimate-m} is valid for $|\beta|\leq m$. We shall verify \eqref{estimate-m}
holds for the case $|\beta|=m+1$.
Take $\partial_\beta^\gamma$ derivatives of \eqref{perturb eqn}, it holds
\begin{align}
   \{\partial_t +v\cdot \nabla_x+E\cdot \nabla_v+\nu\}\partial^\gamma_\beta f
= \,& \partial^\gamma E \cdot \partial_\beta (v\sqrt{\mu})-\sum_{\beta_1\neq 0}C_\beta^{\beta_1}
  \{\partial_{\beta_1}v\cdot \nabla_x+\partial_{\beta_1} \nu\}\partial_{\beta-\beta_1}^\gamma f\nonumber\\
  &  + \sum C_\beta^{\beta_1} C_\gamma^{\gamma_1} \partial^{\gamma_1}E \cdot \partial_{\beta_1}\left(\frac{v}{2}\right)\partial_{\beta-\beta_1}^{\gamma-\gamma_1} f-\sum_{\gamma_1\neq 0}C_\gamma^{\gamma_1} \partial^{\gamma_1}E \cdot \nabla_v \partial^{\gamma-\gamma_1}_\beta f\nonumber\\
  &  + \partial_\beta [\mathcal{K}\partial^\gamma f]+\sum C_\gamma^{\gamma_1} \partial_\beta \Gamma (\partial^{\gamma_1}f, \partial^{\gamma-\gamma_1}f).\label{f-eqn}
\end{align}
We consider the inner product of \eqref{f-eqn} with $ \partial_\beta^\gamma f$. By direct calculations, the contribution of the term $\partial^\gamma E \cdot \partial_\beta (v\sqrt{\mu})$ in \eqref{f-eqn} is
\begin{align*}
  ( \partial^\gamma E \cdot \partial_\beta (v\sqrt{\mu}) , \partial_\beta^\gamma f)
  = (-1)^{|\beta|} ( \partial^\gamma E \cdot \partial^2_\beta (v\sqrt{\mu}) , \partial^\gamma f)
  \leq    C\|\partial^\gamma E\|\|\partial^\gamma f\|\leq C \sum_{|\gamma|\leq N}\|\partial^\gamma f\|^2.
\end{align*}
By \eqref{nu-derivative} in Lemma \ref{K} and the compact interpolation for $\partial_v^{\beta-\beta_1}$ between $\partial_v^\gamma$ $(|\gamma|=|\beta|)$ and $\partial_v^0$, the contribution of the term $\sum_{\beta_1\neq 0}C_\beta^{\beta_1}
  \{\partial_{\beta_1}v\cdot \nabla_x+\partial_{\beta_1} \nu\}\partial_{\beta-\beta_1}^\gamma f$ in \eqref{f-eqn} is
  $$\eta \|\partial_\beta^\gamma f\|^2+C_\eta \sum_{|\beta_1|=1}\|\nabla_x \partial^\gamma_{\beta-\beta_1}f\|^2.$$
By Lemma \ref{K} again, the contribution of the term $\partial_\beta [\mathcal{K}\partial^\gamma f]$ in \eqref{f-eqn} is
  $$\eta \sum_{|\beta'|=|\beta|}\|\partial_{\beta'}^\gamma f\|^2+C_\eta \|\partial^\gamma f\|^2.$$
By  simple modifications of the arguments in Section \ref{local-section}, the contribution of other terms on the right-hand side of \eqref{f-eqn} is $$C \vvvert f \vvvert \vvvert f \vvvert_{\nu}^2.$$
In summary, we deduce that
\begin{align}\label{m+1'}
   \frac{1}{2}\frac{d}{dt}\|\partial_\beta^\gamma f\|^2(t)+\|\partial_\beta^\gamma f\|_{\nu}^2(t)
  \leq \,& C\|\partial^\gamma E\|\|\partial^\gamma f\|+\eta \|\partial_\beta^\gamma f\|^2+C_\eta \sum_{|\beta_1|=1}\|\nabla_x \partial^\gamma_{\beta-\beta_1}f\|^2\nonumber\\
  &+ \eta \sum_{|\beta'|=|\beta|}\|\partial_{\beta'}^\gamma f\|^2+C_\eta \|\partial^\gamma f\|^2
   +C \vvvert f \vvvert \vvvert f \vvvert_{\nu}^2
\end{align}
for some sufficiently small constant $\eta>0$.

Summing \eqref{m+1'} over $|\beta|=m+1$ and $|\gamma|+|\beta|\leq N$ yields
\begin{align}\label{m+1}
 & \sum_{\substack{|\beta|= m+1\\ |\gamma|+|\beta|\leq N}}\left\{\frac{d}{dt}\|\partial_\beta^\gamma f\|^2(t)+\|\partial_\beta^\gamma f\|_{\nu}^2(t)\right\}
  \leq C_\eta \sum_{\substack{|\beta|= m\\ |\gamma|+|\beta|\leq N}}\| \partial^\gamma_{\beta}f\|^2+C_\eta \sum_{|\gamma|\leq N}\|\partial^\gamma f\|^2+C \vvvert f \vvvert \vvvert f \vvvert_{\nu}^2.
\end{align}
Multiplying \eqref{m+1} by a sufficiently small number and add it to \eqref{estimate-m} for $|\beta|\leq m$,
  we obtain that
\begin{align}\label{estimate-m+1}
 \sum_{\substack{|\beta|\leq m+1\\ |\gamma|+|\beta|\leq N}}\left\{\frac{d}{dt}\Big\{C_{|\beta|}\|\partial^\gamma_\beta f\|^2+\|\partial^\gamma \nabla \phi\|^2  +\big\|\partial^\gamma \phi e^{\frac{\phi}{2}}\big\|^2 \Big\}(t)+\delta_{m+1} \|\partial^\gamma_\beta f\|^2_{\nu}(t)
 \right\}\leq C_{m+1} \vvvert f \vvvert (t)\vvvert f \vvvert_{\nu}^2 (t),
\end{align}
which inductively concludes the proof of the lemma.
\end{proof}

\begin{proof}[Proof of Theorem \ref{Thm1} \emph{(}continued\emph{)}]

With \eqref{estimate-m} in hand, we now prove the global existence and exponential decay of the solution to \eqref{perturb eqn}--\eqref{poisson} with a bootstrap argument. Fix $M_0\ll 1$ such that Theorems \ref{LocalTheorem} and \ref{coercivityThm} hold. In the estimate \eqref{estimate-m}, we choose $m=N$, and then define
\begin{align*}
  y(t):=\, &\sum_{|\gamma|+|\beta|\leq N}\left\{C_{|\beta|}\|\partial_\beta^\gamma f\|^2+\|\partial^\gamma \nabla\phi\|^2+\|\partial^\gamma \phi e^{\phi/2}\|^2\right\}(t)\\
  \sim\, & \sum_{|\gamma|+|\beta|\leq N}\left\{C_{|\beta|}\|\partial_\beta^\gamma f\|^2+\|\partial^\gamma \nabla\phi\|^2+\|\partial^\gamma \phi \|^2\right\}(t)\\
  \sim\, & \sum_{|\gamma|+|\beta|\leq N} \|\partial_\beta^\gamma f\|^2(t)\\
  =\,& \vvvert f \vvvert^2(t),
\end{align*}
where the third line is obtained from the elliptic estimates in Proposition \ref{elliprop}.
Then \eqref{estimate-m} is equivalent to
\begin{align*}
  y'(t)+\delta_N \vvvert f \vvvert^2_{\nu}(t)\leq  C_{N} \vvvert f \vvvert \vvvert f \vvvert_{\nu}^2 (t)
  \leq   C\sqrt{M_0}\vvvert f \vvvert_{\nu}^2(t),
\end{align*}
which implies that
\begin{align}\label{y'(t)}
  y'(t)+\frac{\delta_N}{2} \vvvert f \vvvert^2_{\nu}(t)\leq 0.
\end{align}
It follows that
\begin{align}\label{y(t)}
  y(t)+\frac{\delta_N}{2}\int_0^t  \vvvert f \vvvert^2_{\nu} (s)ds\leq y(0)\leq C \vvvert  f_0\vvvert   ^2.
\end{align}
Recall the energy functional defined in \eqref{Efunctional}, i.e., $\mathcal{E}(t)= \vvvert f \vvvert^2(t)+\int_0^t \vvvert f \vvvert^2_{\nu}(s)ds$. Then \eqref{y(t)} indicates that
\begin{align*}
  \mathcal{E}(t)\leq C\vvvert  f_0\vvvert   ^2\leq C_* \mathcal{E}(0),
\end{align*}
for some $C_*>0$. For a constant $M\leq  {M_0}/({2C_*})$, we choose the initial datum $\mathcal{E}(0)\leq M$ and define
\begin{align*}
  T^*:= \sup_t \{t: \,\mathcal{E}(t)\leq M_0\}>0,
\end{align*}
then we have
\begin{align*}
  \sup_{0\leq t\leq T^*}\mathcal{E}(t)\leq C_* M<2C_* M=M_0,
\end{align*}
which contradicts the definition of $T^*$. Hence the maximal existence time $T^*$ can be extended to the infinity.

Moreover, by the fact that $y(t)\sim \vvvert f \vvvert^2(t)$, \eqref{y'(t)} indicates that
\begin{align*}
  y'(t)+Cy(t)\leq 0,
\end{align*}
for some $C>0$, and thus
\begin{align*}
  y(t)\leq e^{-Ct}y(0).
\end{align*}
Therefore, it holds
\begin{align*}
  \vvvert f \vvvert^2(t)\leq C e^{-Ct}\vvvert  f_0\vvvert^2.
\end{align*}
Hence, the whole proof of Theorem \ref{Thm1} is complete.
\end{proof}

%\hfill $\square$

%\smallskip\medskip
%{\bf Acknowledgements}:

\vspace{2mm}
		\noindent
	 \noindent \textbf{Acknowledgement.}
Li is  supported in part by  NSFC (Grant Nos. 12331007, 12071212)  and   the ``333 Project" of Jiangsu Province.
Wang is supported in part by NSFC (Grant No. 12401288)
and
the Jiangsu Provincial Research
Foundation for Basic Research (Grant No. BK20241259).

\vspace{2mm}
		\noindent
	 \noindent \textbf{Conflict of interest.} The authors do not have any possible conflicts of interest.

	\vspace{2mm}

	\noindent \textbf{Data availability statement.}
	 Data sharing is not applicable to this article, as no data sets were generated or analyzed during the current study.
	
%and the Jiangsu Provincial Scientific Research Center of Applied Mathematics (Grant No. BK20233002).
%{\bf and the Start-up Research Fund of Southeast University.}

%The work of the authors was supported by a National Natural Science Foundation of
% China (Grant No. ).

%{\bf Data Available Statement}: The data that support the findings of this study are available in public repository.

\end{document}